
\documentclass[letterpaper,11 pt]{article}
\usepackage{algorithmic,algorithm}
\usepackage{amsthm}
\usepackage {amssymb}
\usepackage{graphicx,caption,tabularx,multicol,multirow,color}
\usepackage {amsmath}
\usepackage{natbib}
\usepackage{rotating}
\usepackage{bbm}
\usepackage{xcolor}
\usepackage{slashbox}
\usepackage{pifont}
\usepackage{caption}
\usepackage{tabularx}
\usepackage{lipsum}
 \bibpunct[, ]{(}{)}{,}{a}{}{,}%
\newtheorem{theorem}{Theorem}
\newtheorem{lemma}{Lemma}
\newtheorem{assumption}{Assumption}
\newtheorem{defn}{Definition}
\newtheorem{thm}{Theorem}
\newtheorem{col}{Corollary}
\newtheorem{pre}{Proposition}
\newtheorem{rem}{Remark}
\usepackage{mathrsfs}
\usepackage[bottom=1.00in, left=1.00in, right=1.00in]{geometry}
\usepackage{graphicx}
\usepackage{hyperref}
\usepackage{array}
\usepackage{float}
\usepackage{comment}
\usepackage{setspace}
\newcommand{\tr}[1]{\textnormal{tr}\!\left(#1\right)}
\newcommand{\EX}{\mathbb{E}} 
\newcommand{\nm}[1]{{\color{black}#1}}
\newcommand{\mn}[1]{{\color{black}#1}}
\newcommand{\ks}[1]{{\color{black}#1}}
\newcommand{\cmark}{\ding{51}}%
\newcommand{\xmark}{\ding{55}}%
\allowdisplaybreaks
\begin{document}
\onehalfspacing
\title{First-order Methods with Convergence Rates for Multi-agent Systems on Semidefinite Matrix Spaces}
\author{Nahidsadat Majlesinasab\thanks{School of Industrial Engineering \& Management, Oklahoma State University, Stillwater, OK 74074, USA,  \texttt{nahid.majlesinasab@okstate.edu};}, Farzad Yousefian\thanks{School of Industrial Engineering \& Management, Oklahoma State University, Stillwater, OK 74074, USA,  \texttt{farzad.yousefian@okstate.edu};}, Mohammad Javad Feizollahi\thanks{J. Mack Robinson College of Business, Georgia State University, Atlanta, GA 30303, USA, \texttt{mfeizollahi@gsu.edu}.}}
\setlength{\textheight}{9.2in} \setlength{\topmargin}{0in}\setlength{\headheight}{0in}\setlength{\headsep}{0in}

\maketitle
\thispagestyle{empty}
\begin{abstract} 
The goal in this paper is to develop first-order methods equipped with convergence rates for multi-agent optimization problems on semidefinite matrix spaces. These problems include cooperative optimization problems and non-cooperative Nash games. Accordingly, first we consider a multi-agent system where the agents cooperatively minimize the summation of their local convex objectives, and second, we consider Cartesian stochastic variational inequality (CSVI) problems with monotone mappings for addressing stochastic Nash games on semidefinite matrix spaces. Despite the recent advancements in first-order methods addressing problems over vector spaces, there seems to be a major gap in the theory of the first-order methods for optimization problems and equilibriums on semidefinite matrix spaces. In particular, to the best of our knowledge, there exists no method with provable convergence rate for solving the two classes of problems under mild assumptions. Most existing methods either rely on strong assumptions, or require a two-loop framework where at each iteration, a projection problem, i.e., a semidefinite optimization problem, needs to be solved. 
Motivated by this gap, in the first part of the paper, we develop a mirror descent incremental subgradient method for minimizing a finite-sum function. We show that the iterates generated by the algorithm converge asymptotically to an optimal solution and derive a non-asymptotic convergence rate. In the second part, we consider semidefinite CSVI problems. We develop a stochastic mirror descent method that only requires monotonicity of the mapping. We show that the iterates generated by the algorithm converge to a solution of the CSVI almost surely. Using a suitably defined gap function, we derive a convergence rate statement. This work appears to be the first that provides a convergence speed guarantee for monotone CSVIs on semidefinite matrix spaces. Our numerical experiments performed on a multiple-input multiple-output multi-cell cellular wireless network support the convergence of the developed method.\footnote{A preliminary version of the second part of this work has been accepted for publication in Proceedings of the 2019 American Control Conference (cf. \cite{majlesinasab2018first}).}
\end{abstract}
\section{Introduction}\label{sec_intro}
This paper addresses multi-agent problems over semidefinite matrix spaces including cooperative multi-agent problems and non-cooperative Nash games. First, we consider cooperative multi-agent problems. Decentralized optimization problems have a wide range of applications arising in data mining and machine learning (\cite{nedic2017distributed}), wireless sensor networks (\cite{durham2012distributed}), control (\cite{ram2009distributed}), and other areas in science and engineering (\cite{xiao2006optimal}) where decentralized processing of information is crucial for security purposes or for real-time decision making. In this paper, we consider the following multi-agent finite-sum optimization problem that involves a network of multiple agents who cooperatively optimize a global objective,
\begin{align}
	\label{eqn:problem10} 
	\underset{X \in \mathcal B}{\text{minimize}\ } & \sum_{i=1}^m f_i( X)
\end{align}
where $\mathcal B\triangleq\{X \in \mathbb{S}_{ n}:  X\succeq 0$ and $\tr{X}=1\}$, and $f_i:\mathcal B \to \mathbb R$ is a convex function. Note that each agent $i$ is associated with the local objective $f_i(X)$ and all agents cooperatively minimize the network objective $\sum_{i=1}^m f_i( X)$. In decentralized optimization, the agents (players) need to communicate with their adjacent agents to spread the distributed information over the network and \ks{reach a consensus}. \par
In the past two decades, there has been much interest in the development of models and distributed algorithms for multi-agent optimization problems. In particular, incremental gradient/subgradient methods and their accelerated aggregated variants (\cite{nedic2009distributed,lobel2011distributed,shi2015extra,gurbuzbalaban2017convergence}) have been studied where a local gradient/subgradient is evaluated at each step of an iteration. Although each step is inexpensive, these methods usually require a large number of iterations to converge. Each iteration in decentralized optimization requires visiting all agents one by one which may cause a significant delay before a transfer of data begins. In this line of research, distributed proximal gradient methods (\cite{bertsekas2011incremental, bertsekas2015incremental}), and alternating direction method of multipliers (ADMM) (\cite{chang2015multi,makhdoumi2017convergence}) were developed and studied extensively as well. These methods have also been extended to applications where the network has a time-varying topology and/or there is a need to asynchronous implementations (\cite{nedic2011asynchronous,nedic2015distributed}). Multi-agent mirror descent method for decentralized optimization was proposed by (\cite{xi2014distributed}) where a local Bregman divergence at each agent is employed, and an asymptotic convergence result is provided. More recently, \cite{boct2018incremental} proposed an incremental mirror descent method with a stochastic sweeping of the component functions. 
While incremental gradient/subgradient methods and their accelerated aggregated variants are extensively studied in vector spaces, their performance and convergence analysis in matrix spaces have not been studied yet. 

The sparse covariance estimation is a specific application of finite-sum problem which sets a certain number of coefficients in the inverse covariance to zero to improve the stability of covariance matrix estimation.
\cite{lu2010adaptive} developed two first-order methods including the adaptive spectral projected gradient and the adaptive Nesterov's smooth methods to solve the large scale covariance estimation problem. \cite{hsieh2013big} proposed a block coordinate descent (BCD) method with a superlinear convergence rate.
In conic programming, first-order methods are equipped with duality or penalty strategies (\cite{lan2011primal,necoara2017complexity}) to tackle complicated constraints. A major limitation to the aforementioned methods in addressing Problem \eqref{eqn:problem10} is that either they require a projection step that is computationally costly in the semidefinite space, or they employ Lagrangian relaxation techniques slowing down the convergence speed of the underlying first-order method. Accordingly, in the first part of the paper, we address this gap by developing a matrix mirror descent incremental subgradient (M-MDIS) method to solve finite-sum Problem \eqref{eqn:problem10} where we choose the distance generating function to be defined as the quantum entropy following \cite{tsuda2005matrix}. M-MDIS is a first-order method in the sense that it only requires a gradient-type of update at each iteration. This method is a single-loop algorithm meaning that it provides a closed-form solution for the projected point and hence it does not need to solve a projection problem at each iteration. We prove that M-MDIS method converges to the optimal solution of \eqref{eqn:problem10} asymptotically and derive a non-asymptotic convergence rate of $\mathcal O(1/\sqrt{t})$.

In the second part of the paper, we consider non-cooperative multi-agent systems. In addressing such problems, variational inequalities (VIs) were first introduced in the 1960s. VIs have a wide range of applications arising in engineering, finance, physics and economics (cf. \cite{facchinei2007finite}).
They can be used for formulating various equilibrium problems and analyzing them from the viewpoint of existence and uniqueness of solutions and stability. Particularly, in mathematical programming, VIs address problems such as optimization problems, complementarity problems and systems of nonlinear equations, to name a few 
(\cite{scutari2010convex}). 
\mn{Given a set ${\mathcal X}$ and a mapping $F:\mathbf{\mathcal X} \to \mathbb{R}^{n\times n}$, a VI problem denoted by VI$(\mathbf{\mathcal X}, F)$ seeks a matrix $X^* \in \mathbf{\mathcal X}$ such that $\tr{(  X-  X^*)^TF(  X^*)} \geq 0, \quad \text{for all} ~  X \in {\mathcal X}.$
In addressing non-cooperative Nash games, we consider Cartesian stochastic variational inequality (CSVI) problems where the set $\mathcal X$ is a Cartesian product of some component sets $\mathcal X_i$, i.e., 
\begin{align}
\label{eq:setx}
{\mathcal X}\triangleq\{X \in \mathbb{S}_{n}| X=\text{diag} (X_1,\ldots,X_N),~ X_i \in \mathcal X_i\},  \notag\\
 \text{where}~ \mathcal X_i\triangleq\{X_i \in \mathbb S^+_{n_i}| \tr{X_i}=1\} \quad \text{for all} \quad i=1,\ldots,N.
\end{align}
Hence, we seek a matrix $X^*=\text{diag} (X^*_1,\ldots,X^*_N)$ that solves the following inequality for all $i=1,\ldots,N$:
\begin{align}
\label{eq:VI2}
\tr{(  X_i-  X_i^*)^TF_i(  X^*)} \geq 0, \quad \text{for all} ~  X_i \in {\mathcal X_i}.
\end{align}
In particular, we study VI($\mathbf{\mathcal X}, F$) where $F_i(X)=\EX[\Phi_i(  X,\xi_i(w))]$, i.e., the mapping $F_i$ is the expected value of a stochastic mapping $\Phi_i:{\mathcal X}\times \mathbb{R}^{d_i} \to \mathbb{S}_{n}$ where the vector $\xi_i:\Omega \to \mathbb{R}^{d_i}$ is a random vector associated with a probability space represented by $(\Omega, \mathcal{F},\mathbb{P})$. Here, $\Omega$ denotes the sample space, $\mathcal{F}$ denotes a $\sigma$-algebra on $\Omega$, and $\mathbb{P}$ is the associated \nm{probability measure.} Therefore, $X^* \in \mathbf{\mathcal X}$ solves VI($\mathbf{\mathcal X}, F$) if for all $i=1,\ldots,N$,
\begin{align}
\label{eq:VI}
	\tr{(  X_i-  X_i^*)^T\EX[\Phi_i(  X^*,\xi(w))]} \geq 0,~\text{for all}~X_i \in {\mathcal X_i}.
\end{align}
Throughout, we assume that $\EX[\Phi_i(  X^*,\xi_i(w))]$ is well-defined (i.e., the expectation is finite).} 
There are several challenges in solving CSVIs on semidefinite matrix spaces including presence of uncertainty, the semidefinite solution space and the Cartesian product structure. In what follows, we review some of the methods that address these challenges, and explain their limitations. 

Stochastic Approximation (SA) schemes (\cite{robbins1951stochastic}) and their prox generalization (\cite{nemirovski2009robust,majlesinasab2017optimal}) shown to be very successful in solving optimization and VI problems (\cite{jiang2008stochastic}) with uncertainties. 
%
Averaging techniques first introduced \nm{by} \cite{Polyak92} proved successful in increasing the robustness of the SA method.
Applying SA schemes to solve semidefinite optimization problems result in a two-loop framework and require projection onto a semidefinite cone at each iteration which increases the computational complexity. 

Solving optimization problems with positive semidefinite variables is more challenging than solving problems in vector spaces because of the structure of problem constraints. Matrix exponential learning (MEL) which has strong ties to mirror descent methods is an optimization algorithm applied to positive semidefinite nonlinear problems. The distance generating function applied in MEL is the quantum entropy.
\cite{mertikopoulos2012matrix} proposed an MEL based approach to solve the power allocation problem in multiple-input multiple-output (MIMO) multiple access channels.
The convergence of MEL and its robustness w.r.t. uncertainties are investigated by \cite{mertikopoulos2016learning}.
Although in the above studies, the problem can be formulated as an optimization problem, some practical cases such as multi-user MIMO maximization problem discussed in Section \ref{sec:motiv} cannot be treated as an optimization problem. Hence, \cite{mertikopoulos2017distributed} proposed an MEL based algorithm to solve $N$-player games under uncertain feedback and proved that it converges to a stable Nash equilibrium assuming that the mapping is strongly stable. However, in most applications including the game \eqref{eq:Rgame} this assumption is not met.
In the VI regime, the focus has been more on addressing stochastic VIs (SVIs) on vector spaces. In particular, CSVIs on matrix spaces which have applications in wireless networks and image retrieval (cf. Section \ref{sec:motiv}) have not been studied yet. 
In addressing these limitations, we consider CSVIs on matrix spaces where the mapping is merely monotone. We develop an averaging matrix stochastic mirror descent (A-M-SMD) method to solve CSVI \eqref{eq:VI}. A-M-SMD is a first-order single-loop algorithm. To drive rate statements and to improve its robustness w.r.t. uncertainties, we employ averaging techniques. In the second part of the paper, we improve the MEL method of \cite{mertikopoulos2017distributed} in the sense that we require an applicable assumption on the mapping since strong stability of the mapping either does not hold in applications, or it is hard to be verified. The originality of this work lies in the convergence and rate analysis under the monotonicity assumption. We establish convergence to a weak solution of the CSVI by introducing an auxiliary sequence. Then, we derive a convergence rate of $\mathcal O(1/\sqrt{t})$ in terms of the expected value of a suitably defined gap function. Our work is amongst the first ones that provide a convergence rate for CSVI on semidefinite matrix spaces. In Table \ref{tbl:comp}, the distinctions between the existing methods and our work are summarized. We apply the A-M-SMD method on a throughput maximization problem in wireless multi-user MIMO networks. Our results show that the A-M-SMD scheme has a robust performance w.r.t. uncertainty and problem parameters and outperforms both non-averaging M-SMD and MEL 
methods.
\begin{table*}[ht]
\scriptsize
\caption{Comparison of first-order schemes}
\label{tbl:comp}
	\centering
\begin{tabular}{|c|c|c|c|c|c|c|c|}
	\hline
	 Reference & Problem  & Assumptions & Space & Scheme & 1-loop & Rate \\ 
	\hline
			\cite{lan2011primal} & Opt & C,S/NS & Matrix & Primal-dual Nesterov's methods& \xmark & ${\cal O} \left({1}/{t}\right)$  \\
	\hline 	
	\cite{hsieh2013big} & Opt & NS,C & Matrix & BCD & \xmark & superlinear \\
	\hline
\cite{bertsekas2015incremental} & finite-sum & C,S & Vector & Incremental Aggregated Proximal & \xmark & Linear\\
	\hline 
	\cite{gurbuzbalaban2017convergence} & finite-sum & C,S & Vector & Incremental Aggregated Gradient & \xmark & Linear\\
		\hline 
	\cite{boct2018incremental} & finite-sum & C,NS & Vector & Incremental SMD & \xmark & ${\cal O} \left({1}/{\sqrt{t}}\right)$ \\
	\hline 
	\textbf{Our work} & finite-sum & MM, NS & Matrix & M-MDIS & \cmark & ${\cal O} \left({1}/{\sqrt{t}}\right)$ \\
	\hline 
	\cite{jiang2008stochastic} & SVI  & SM,S & Vector & SA & \xmark & $-$ \\
	\hline 
	\cite{juditsky2011solving} & SVI  & PM,S/NS & Vector & Extragradient SMP  & \xmark & ${\cal O} \left({1}/{t}\right)$ \\
	\hline 
		\cite{mertikopoulos2012matrix} & SOpt & C,S  & Matrix & Exponential Learning & \cmark & $e^{-\alpha t}\nm{(\alpha>0)}$ \\
	\hline  
	\cite{koshal2013regularized} & SVI & MM,S & Vector & Regularized Iterative SA & \xmark & $-$ \\
	\hline  
	\cite{yousefian2017smoothing}  & SVI & MM,NS & Vector & Regularized Smooth SA & \xmark & ${\cal O} \left({1}/{\sqrt{t}}\right)$ \\
	\hline 
	\cite{mertikopoulos2017distributed} & SVI & SL,S & Matrix & Exponential Learning & \cmark &${\cal O} \left({1}/{\lambda t}\right)$   \\
	\hline 
	\cite{yousefian2016stochastic} & CSVI  & PM,S & Vector & Averaging B-SMP & \xmark & ${\cal O} \left({1}/{t}\right)$ \\
	\hline
	\textbf{Our work} & SVI & MM, NS & Matrix & A-M-SMD & \cmark & ${\cal O} \left({1}/{\sqrt{t}}\right)$ \\
		\hline 
\end{tabular}
\begin{minipage}{15cm}%
\vspace{0.08in}
 SM: \textit{strongly monotone mapping},\quad MM: \textit{merely monotone mapping}, \quad PM: \textit{psedue-monotone mapping},\quad C: \textit{convex},\\ \quad SL: \textit{strongly stable mapping}, \quad S: \textit{smooth function}\quad NS: \textit{nonsmooth function},\\ Opt: \textit{optimzation problem}, \quad $\lambda$: \textit{strong stability parameter}
  \end{minipage}%
\end{table*}
\begin{rem}
It should be noted that the accelerated variants of first-order methods such as SVRG (\cite{johnson2013accelerating}), SAGA (\cite{defazio2014saga}) and IAG (\cite{gurbuzbalaban2017convergence}) provide improved rate guarantees for optimization and VI problems (\cite{chen2017accelerated}) on vector spaces. Developing this type of methods for solving finite-sum and CSVI problems on matrix spaces and providing their convergence analysis can be a direction for future research. 
\end{rem}
The paper is organized as follows. Section \ref{sec:motiv} presents the motivation and source problems. In Section \ref{sec:pre}, the von Neumann divergence and its main properties are discussed and some results that are applied in the analysis of the paper are established. In Section \ref{sec:coop}, we address the finite-sum Problem \eqref{eqn:problem10}, outline a matrix mirror descent incremental subgradient method and provide its convergence analysis. In Section \ref{sec:algorithm}, we present an averaging matrix stochastic mirror descent algorithm for solving CSVI \eqref{eq:VI} and analyze its convergence. 
We report the numerical experiments in Section \ref{sec:num} and conclude in Section \ref{sec:conclusion}.
\par
\textbf{Notation}: Throughout, $\mathbb S_n$ denotes the set of all $n \times n$ symmetric matrices and $\mathbb S_n^+$ the cone of all positive semidefinite matrices. The mapping $F:\mathcal X \to \mathbb R^{n \times n}$ is called monotone if for any $X,Y \in \mathcal X$, we have $\tr{(  X-  Y)(  F (  X)-  F (  Y))}\geq 0$. The set of solutions to VI($\mathbf{\mathcal X},F$) is denoted by $\text{SOL}({\mathcal X}, \mathit F)$.
We define the set $\mathscr X\triangleq\{X\in \mathbb S_n^+|\tr{X} \leq 1\}$. We let $[A]_{uv}$ \ks{denote the components of matrix $A$. $\mathbb C$ is the set of complex numbers.} The spectral norm of a matrix $A$ being the largest singular value of $A$ is denoted by the norm $\Vert A \Vert_2$. The trace norm of a matrix $A$ being the sum of singular values of the matrix is denoted by $\tr{A}$. Note that spectral and trace norms are dual to each other (\cite{fazel2001rank}). We let $\mathbf A^\dagger$ denote the conjugate transpose of matrix $\mathbf A$.
A square matrix $A$ that is equal to its conjugate transpose is called Hermitian. We let $\mathbb H_n$ denote the set of all $n\times n$ Hermitian matrices.
\section{Motivation and Source Problems}
\label{sec:motiv}
Our research is motivated by the following problems:
\begin{itemize}
\item [(a)]\textit{Example on cooperative multi-agent problems: distributed sparse estimation of covariance inverse} 

Given a set of samples $\{z_i^j\}_{j=1}^{n_i}$ associated with agent $i$, where $z_i^j\sim \mathcal N(\mu,\Sigma)$, $n_i$ is the sample size of the $i$th agent, $\mu \in \mathbb R^d$ and $\Sigma \in \mathbb R^{d\times d}$ are the mean and covariance matrix of a multivariate Gaussian distribution, respectively. To estimate $\mu$ and $\Sigma$, consider the maximum likelihood estimators (MLE) given by  
	\[\hat\mu,\hat \Sigma=\underset{\mu,\Sigma}{\text{argmax}} \prod_{i=1}^m\prod_{j=1}^{n_i} \frac{1}{\sqrt{(2\pi)^{n_i}\text{det}(\Sigma)}}\text{exp}\left(-\frac{1}{2}(z_i^j-\mu)^T\Sigma^{-1}(z_i^j-\mu) \right).
\]
This equation can then be cast as a distributed inverse covariance estimation problem
	\[	\underset{\Sigma^{-1}\succ 0}{\text{min}}-\sum_{i=1}^m\text{log}\left(\text{det}\Sigma^{-1} \right)+\sum_{i=1}^m \tr{S_i \Sigma^{-1}},
\]
where $S_i\triangleq \frac{1}{n_i} \sum_{j=1}^{n_i}-\frac{1}{2}(z_i^j-\hat\mu_i)^T(z_i^j-\hat\mu_i)$ with $\hat \mu_i \triangleq \frac{1}{n_i}\sum_{j=1}^{n_i}z_i^j$. To induce sparsity, consider adding \ks{a lasso penalty of the form $\lambda\|P\ast\Sigma^{-1}\|_1$} to the likelihood as follows
\begin{align}
\label{eq:estimate}
	\underset{\Sigma^{-1}\succ 0}{\text{min}}-\sum_{i=1}^m\text{log}\left(\text{det}\Sigma^{-1} \right)+\sum_{i=1}^m \tr{S_i \Sigma^{-1}}+\lambda \|P\ast\Sigma^{-1}\|_1,
\end{align}
where $P$ is a suitable matrix with nonnegative elements, $\lambda>0$ is the regularization parameter, and $\ast$ denotes element-wise multiplication. For a matrix $A$, we define $\|A\|_1=\sum_{i,j}|[A]_{ij}|$.
Two common choices for $P$ would be the matrix of all ones or this matrix with zeros on the diagonal to avoid shrinking diagonal elements of $\Sigma$ (\cite{bien2011sparse}).
Problem \eqref{eq:estimate} can be viewed as an instance of Problem \eqref{eqn:problem10}, where we define
$f_i(\Sigma^{-1})=-\text{log}\left(\text{det}\Sigma^{-1} \right)+\tr{S_i \Sigma^{-1}}+\frac{\lambda}{m} \|P\ast\Sigma^{-1}\|_1.$ 
\begin{rem}
We propose M-MDIS algorithm to solve Problem \eqref{eqn:problem10}. It should be noted that the constraint $\tr{X}=1$ makes the analysis more complicated. Our Analysis can be easily extended to the cases similar to the sparse covariance estimation problem where this constraint does not exist. 
\end{rem}
\item [(b)]\textit{Stochastic non-cooperative Nash games:}
In a non-cooperative game, $N$ players (users) with conflicting interests compete to minimize their own payoff function. Suppose each player controls a positive semidefinite matrix variable $X_i \in \mathcal X_i$ where $\mathcal X_i$ denotes the set of all possible actions of player $i$. We let $X_{-i}\triangleq(  X_1,...,  X_{i-1},  X_{i+1},...,  X_N)$ denote the possible actions of other players and $f_i(X_i,X_{-i})$ denote the payoff function of player $i$. Therefore, the following Nash game needs to be solved 
\begin{align}
\label{eqn:problem1}
\underset{X_i \in \mathcal X_i} {\text{minimize}}\quad f_i(X_i,X_{-i}), \quad \text{for all}~ i=1,\cdots,N,
\end{align}
which includes $N$ semidefinite optimization problems. A solution $X^*=\left(  X^*_1,\ldots,   X^*_N\right)$ to this game, called a Nash equilibrium, is a feasible action profile such that $f_i(  X_i^*,  X_{-i}^*)\leq f_i(  X_i,  X^*_{-i})$, 
for all \nm{$X_i \in \mathcal X_i=\{X_i \in \mathbb S^+_{n_i}|~ \tr{X_i}=1\}$}, $i=1,\ldots,N$. Later, in Lemma \ref{lemma:nash}, we prove that the optimality conditions of Nash game \eqref{eqn:problem1} can be formulated as a VI$(\mathbf{\mathcal X}, F)$ where $\mathcal X\triangleq \{  X|  X=\text{diag} (  X_1,\cdots,  X_N), \:   X_i\in \mathcal X_i,~\text{for all}~i=1,\ldots,N\}$ and $F(X)\triangleq \text{diag}(\nabla_{  X_1} f_1(  X),\cdots,\nabla_{  X_N} f_N(  X))$. Next, we discuss one of the applications of Problem \eqref{eqn:problem1} in wireless communication network.

\textit{Wireless Communication Networks:} A wireless network is composed of transmitters and receivers that generate and detect radio signals, respectively. An antenna enables a transmitter to send signals into the space, and enables a receiver to pick up signals from the space. In a multiple-input multiple-output (MIMO) wireless transmission system, multiple antennas are applied in transmitters and receivers in order to improve the performance. In some MIMO systems such as MIMO broadcast channels and MIMO multiple access channels, there are multiple users with mutual interferes. In recent years, MIMO systems under uncertainty have been studied where the state channel information is subject to noise, delays and other imperfections (\cite{mertikopoulos2017distributed}).
Here, our problem of interest is the throughput maximization in multi-user MIMO networks under feedback errors. In this network, $N$ MIMO links (users) compete where each link $i$ represents a pair of transmitter-receiver with $m_i$ antennas at the transmitter and $n_i$ antennas at the receiver. 
Let $\mathbf  x_i \in \mathbb C^{n_i}$ and $\mathbf y_i \in \mathbb C^{m_i}$ denote the signal transmitted from and received by the $i$th link, respectively. The signal model can be described by $\mathbf{y}_i={H}_{ii}\mathbf{x}_i+\sum\nolimits_{j \ne i}{H}_{ji}\mathbf{x}_j+\mathbf{\epsilon}_i$,
where ${H}_{ii} \in \mathbb C^{m_i \times n_i}$ is the direct-channel matrix of link $i$, ${H}_{ji} \in \mathbb C^{m_i \times n_j}$ is the cross-channel matrix between transmitter $j$ and receiver $i$, and $\mathbf {\epsilon}_i \in \mathbb C^{m_i}$ is a zero-mean circularly symmetric complex Gaussian noise vector with the covariance matrix $\mathbf I_{m_i}$ (\cite{mertikopoulos2016learning}). 
 Each transmitter $i$ tries to improve its performance 
by transmitting at its maximum power level. Hence, the action for each player is the transmit power. However, doing so results in a conflict in the system since the overall interference increases and affects the capability of all involved transmitters.
Here, we consider the interference generated by other users as an additive noise. Therefore, $\sum_{j \ne i}{H}_{ji}\mathbf{x}_j$ 
represents the multi-user interference (MUI) received by the $i$th player and generated by other users. 
Assuming the random vector $\mathbf{x}_i$ follows a complex Guassian distribution, transmitter $i$ controls its input signal covariance matrix $  X_i\triangleq\EX[\mathbf{x}_i\mathbf{x}_i^\dag]$ subject to two constraints: first the signal covariance matrix is positive semidefinite and second each transmitter's maximum transmit power is set to a positive scalar $p$.  
Under these assumptions, each user's transmission throughput for a given set of users' covariance matrices $X_1,\ldots,X_N$ is given by
\begin{align}
\label{eq:game}
	R_i(  X_i,  X_{-i})&=\log \det\left(\mathbf I_{m_i}+\sum\nolimits_{j=1}^N H_{ji}  X_j  H_{ji}^\dagger\right)-\log \det(W_{-i}),
\end{align}
 where $W_{-i}=\mathbf I_{m_i}+\sum_{j \ne i}H_{ji}X_j H_{ji}^\dagger$ is the MUI-plus-noise covariance matrix at receiver $i$ (\cite{telatar1999capacity}). Let $\mathcal X_i=\{X_i \in \mathbb C^{n_i\times n_i}:X_i\succeq 0$, $\tr{X_i}=p\}$. The goal is to solve 
\begin{align}
\label{eq:Rgame} 
\underset{X_i \in \mathcal X_i} {\text{maximize}}\quad R_i(  X_i,  X_{-i}), \quad \text{for all}~ i=1,\ldots,N.
\end{align}
\end{itemize}
In section \ref{sec:num}, we present the implementation of our scheme in addressing Problem \eqref{eq:Rgame}.
\par
\section{Preliminaries} 
\label{sec:pre}
Suppose $\omega: \text{dom}(\omega) \to \mathbb R$ is a strictly convex and differentiable function, where $\text{dom}(\omega) \subseteq \mathbb{R}^{n\times n}$, and let $X,Y\in \text{dom}(\omega)$. Then, Bregman divergence between $X$ and $Y$ is defined as 
\nm{
	$D(  X,  Y):=\omega(  X)-\omega(  Y)-\tr{(  X-  Y)\nabla\omega(  Y)^T}.$
In what follows, our choice of $\omega$ is the quantum entropy (\cite{vedral2002role}), 
\begin{equation}
\label{eq:entropy}
\omega(X) \triangleq \left\{
\begin{array}{rl}
\tr{X\log X-X}&~\text{if }\quad X\in \mathcal B,\\
+\infty \quad \quad \quad \quad \quad \quad & \quad \text{otherwise},
\end{array} \right.
\end{equation}
where $\mathcal B\triangleq\{X \in \mathbb{S}_{ n}:  X\succeq 0$ and $\tr{X}=1\}$. The Bregman divergence corresponding to the quantum entropy is called von Neumann divergence and is given by
\begin{align}
	D(X,Y)=\tr{X\log X- X\log Y}
\end{align}
}
(\cite{tsuda2005matrix}). In our analysis, we use the following property of $\omega$.
\begin{lemma} (\cite{yu2013strong})
\label{lm:strong} 
Let $\mathscr X\triangleq\{X\in \mathbb S_n^+|\tr{X} \leq 1\}$. The quantum entropy $\omega : \mathscr X\to \mathbb R$ is strongly convex with modulus 1 under the trace norm.
\end{lemma}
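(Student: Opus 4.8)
The plan is to prove the equivalent first-order characterization of $1$-strong convexity in the trace norm, namely the von Neumann (Bregman) divergence bound
\[
D(X,Y)=\tr{X\log X-X\log Y}-\tr{X}+\tr{Y}\ \ge\ \tfrac12\|X-Y\|_{\text{tr}}^2,\qquad X,Y\in\mathscr X .
\]
Here I have used that $\nabla\omega(Y)=\log Y$, so that $D(X,Y)=\omega(X)-\omega(Y)-\tr{(X-Y)\log Y}$ collapses to the displayed expression; the inequality is exactly the statement $\omega(X)\ge\omega(Y)+\tr{(X-Y)\log Y}+\tfrac12\|X-Y\|_{\text{tr}}^2$, which is one of the standard equivalent definitions of strong convexity with modulus $1$ relative to the (non-Euclidean) trace norm. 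By continuity of both sides it suffices to treat $X,Y\succ0$ in the relative interior of $\mathscr X$, where $\log Y$ and all expressions below are well defined.

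My main route reduces the matrix inequality to the classical Pinsker inequality by a pinching argument. First I would diagonalize the Hermitian matrix $X-Y=\sum_{k}\mu_k\,u_ku_k^{\dagger}$ and let $\mathcal P(\cdot)=\sum_k u_ku_k^{\dagger}(\cdot)u_ku_k^{\dagger}$ be the pinching (completely positive, trace-preserving) map onto this eigenbasis. Writing $p_k=\tr{u_ku_k^{\dagger}X}$ and $q_k=\tr{u_ku_k^{\dagger}Y}$, the images $\mathcal P(X),\mathcal P(Y)$ are diagonal with entries $p_k,q_k\ge0$ and $\sum_kp_k=\tr{X}\le1$, $\sum_kq_k=\tr{Y}\le1$. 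The key geometric point is that $X-Y$ is already diagonal in this basis, so $\mathcal P(X)-\mathcal P(Y)=X-Y$ and hence $\sum_k|p_k-q_k|=\sum_k|\mu_k|=\|X-Y\|_{\text{tr}}$. Invoking monotonicity of the quantum relative entropy under the map $\mathcal P$ (together with the invariance of the linear trace terms, since $\mathcal P$ is trace-preserving) gives $D(X,Y)\ge D(\mathcal P(X),\mathcal P(Y))$, and the right-hand side is the \emph{scalar} quantity $\sum_k\big(p_k\log\tfrac{p_k}{q_k}-p_k+q_k\big)$. It then remains to prove the classical bound $\sum_k\big(p_k\log\tfrac{p_k}{q_k}-p_k+q_k\big)\ge\tfrac12\big(\sum_k|p_k-q_k|\big)^2$ for nonnegative vectors with total masses at most $1$, which is Pinsker's inequality; I would obtain it from the one-dimensional estimate combined with a Cauchy--Schwarz/convexity argument that turns the sum of per-coordinate square roots into the $\ell_1$ norm.

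If one prefers to avoid the monotonicity theorem, I would instead use Taylor's formula with integral remainder, $D(X,Y)=\int_0^1(1-t)\,\omega''\!\big(Y+t(X-Y)\big)[X-Y,X-Y]\,dt$, and reduce everything to the uniform Hessian lower bound $\omega''(Z)[H,H]\ge\|H\|_{\text{tr}}^2$ valid for every $Z\succ0$ with $\tr{Z}\le1$ and every Hermitian $H$. Using the integral representation $\omega''(Z)[H,H]=\int_0^\infty\tr{(Z+sI)^{-1}H(Z+sI)^{-1}H}\,ds$, equivalently the divided-difference form $\omega''(Z)[H,H]=\sum_{i,j}\tfrac{\log z_i-\log z_j}{z_i-z_j}|H_{ij}|^2$ in the eigenbasis of $Z$, the elementary logarithmic-mean inequality $\tfrac{\log a-\log b}{a-b}\ge\tfrac{2}{a+b}$ together with $z_i+z_j\le\tr{Z}\le1$ controls each coefficient; the trace constraint is precisely what forces the diagonal curvature $1/z_i$ to be at least $1$.

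The genuine obstacle is the passage from Frobenius (second-moment) control to the nuclear-norm-squared we actually need: bounding $\omega''(Z)[H,H]$ termwise yields only $\gtrsim\|H\|_F^2=\sum_i\sigma_i(H)^2$, whereas the target is $\|H\|_{\text{tr}}^2=\big(\sum_i\sigma_i(H)\big)^2$, and these differ by a dimension-dependent factor. Closing this gap is where a genuinely global device is required---either the pinching/data-processing step above, which converts the matrix nuclear norm into an $\ell_1$ norm of scalars where Pinsker's quadratic lower bound applies, or an equivalent operator-convexity argument. The remaining, more routine, point is the bookkeeping for the subnormalized regime $\tr{X},\tr{Y}\le1$, i.e.\ tracking the $-\tr{X}+\tr{Y}$ terms and verifying that the generalized Pinsker constant stays $\tfrac12$ when the total masses do not exceed $1$.
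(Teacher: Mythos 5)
Your plan is correct, but note that there is nothing in the paper to compare it against: the lemma is stated with a citation to \cite{yu2013strong} and no proof is given in the paper itself. Your argument is essentially the standard one underlying that reference. The chain you propose --- reduce $1$-strong convexity to the Bregman bound $D(X,Y)\ge\tfrac12\|X-Y\|_{\mathrm{tr}}^2$, pinch onto an eigenbasis of $X-Y$ (which fixes $X-Y$, hence preserves $\sum_k|p_k-q_k|=\|X-Y\|_{\mathrm{tr}}$), apply monotonicity of quantum relative entropy under the trace-preserving pinching (the linear terms $-\tr{X}+\tr{Y}$ pass through unchanged), and finish with a subnormalized Pinsker inequality --- is sound, and you correctly diagnose that the Hessian route alone cannot close the Frobenius-versus-trace-norm gap, so the pinching/data-processing device is indeed the essential ingredient. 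The one step you defer as ``routine'' that genuinely needs the right one-dimensional estimate is the last: to keep the constant $\tfrac12$ when $\sum_k p_k\le 1$ and $\sum_k q_k\le 1$, use the per-coordinate bound $a\log(a/b)-a+b\ge(a-b)^2/\bigl(\tfrac23 a+\tfrac43 b\bigr)$ together with Cauchy--Schwarz with weights $w_k=\tfrac23 p_k+\tfrac43 q_k$, giving $\bigl(\sum_k|p_k-q_k|\bigr)^2\le\bigl(\sum_k w_k\bigr)\sum_k (p_k-q_k)^2/w_k\le 2\sum_k\bigl(p_k\log(p_k/q_k)-p_k+q_k\bigr)$; the factor $\sum_k w_k\le\tfrac23+\tfrac43=2$ is precisely where subnormalization enters. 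Finally, your expression for $D(X,Y)$ including the correction $-\tr{X}+\tr{Y}$ is the right one on $\mathscr X$; the paper's displayed formula $D(X,Y)=\tr{X\log X-X\log Y}$ is valid only on the equal-trace set $\mathcal B$.
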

Since $ \mathcal B\subset\mathbf{\mathscr X}$, the quantum entropy $\omega:{\mathcal B}\to \mathbb R$ is also strongly convex with modulus 1 under the trace norm.
Next, we derive the conjugate of the quantum entropy and its gradient. 
\begin{lemma} [Conjugate of von Neumann entropy]
\label{lemma:conjugatevon}
Let $Y \in \mathbb S_{n}$ and $\omega(X)$ be defined as \eqref{eq:entropy}. Then, we have 
\begin{equation}
\label{eq:conjugate}
\omega^*(Y)=\log(\tr{\exp(Y+I_{n})}),
 \end{equation}
 \begin{equation}
\nabla\omega^*(Y)=\frac{\exp(Y+I_{n})}{\tr{\exp(Y+I_{n})}}.
\label{eq:gradient-omega-star}
\end{equation}
\end{lemma}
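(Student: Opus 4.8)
The plan is to compute the conjugate directly from its definition $\omega^*(Y) = \sup_{X \in \mathbb{S}_n}\{\tr{XY} - \omega(X)\}$, where the pairing reduces to $\tr{XY}$ because $Y \in \mathbb{S}_n$ is symmetric. Since $\omega(X) = +\infty$ whenever $X \notin \mathcal B$, the supremum is effectively taken over the spectrahedron $\mathcal B$, on which $\omega(X) = \tr{X\log X - X}$. First I would use the trace constraint $\tr{X}=1$ to write $\omega(X) = \tr{X\log X} - 1$ on $\mathcal B$, so that $\omega^*(Y) = 1 + \sup_{X\in\mathcal B}\{\tr{XY} - \tr{X\log X}\}$. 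The objective here is linear plus the negative of the convex function $\tr{X\log X}$, hence concave, and the feasible set $\mathcal B$ is convex; moreover, strong convexity of $\omega$ (Lemma \ref{lm:strong}) guarantees that the maximizer is unique. Thus it suffices to locate a single stationary point.

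Next I would solve this concave program via a Lagrangian that dualizes only the equality constraint $\tr{X}=1$ with a multiplier $\lambda$, temporarily dropping the constraint $X\succeq 0$. The key computation is the matrix gradient $\nabla_X \tr{X\log X} = \log X + I_n$, which follows from the standard differentiation rule for spectral trace functions applied to $g(x) = x\log x$ with $g'(x) = \log x + 1$. Setting the stationarity condition $Y - (\log X + I_n) - \lambda I_n = 0$ yields $X = \exp(Y - (1+\lambda)I_n)$. Because the exponential is positive definite, the dropped constraint $X \succeq 0$ is automatically inactive at this candidate (the entropy term acts as a barrier that pushes the optimum into the interior), which retroactively justifies ignoring it. Enforcing $\tr{X}=1$ and using $\exp(Y-(1+\lambda)I_n)=e^{-(1+\lambda)}\exp(Y)$ gives $1+\lambda = \log\tr{\exp(Y)}$, so the maximizer is $X^* = \exp(Y)/\tr{\exp(Y)}$, a density matrix as required.

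To obtain the value \eqref{eq:conjugate}, I would substitute $X^*$ back. Since $\log X^* = Y - \log(\tr{\exp(Y)}) I_n$ and $\tr{X^*}=1$, the entropy term telescopes against $\tr{X^*Y}$, leaving $\omega^*(Y) = \log\tr{\exp(Y)} + 1$. Rewriting $e\exp(Y) = \exp(I_n)\exp(Y) = \exp(Y+I_n)$ (valid since $I_n$ commutes with $Y$) converts this into $\omega^*(Y) = \log\tr{\exp(Y+I_n)}$. The same commuting identity rewrites the maximizer as $X^* = \exp(Y+I_n)/\tr{\exp(Y+I_n)}$.

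Finally, for \eqref{eq:gradient-omega-star} I would invoke the standard convex-analytic fact that, when $\omega$ is closed and strongly convex, $\omega^*$ is differentiable and $\nabla\omega^*(Y)$ equals the unique maximizer $X^*(Y)$ of the defining supremum; strong convexity is supplied by Lemma \ref{lm:strong}. This immediately gives $\nabla\omega^*(Y) = \exp(Y+I_n)/\tr{\exp(Y+I_n)}$. I expect the main obstacle to be the rigorous justification of the matrix differentiation step together with the inactivity of the semidefinite constraint; both are routine but require care, since $\tr{X\log X}$ is a spectral function whose gradient formula must be applied on the relatively open set of positive definite density matrices where $\log X$ is well defined.
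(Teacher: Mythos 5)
Your proposal is correct, and its skeleton matches the paper's: evaluate the supremum defining $\omega^*$ over $\mathcal B$, identify the maximizer as the Gibbs state $\exp(Y+I_n)/\tr{\exp(Y+I_n)}$, and substitute back to get $\log(\tr{\exp(Y+I_n)})$. The differences lie in how the two key facts are justified. For the maximizer, the paper simply cites the Gibbs variational principle (Example 3.29 in the Hiai reference), whereas you derive it from scratch: Lagrangian with a multiplier on $\tr{X}=1$, the spectral gradient formula $\nabla_X\tr{X\log X}=\log X+I_n$, and the observation that the matrix exponential is positive definite so the semidefinite constraint is inactive (your concavity argument makes this rigorous, since at the stationary point $\nabla h(X^*)=\lambda I_n$ and $\tr{X-X^*}=0$ for all $X\in\mathcal B$, so the first-order optimality inequality holds over all of $\mathcal B$). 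For the gradient formula \eqref{eq:gradient-omega-star}, the paper differentiates the closed-form expression directly using $\nabla_Y\tr{\exp(Y)}=\exp(Y)$, while you invoke the convex-duality fact that for a closed, strongly convex $\omega$ (Lemma \ref{lm:strong}) the conjugate is differentiable and $\nabla\omega^*(Y)$ equals the unique maximizer of the defining supremum. Your route is more self-contained and has the side benefit that membership $\nabla\omega^*(Y)\in\mathcal B$ is automatic (the maximizer lives in $\mathcal B$ by construction), which the paper instead verifies as a separate observation; the paper's route is shorter by outsourcing the variational principle to a citation and gives an independent matrix-calculus confirmation of the gradient. Both arguments are sound.
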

\begin{proof}
Note that $\omega$ is a lower semi-continuous convex function on the linear space of all symmetric matrices. The conjugate of function $\omega$ is defined as 
\begin{align}
	\label{eq:supconjugate}
 &\omega^*({Y})=\sup\{\tr{  D  Y}-\omega(  D) : ~  D \in \mathcal B\}=\sup\{\tr{ D  Y}-\tr{  D \log   D-  D} :  D \in \mathcal B \}\notag\\
&=-\inf\{-\underbrace{\tr{  D(  Y+  I_{n})}+\tr{  D \log   D}}_{\text{Term 1}},~  D \in \mathcal B\}.
\end{align}
The minimizer of the above problem is $\displaystyle  D= \frac{\exp(  Y+  I_{n})}{\tr{\exp(  Y+  I_{n})}}$ which is called the Gibbs state (see \cite{hiai2014introduction}, Example 3.29).
By plugging it into Term 1, we have \eqref{eq:conjugate}. The relation \eqref{eq:gradient-omega-star} follows by standard matrix analysis and the fact that $\nabla_  {Y}\tr{\exp(  Y)}=\exp(  Y)$ (\cite{athans1965gradient}).  We observe that $\nabla\omega^*(Y)$ is a positive semidefinite matrix with a trace equal to one, implying that $\nabla\omega^*(Y) \in \mathcal B$. 
\end{proof}

Next, we show that the optimality conditions of a matrix constrained optimization problem can be formulated as a VI. The proof can be found in the Appendix. 
\begin{lemma} \label{lemma:optimality}
Let $\mathcal B \subseteq \mathbb R^{n \times n}$ be a nonempty closed convex set, and let $f:\mathbb R^{n \times n}\to \mathbb R$ be a differentiable convex function. Consider the optimization problem
\begin{align}
\label{eqn:problem2} 
\underset{\widetilde{X} \in \mathcal B} {\text{minimize}}\quad f( \widetilde{  X}).
\end{align}
A matrix $\widetilde{  X}^*$ is optimal to Problem \eqref{eqn:problem2} iff $\widetilde{  X}^* \in \mathcal B$ and $\tr{(  Z-\widetilde {  X}^*)^T \nabla f(\widetilde{  X}^*)} \geq 0$, for all $Z\in \mathcal B$.
\end{lemma}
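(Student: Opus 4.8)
The plan is to recognize this as the standard variational (first-order) characterization of a constrained minimizer of a differentiable convex function, transplanted from $\mathbb{R}^n$ to the matrix space $\mathbb{R}^{n\times n}$ equipped with the trace inner product $\langle A,B\rangle \triangleq \tr{A^T B}$. With respect to this inner product the directional derivative of $f$ at a point $W$ in direction $V$ is $\langle \nabla f(W),V\rangle = \tr{V^T\nabla f(W)}$, so the claimed condition $\tr{(Z-\widetilde X^*)^T\nabla f(\widetilde X^*)}\geq 0$ is exactly the statement that every feasible direction is a non-descent direction at $\widetilde X^*$. I would prove the two implications separately.

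For necessity ($\widetilde X^*$ optimal $\Rightarrow$ the inequality), I would fix an arbitrary $Z\in\mathcal B$ and exploit convexity of $\mathcal B$: the point $\widetilde X^*+\alpha(Z-\widetilde X^*)$ stays in $\mathcal B$ for every $\alpha\in[0,1]$. Defining the scalar function $g(\alpha)\triangleq f\!\left(\widetilde X^*+\alpha(Z-\widetilde X^*)\right)$, optimality of $\widetilde X^*$ forces $g(\alpha)\geq g(0)$ on $[0,1]$, so the one-sided derivative satisfies $g'(0^+)\geq 0$. Applying the chain rule for differentiable $f$ gives $g'(0^+)=\tr{(Z-\widetilde X^*)^T\nabla f(\widetilde X^*)}$, which yields the desired inequality. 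This is where the only genuine care is needed, namely justifying the chain rule for the composition of $f$ with the affine matrix-valued path; this is routine once the trace inner product is used consistently, and does not require anything beyond differentiability of $f$.

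For sufficiency ($\widetilde X^*\in\mathcal B$ and the inequality $\Rightarrow$ optimality), I would invoke the first-order characterization of convexity for a differentiable function, namely $f(Z)\geq f(\widetilde X^*)+\tr{(Z-\widetilde X^*)^T\nabla f(\widetilde X^*)}$ for every $Z$. Combined with the assumed inequality $\tr{(Z-\widetilde X^*)^T\nabla f(\widetilde X^*)}\geq 0$ for all $Z\in\mathcal B$, this immediately gives $f(Z)\geq f(\widetilde X^*)$ for all $Z\in\mathcal B$, so $\widetilde X^*$ minimizes $f$ over $\mathcal B$.

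I do not expect a serious obstacle here, since this is a classical result; closedness of $\mathcal B$ plays no role in the argument (only convexity and nonemptiness matter, the latter only to make optimality meaningful), and the matrix setting enters solely through the bookkeeping of the trace inner product. The two facts I rely on, the chain rule for $g$ and the gradient inequality for convex $f$, are both standard and hold verbatim in $\mathbb{R}^{n\times n}$; the whole content is transporting the Euclidean argument into matrix notation.
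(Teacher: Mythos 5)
Your proof is correct and takes essentially the same route as the paper's: necessity is established by examining the first-order behavior of $f$ along the feasible segment $\widetilde X^*+\alpha(Z-\widetilde X^*)$ (you argue directly via the one-sided derivative $g'(0^+)\geq 0$, while the paper phrases the same computation as a contradiction using a first-order Taylor expansion), and sufficiency is the identical application of the gradient inequality for differentiable convex functions together with the hypothesis. The only minor difference is that your argument explicitly uses nothing beyond differentiability, whereas the paper's proof invokes continuous differentiability for its Taylor expansion even though the lemma only assumes differentiability.
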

\nm{The next Lemma shows a set of sufficient conditions under which a Nash equilibrium can be obtained by solving a VI.}
\begin{lemma}  
[Nash equilibrium]
\label{lemma:nash}
Let $\mathcal X_i \subseteq \mathbb S_{n_i}$ be a nonempty closed convex set and $f_i(  X_i,  X_{-i})$ be a differentiable convex function in $  X_i$ for all $i=1,\cdots,N$, where $  X_i \in \mathcal X_i$ and $  X_{-i} \in \prod_{j\ne i} {\mathcal X_j}$. Then, $  X^*\triangleq \text{diag}(  X_1^*,\cdots,  X_N^*)$ is a Nash equilibrium (NE) to game \eqref{eqn:problem1} if and only if $  X^*$ solves VI($\mathbf{\mathcal X}, F$), where
\begin{align}
\label{Fdefinition}
&F(X)\triangleq \text{diag}(\nabla_{  X_1} f_1(  X),\cdots,\nabla_{  X_N} f_N(  X)),	\\
\label{xdefinition}
&\mathcal X\triangleq \{  X|  X=\text{diag} (  X_1,\cdots,  X_N), \:   X_i\in \mathcal X_i,~\text{for all} ~ i\}.
\end{align}
\end{lemma}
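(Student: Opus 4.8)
The plan is to prove the equivalence by passing through the per-player optimality conditions and exploiting two structural facts: the Cartesian structure of $\mathcal X$ and the block-diagonal form of $F$. The pivotal observation is that for block-diagonal matrices the trace pairing decomposes blockwise, so that for any $X=\text{diag}(X_1,\ldots,X_N)\in\mathcal X$ one has
\begin{align}
\label{eq:tracedecomp}
\tr{(X-X^*)^T F(X^*)}=\sum_{i=1}^N \tr{(X_i-X_i^*)^T\nabla_{X_i} f_i(X^*)}.
\end{align}
This holds because the off-diagonal blocks of $(X-X^*)^T$ are multiplied against the zero off-diagonal blocks of the block-diagonal $F(X^*)$, so only the diagonal blocks contribute to the trace.

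I would first record that, by definition, $X^*$ is a Nash equilibrium exactly when, for every $i$, the block $X_i^*$ solves the individual problem $\min_{X_i\in\mathcal X_i} f_i(X_i,X_{-i}^*)$. Since each $\mathcal X_i$ is nonempty, closed and convex and $f_i(\cdot,X_{-i}^*)$ is differentiable and convex, Lemma \ref{lemma:optimality} applies to each player's subproblem: $X_i^*$ is optimal if and only if $X_i^*\in\mathcal X_i$ and
\begin{align}
\label{eq:peri}
\tr{(Z_i-X_i^*)^T\nabla_{X_i} f_i(X^*)}\ge 0,\qquad\text{for all }Z_i\in\mathcal X_i.
\end{align}
For the forward direction, I assume $X^*$ is a Nash equilibrium, so \eqref{eq:peri} holds for every $i$. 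Given any $X\in\mathcal X$, I apply \eqref{eq:peri} with $Z_i=X_i$, sum over $i$, and invoke \eqref{eq:tracedecomp} to conclude $\tr{(X-X^*)^T F(X^*)}\ge 0$; since $X^*\in\mathcal X$ by construction, $X^*$ solves VI$(\mathcal X,F)$.

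For the converse, I suppose $X^*$ solves VI$(\mathcal X,F)$ and isolate one player at a time. Fixing $i$ and an arbitrary $Z_i\in\mathcal X_i$, I choose the test point $X=\text{diag}(X_1^*,\ldots,X_{i-1}^*,Z_i,X_{i+1}^*,\ldots,X_N^*)$, which lies in $\mathcal X$ by its Cartesian structure. In the VI inequality all summands of \eqref{eq:tracedecomp} except the $i$-th vanish because $X_j-X_j^*=0$ for $j\ne i$, leaving precisely \eqref{eq:peri}. By Lemma \ref{lemma:optimality}, this is the optimality condition for player $i$'s subproblem, so $X_i^*$ minimizes $f_i(\cdot,X_{-i}^*)$ over $\mathcal X_i$; as $i$ was arbitrary, $X^*$ is a Nash equilibrium.

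The step I expect to require the most care is justifying \eqref{eq:tracedecomp} and, relatedly, verifying that the block $\nabla_{X_i} f_i(X^*)$ appearing in $F$ coincides with the partial gradient used in player $i$'s subproblem when $X_{-i}$ is held at $X_{-i}^*$. One must confirm that each $\nabla_{X_i} f_i$ resides in the symmetric matrix space so the trace pairings are well-defined, and that the block-diagonal multiplication genuinely discards the cross terms in the trace. Once this bookkeeping is in place, both implications follow directly from Lemma \ref{lemma:optimality} applied separately to each player, and no further estimates are needed.
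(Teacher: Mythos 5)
Your proposal is correct and follows essentially the same route as the paper's own proof: both directions rest on applying Lemma \ref{lemma:optimality} to each player's subproblem, the forward implication sums the per-player inequalities via the blockwise trace decomposition (which the paper justifies by expanding the trace into entry-wise sums), and the converse uses exactly the same test point that differs from $X^*$ only in the $i$-th block. The only difference is presentational — you state the block decomposition \eqref{eq:tracedecomp} as an explicit standalone identity, whereas the paper carries it out through component-wise summation — so no substantive comparison is needed.
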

\begin{proof}
First, suppose $  X^*$ is an NE to game \eqref{eqn:problem1}. We want to prove that $  X^*$  solves VI($\mathbf{\mathcal X}, F$), i.e, $\tr{(Z-X^*)^T F(  X^*)} \geq 0$, for all $Z\in \mathcal X$.
By optimality conditions of optimization problem $\underset{X_i \in \mathcal X_i} {min}\ f_i(X_i,X_{-i})$ and from Lemma \ref{lemma:optimality}, we know $X^*$ is an NE if and only if $\tr{(Z_i-X_i^*)^T\nabla_{X_i} f_i(X^*)} \geq 0$ for all $Z_i\in \mathcal X_i$ \nm{and all} $i=1,\ldots,N$.
Then, we \nm{obtain} for all $i=1,\cdots,N$ 
\begin{align}
\label{eq:lemmfdefinition-2}
&\tr{(  Z_i-  X_i^*)^T\nabla_{X_i} f_i(  X^*)}=\sum_{u}\sum_{v}[Z_i-  X_i^*]_{uv}[\nabla_{X_i} f_i(  X^*)]_{uv}\geq 0.
\end{align}
Invoking the definition of mapping $F$ given by \eqref{Fdefinition} and \nm{from} \eqref{eq:lemmfdefinition-2}, we have 
$\tr{(  Z-  X^*)^T F(  X^*)}=\sum_{i,u,v}[Z_i-  X_i^*]_{uv}[\nabla_{X_i} f_i(  X^*)]_{uv}\geq 0.$
From the definition of VI($\mathbf{\mathcal X}, F$) and relation \eqref{eq:VI2}, we conclude that $  X^*\in \text{SOL}({\mathcal X}, \mathit F)$. Conversely, suppose $  X^* \in \text{SOL}({\mathcal X}, \mathit F)$. Then, $\tr{(  Z-  X^*)^T F(  X^*)} \geq 0, \text{for all} \:   Z \in \mathcal X$. Consider a fixed $i \in \{1,\ldots,N\}$ and a matrix $\bar{  Z} \in \mathcal X$ given by \eqref{xdefinition} such that the only difference between $  X^\ast$ and $\bar{  Z}$ is in $i$-th block, i.e. 
\begin{align*}
\bar{  Z}= \text{diag}\left(\left[  X_1^*\right],\ldots,\left[  X_{i-1}^*\right],\left[  Z_i\right],\left[  X_{i+1}^*\right],\ldots,\left[  X_{N}^*\right] \right),
\end{align*}
where $  Z_i$ is an arbitrary matrix in $\mathcal X_i$. Then, we have
\begin{align}
\label{eq:z-x}
\bar{  Z}-  X^*= \text{diag}\left(\mathbf 0_{n_1\times n_1},\ldots,\left[  Z_{i} -  X_i^*\right],\ldots,\mathbf 0_{n_N\times n_N} \right).
\end{align}
Therefore, substituting $\bar{  Z}-  X^*$ by term \eqref{eq:z-x}, we obtain 
\begin{align*}
	\tr{(\bar{  Z}-  X^*)^T F(  X^*)}=\sum_{u}\sum_{v}[(  Z_i-  X_i^*)]_{uv}[\nabla_{  X_i} f_i(  X^*)]_{uv}=\tr{(  Z_i-  X_i^*)^T\nabla_{  X_i} f_i(  X^*)}\geq 0.
\end{align*}
Since $i$ was chosen arbitrarily, $\tr{(  Z_i-  X_i^*)^T\nabla_{  X_i} f_i(  X^*)}\geq 0$ for any $i=1,...,N$. Hence, by applying Lemma \ref{lemma:optimality} we conclude that $  X^*$ is a Nash equilibrium to game \eqref{eqn:problem1}. 
\end{proof}
\section{Cooperative multi-agent problems} \label{sec:coop}
Consider the multi-agent optimization Problem \eqref{eqn:problem10} on semidefinite matrix spaces. In this section, we present the mirror descent incremental subgradient method for solving \eqref{eqn:problem10}. Algorithm \ref{alg3} presents the outline of the M-MDIS method. The method maintains two matrices for each agent $i$: primal $U_i$ and dual $Y_i$. The connection between the two matrices is via a function $U_{i}=\nabla \omega^*(Y_{i})$ which projects $Y_{i}$ onto the set $\mathcal B$ defined by \eqref{eq:setx}. At each iteration $t$ and for any agent $i$, first, the subgradient of $f_i$ is calculated at $U_{i-1,t}$, denoted by $\tilde\nabla f_i( U_{i-1,t})$. Next, we update the dual matrix by moving along the subgradient. Here $\eta_t$ is a non-increasing step-size sequence. Then, $Y_{i,t}$ will be projected onto the set $\mathcal B$ using the closed-form solution \eqref{eq:project2}.  It should be noted that the update rule \eqref{eq:project2} is obtained by applying Lemma \ref{lemma:conjugatevon}. Finally, the primal and dual matrices of agent $m$, i.e. $U_{m,t}$ and $Y_{m,t}$ are the input to the next iteration. 
\begin{algorithm}
 \caption{Matrix Mirror Descent Incremental Subgradient (M-MDIS)}
\label{alg3}
\begin{algorithmic}
     \STATE 1: \textbf{initialization}: pick $X_{0} \in \mathcal B$, and $Y_{m,-1} \in \mathbb S_n$ arbitrarily.
			 \STATE 2: \textbf{General step}: for any $t= 0, 1, 2, \cdots$ do the following: 
			\begin{itemize}
				 \item [(a)] $U_{0,t}:=X_{t}$ and $Y_{0,t}:=Y_{m,t-1}$
				\item [(b)] For {i=1,...,$m$} do the following:
				\begin{align}
						 &Y_{i,t}:=  Y_{i-1,t}-\eta_{t}\tilde\nabla f_i( U_{i-1,t}) \\
						\label{eq:project2}
				     &U_{i,t}:= \displaystyle\frac{\exp(Y_{i,t}+\mathbf I_n)}{\tr{\exp(Y_{i,t}+\mathbf I_n)}}
				\end{align}
						 \item [(c)] $X_{t+1}:=U_{m,t}$.
\end{itemize} 
\end{algorithmic}
\end{algorithm}
Next, we state the main assumption and discuss its rationality.
\ks{
\begin{assumption} 
\label{ass:boundsub}
Let the set $\mathcal B\triangleq\{X \in \mathbb{S}_{ n}:  X\succeq 0$ and $\tr{X}=1\}$. The functions $f_i$'s are proper and convex on $\mathcal B$. 
\end{assumption} 
\begin{rem}[Boundedness of subgradients] 
\label{rem:boundsub}
Under Assumption \ref{ass:boundsub}, the union $\underset{X \in \mathcal B}{\cup}\partial f_i(X)$ is nonempty and bounded (\cite{Beck17}, Theorem 3.16). Therefore, there exists a constant $L_{f_i}$ for which $\|\tilde \nabla f_i(X)\|_2 \leq L_{f_i}$ for all $\tilde \nabla f_i(X)\in \partial f_i(X)$, $X\in \mathcal B$ and for all $i=1,\ldots,m$.
\end{rem}
}
We use the following relation in the convergence analysis, 
\begin{align}
\label{eq:twosided}
Y_{i,t}\triangleq \tilde\nabla\omega(U_{i,t})\in \partial \omega (U_{i,t})\Leftrightarrow U_{i,t} \in \partial \omega^\star (Y_{i,t}).	
\end{align}
It should be noted that the above relation holds because $\omega$ is a closed and convex function (\cite{rockafellar1970convex}).
Since $(A-B)^2 \in \mathbb S_n^+$, we have $ 0 \leq \tr{(A-B)^2}=\tr{A^2}-2\tr{AB}+\tr{B^2}$. Therefore, 
\begin{align}
\label{eq:fenchel2}
2\tr{A^TB} \leq \tr{A^2}+\tr{B^2} \leq (\tr{A})^2+n\|B^2\|_2=(\tr{A})^2+n\|B\|_2^2,
\end{align}
where the last inequality follows by positive semidefinteness of matrix $A$ and the relation $\tr{B} \leq n\|B\|_2$. Next, we prove the convergence of M-MDIS algorithm. 
\begin{thm} [asymptotic convergence] \label{th1}
Consider Problem \eqref{eqn:problem10}. Let Assumption \ref{ass:boundsub} hold. Let $\{X_t\}$ be generated by the M-MDIS method with a positive stepsize sequence $\{\eta_t\}$. 
If $\lim_{T\rightarrow \infty}\frac{\sum_{t=0}^{T-1}\eta_t^2}{\sum_{t=0}^{T-1}\eta_t}=0$, then $f_T^{\min}$ converges to $f^*$ as $T\rightarrow \infty$, where $f_T^{\min}\triangleq\underset{t=0,\cdots,T}{\min} ~ f(X_t)$.
\end{thm}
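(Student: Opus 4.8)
The plan is to use the von Neumann divergence $D(X^*,\cdot)$ to a fixed minimizer $X^*$ as a Lyapunov function, establish a one-step ``descent'' inequality for each inner update, and then telescope twice — over the inner index $i=1,\dots,m$ and over the outer index $t$. Since $\mathcal B$ is compact and each $f_i$ is Lipschitz on $\mathcal B$ by the subgradient bound (Remark \ref{rem:boundsub}), $f=\sum_i f_i$ attains its minimum, so an optimizer $X^*$ with $f(X^*)=f^*$ exists and $f(X_t)-f^*\ge0$ throughout.

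For a single inner step I would fix any $Z\in\mathcal B$ and apply the three-point identity for the Bregman divergence to the triple $(Z,U_{i,t},U_{i-1,t})$. By the two-sided relation \eqref{eq:twosided}, $\nabla\omega(U_{i,t})-\nabla\omega(U_{i-1,t})=Y_{i,t}-Y_{i-1,t}=-\eta_t\tilde\nabla f_i(U_{i-1,t})$, so the identity yields
\begin{align*}
D(Z,U_{i,t})=D(Z,U_{i-1,t})-D(U_{i,t},U_{i-1,t})+\eta_t\tr{\tilde\nabla f_i(U_{i-1,t})^T(Z-U_{i,t})}.
\end{align*}
I would split the last term at $U_{i-1,t}$, bound $\tr{\tilde\nabla f_i(U_{i-1,t})^T(Z-U_{i-1,t})}\le f_i(Z)-f_i(U_{i-1,t})$ by convexity, and absorb the residual using the tools from Section \ref{sec:pre}: strong convexity of the quantum entropy (Lemma \ref{lm:strong}) gives $D(U_{i,t},U_{i-1,t})\ge\tfrac12\|U_{i,t}-U_{i-1,t}\|_{\text{tr}}^2$, while spectral/trace-norm duality gives $\tr{\tilde\nabla f_i(U_{i-1,t})^T(U_{i-1,t}-U_{i,t})}\le\|\tilde\nabla f_i(U_{i-1,t})\|_2\,\|U_{i-1,t}-U_{i,t}\|_{\text{tr}}$. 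Completing the square and using $\|\tilde\nabla f_i(U_{i-1,t})\|_2\le L_{f_i}$ leaves the clean estimate
\begin{align*}
D(Z,U_{i,t})\le D(Z,U_{i-1,t})+\eta_t\big(f_i(Z)-f_i(U_{i-1,t})\big)+\tfrac12\eta_t^2L_{f_i}^2.
\end{align*}

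Summing over $i=1,\dots,m$ with $Z=X^*$ telescopes the inner divergences ($U_{0,t}=X_t$, $U_{m,t}=X_{t+1}$) but leaves $\sum_i f_i(U_{i-1,t})$ rather than $f(X_t)$, and I expect this incremental mismatch to be the main obstacle. To control it I would again invoke strong convexity of $\omega$, which forces $\|U_{i,t}-U_{i-1,t}\|_{\text{tr}}\le\|Y_{i,t}-Y_{i-1,t}\|_2=\eta_t\|\tilde\nabla f_i(U_{i-1,t})\|_2\le\eta_t L_{f_i}$; hence the inner drift $\|U_{i-1,t}-X_t\|_{\text{tr}}$ is $O(\eta_t)$, and Lipschitz continuity of each $f_i$ makes $\big|\sum_i(f_i(U_{i-1,t})-f_i(X_t))\big|=O(\eta_t^2)$. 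Folding this into the recursion gives, for a constant $\tilde C$ depending only on the $L_{f_i}$,
\begin{align*}
D(X^*,X_{t+1})\le D(X^*,X_t)-\eta_t\big(f(X_t)-f^*\big)+\tilde C\eta_t^2.
\end{align*}

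Finally I would telescope over $t=0,\dots,T-1$, drop the nonnegative term $D(X^*,X_T)$, and divide by $\sum_{t=0}^{T-1}\eta_t$ to obtain
\begin{align*}
f_T^{\min}-f^*\le\frac{D(X^*,X_0)}{\sum_{t=0}^{T-1}\eta_t}+\tilde C\,\frac{\sum_{t=0}^{T-1}\eta_t^2}{\sum_{t=0}^{T-1}\eta_t}.
\end{align*}
The hypothesis forces $\sum_t\eta_t=\infty$ — otherwise, with positive summable $\eta_t$ the ratio would tend to a strictly positive limit — so the first term vanishes while the second vanishes by assumption. Since $f_T^{\min}\ge f^*$ for every $T$ and the right-hand side tends to $0$, I conclude $f_T^{\min}\to f^*$.
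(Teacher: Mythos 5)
Your proposal is correct, and its skeleton coincides with the paper's own proof: the divergence $D(X^*,\cdot)$ as a Lyapunov function, the per-update identity $D(Z,U_{i,t})=D(Z,U_{i-1,t})-D(U_{i,t},U_{i-1,t})+\eta_t\tr{\tilde\nabla f_i(U_{i-1,t})^T(Z-U_{i,t})}$ (which the paper derives by direct expansion rather than by quoting the three-point identity), the split of the inner product at $U_{i-1,t}$ followed by the subgradient inequality, control of the incremental mismatch $\sum_i\big(f_i(X_t)-f_i(U_{i-1,t})\big)$ via nonexpansiveness of $\nabla\omega^*$ and Lipschitz continuity of the $f_i$'s, and a double telescoping followed by division by $\sum_t\eta_t$. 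The one substantive local difference is how the residual $\eta_t\tr{\tilde\nabla f_i(U_{i-1,t})^T(U_{i-1,t}-U_{i,t})}$ is absorbed: you pair trace/spectral-norm duality with Young's inequality against the strong-convexity term $\tfrac{1}{2}\|U_{i,t}-U_{i-1,t}\|_{\text{tr}}^2$ from Lemma \ref{lm:strong}, which yields the dimension-free constant $\tfrac{1}{2}\eta_t^2L_{f_i}^2$; the paper instead invokes \eqref{eq:fenchel2} and pays $n\eta_t^2L_{f_i}^2$, which is exactly where the factor $(n+1)$ in \eqref{eq:con-rate} comes from. Note moreover that the derivation of \eqref{eq:fenchel2} uses positive semidefiniteness of the matrix $A$, a property that the difference $U_{i-1,t}-U_{i,t}$ of two density matrices need not enjoy, so your route is both tighter and cleaner at that step. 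Two small remarks: the mismatch sum $\big|\sum_i(f_i(U_{i-1,t})-f_i(X_t))\big|$ is $O(\eta_t)$, not $O(\eta_t^2)$ as you wrote --- it becomes $O(\eta_t^2)$ only after being multiplied by $\eta_t$ inside the recursion, and your displayed recursion $D(X^*,X_{t+1})\le D(X^*,X_t)-\eta_t(f(X_t)-f^*)+\tilde C\eta_t^2$ is exactly what that multiplication yields, so nothing breaks. Finally, you supply two justifications the paper leaves implicit: that the hypothesis forces $\sum_t\eta_t=\infty$ (since summable positive $\eta_t$ would drive the ratio to a strictly positive limit), and that compactness of $\mathcal B$ guarantees a minimizer $X^*$ exists.
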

\begin{proof}
Let $Y \in \cap_{i=1}^m \text{dom}f_i$ be fixed. For every $i=1, \cdots,m$ and every $t\geq 0$ we have
\begin{align*}
D(Y,U_{i,t})&=\omega(Y)-\omega(U_{i,t})-\tr{\tilde\nabla^T \omega(U_{i,t})(Y-U_{i,t})}\\
&=\omega(Y)-\omega(U_{i,t})-\tr{(Y_{i,t})^T(Y-U_{i,t})}\\
&=\omega(Y)-\omega(U_{i,t})-\tr{(Y_{i-1,t}-\eta_t \tilde\nabla f_i( U_{i-1,t}))^T(Y-U_{i,t})}\\
&=\omega(Y)-\omega(U_{i,t})-\tr{(Y_{i-1,t})^T(Y-U_{i,t})}+\eta_t\tr{\tilde\nabla^T f_i( U_{i-1,t})(Y-U_{i,t})}\\
&=\omega(Y)-\omega(U_{i,t})-\tr{\tilde\nabla^T\omega (U_{i-1,t})(Y-U_{i,t})}+\eta_t\tr{\tilde\nabla^T f_i( U_{i-1,t})(Y-U_{i,t})},
\end{align*}
where we used relation \eqref{eq:twosided} in the second and last equality and we applied the update rule of the Algorithm \ref{alg3} in the third equality. By adding and subtracting the term $\omega(U_{i-1,t})+\tilde\nabla^T\omega (U_{i-1,t}) U_{i-1,t}$, we get
\begin{align*}
D(Y,U_{i,t})&=\omega(Y)-\omega(U_{i-1,t})-\tr{\tilde\nabla^T\omega(U_{i-1,t})(Y-U_{i-1,t})}
+\omega(U_{i-1,t})-\omega(U_{i,t})\\&-\tr{\tilde\nabla^T\omega(U_{i-1,t})(U_{i-1,t}-U_{i,t})}+\tr{\eta_t \tilde\nabla^T f_i( U_{i-1,t})(Y-U_{i,t})}\\
&=D(Y,U_{i-1,t})-D(U_{i,t},U_{i-1,t})+\eta_t \tr{\tilde\nabla^T f_i( U_{i-1,t})(Y-U_{i,t})}.
\end{align*}
By adding and subtracting the term $\eta_t \tr{\tilde\nabla^T f_i(U_{i-1,t})U_{i-1,t}}$, we have
\begin{align}
\label{eq:5}
&D(Y,U_{i,t})=D(Y,U_{i-1,t})-D(U_{i,t},U_{i-1,t})+\eta_t \tr{\tilde\nabla^T f_i( U_{i-1,t})(Y-U_{i-1,t})} \nonumber\\
&-\eta_t \tr{\tilde\nabla^T f_i( U_{i-1,t})(U_{i,t}-U_{i-1,t})} \leq D(Y,U_{i-1,t})-D(U_{i,t},U_{i-1,t})\nonumber\\&+ \eta_t \left( f_i(Y)-f_i(U_{i-1,t})\right)+\eta_t \tr{\tilde\nabla^T f_i( U_{i-1,t})(U_{i-1,t}-U_{i,t})},
\end{align}
where we used the definition of subgradient in the last relation. Using relation \eqref{eq:fenchel2},
\begin{align}
\label{eq:decop}
\eta_t \tr{\tilde\nabla^T f_i( U_{i-1,t})(U_{i-1,t}-U_{i,t})}\leq n\eta_t^2 \|{\tilde\nabla^T f_i( U_{i-1,t})}\|_2^2+\frac{1}{4}(\tr{U_{i-1,t}-U_{i,t}})^2.
\end{align}
Plugging \eqref{eq:decop} into \eqref{eq:5}, we get
\begin{align*}
D(Y,U_{i,t})& \leq D(Y,U_{i-1,t})-D(U_{i,t},U_{i-1,t})+ \eta_t (f_i(Y)-f_i(U_{i-1,t}))
\\
&+n\eta_t^2\|{\tilde\nabla^T f_i( U_{i-1,t})}\|_2^2+\frac{1}{4}(\tr{U_{i-1,t}-U_{i,t}})^2.
\end{align*}
Using that $\omega$ is 1-strongly convex, Lemma \ref{lm:strong} and definition of Bregman divergence, we get
\begin{align*}
D(Y,U_{i,t})& \leq D(Y,U_{i-1,t})-D(U_{i,t},U_{i-1,t})+ \eta_t \left(f_i(Y)-f_i(U_{i-1,t})\right)
+n\eta_t^2\|{\tilde\nabla^T f_i( U_{i-1,t})}\|_2^2\\
&+\frac{1}{2} D(U_{i,t},U_{i-1,t})=D(Y,U_{i-1,t})+\eta_t \left(f_i(Y)-f_i(U_{i-1,t})\right)+n\eta_t^2\|{\tilde\nabla^T f_i( U_{i-1,t})}\|_2^2\\
& -\frac{1}{2}D(U_{i,t},U_{i-1,t}).
\end{align*}
By Remark \ref{rem:boundsub}, we have for any $i=1,\cdots, m$ and $t\geq 0$
\begin{align*}
D(Y,U_{i,t})&\leq D(Y,U_{i-1,t})+ \eta_t \left(f_i(Y)-f_i(U_{i-1,t})\right)+n\eta_t^2{L_{f_i}}^2-\frac{1}{2}D(U_{i,t},U_{i-1,t}).
\end{align*}
Summing the above inequality over $i=1,\cdots, m$, we obtain
\begin{align*}
D(Y,U_{m,t})&\leq D(Y,U_{0,t})+ \eta_t \sum_{i=1}^m \left(f_i(Y)-f_i(U_{i-1,t})\right)+n\eta_t^2\sum_{i=1}^m{L_{f_i}}^2-\frac{1}{2}\sum_{i=1}^mD(U_{i,t},U_{i-1,t}).
\end{align*}
Note that $U_{0,t}=X_t$. By adding and subtracting the term $\eta_t f(X_t)$, we have
\begin{align}
\label{eq:12}
D(Y,U_{m,t})&\leq D(Y,X_{t})+ \eta_t \sum_{i=1}^m\left(f_i(Y)-f_i(X_t)\right)+\eta_t \sum_{i=1}^m \left(f_i(X_t)-f_i(U_{i-1,t})\right)\nonumber\\
&+n\eta_t^2\sum_{i=1}^m{L_{f_i}}^2-\frac{1}{2}\sum_{i=1}^mD(U_{i,t},U_{i-1,t}). 
\end{align}
By Remark \ref{rem:boundsub}, we have $f_i$ is continuous over $\mathcal B$ with parameter $L_{f_i}>0$, i.e., $|f_i(A)-f_i(B)| \leq L_{f_i}\|A-B\|_2$. Therefore, we have
\begin{align*}
&\sum_{i=1}^m \left(f_i(X_t)-f_i(U_{i-1,t})\right)=\sum_{i=2}^m \sum_{j=1}^{i-1}\left(f_i(U_{j-1,t})-f_i(U_{j,t})\right)	\leq \sum_{i=2}^m \sum_{j=1}^{i-1} L_{f_i}\|U_{j-1,t}-U_{j,t}\|_2 \\
&\leq \left(\sum_{l=1}^m L_{f_l}\right) \sum_{i=1}^m \|U_{i-1,t}-U_{i,t}\|_2
=\left(\sum_{l=1}^m L_{f_l}\right) \sum_{i=1}^m \|\nabla \omega^*(Y_{i-1,t})-\nabla \omega^*(Y_{i,t})\|_2 \\
&\leq \left(\sum_{l=1}^m L_{f_l}\right) \sum_{i=1}^m \|Y_{i-1,t}-Y_{i,t}\|_2, 
\end{align*}
where the last inequality follows by Lipschitz continuity of $\nabla \omega^*$. Applying the update rule of the Algorithm \ref{alg3}, we have
\begin{align}
\label{eq:11}
\sum_{i=1}^m \left(f_i(X_t)-f_i(U_{i-1,t})\right)&\leq \left(\sum_{l=1}^m L_{f_l}\right) \sum_{i=}^m \|\eta_t\tilde \nabla f_i(U_{i-1,t})\|_2
\leq  \eta_t \left(\sum_{l=1}^m L_{f_l}\right) \left(\sum_{i=1}^m L_{f_i}\right),
\end{align}
where the last inequality follows by Assumption \ref{ass:boundsub}. Plugging \eqref{eq:11} into \eqref{eq:12}, for any $t\geq 0$
\begin{align*}
D(Y,U_{m,t})&\leq D(Y,X_{t})+ \eta_t \sum_{i=1}^m\left(f_i(Y)-f_i(X_t)\right)+\eta_t^2 \left(\sum_{i=1}^m L_{f_i}\right)^2\nonumber\\
&+n\eta_t^2\sum_{i=1}^m{L_{f_i}}^2-\sum_{i=1}^m\frac{1}{2}D(U_{i,t},U_{i-1,t}). 	
\end{align*}
Since $\sum_{i=1}^m{L_{f_i}}^2 \leq \left(\sum_{i=1}^m L_{f_i}\right)^2$, also $U_{m,t}=X_{t+1}$, and $Y_{m,t}=Y_{0,t+1}$, we get for any $t
\geq 0$ that
\begin{align*}
D(Y,X_{t+1})&\leq D(Y,X_{t})+ \eta_t \sum_{i=1}^m\left(f_i(Y)-f_i(X_t)\right)+\eta_t^2 (n+1)\left(\sum_{i=1}^m L_{f_i}\right)^2, 	
\end{align*}
where we used the fact that $D(U_{i,t},U_{i-1,t}) \geq 0$. Let $Y:=X^*$, summing up the inequality from $t=0$ to $T-1$, where $T\geq 1$ and rearranging the terms, we get
\begin{align*}
D(X^*,X_{T})+ \sum_{t=0}^{T-1}\eta_t \left(\sum_{i=1}^m f_i(X_t)-\sum_{i=1}^m f_i(X^*)\right) &\leq D(X^*,X_{0})+(n+1)\sum_{t=0}^{T-1} \eta_t^2 \left(\sum_{i=1}^m L_{f_i}\right)^2.	
\end{align*}
By definition of $f^{\min}_{T-1}$, we have
\begin{align*}
\sum_{t=0}^{T-1}\eta_t \left(f^{\min}_{T-1}-f^*\right)
&\leq
\sum_{t=0}^{T-1}\eta_t \left(\sum_{i=1}^m f_i(X_t)-\sum_{i=1}^m f_i(X^*)\right) 
\end{align*}
Since $D(X^*,X_{T})\geq 0$, we get
\begin{align}
\label{eq:15}
f^{\min}_{T-1}-f^* &\leq\frac{ D(X^*,X_{0})+(n+1)\left(\sum_{i=1}^m L_{f_i}\right)^2\sum_{t=0}^{T-1}\eta_t^2 }{\sum_{t=0}^{T-1}\eta_t}.	
\end{align}
By assumption, $\lim_{T\rightarrow \infty}\frac{\sum_{t=0}^{T-1}\eta_t^2}{\sum_{t=0}^{T-1}\eta_t}=0$ which implies $\sum_{t=0}^{T-1}\eta_t\rightarrow +\infty$. Therefore,
$f^{\min}_{T-1}-f^* \rightarrow 0,$	
i.e., $f^{\min}_{T-1}$ converges to $f^*$ as $T\rightarrow \infty$.
\end{proof}
Next, we present the convergence rate of the M-MDIS scheme.
\begin{lemma} (Rate of convergence) Consider Problem \eqref{eqn:problem10}. Suppose Assumption \ref{ass:boundsub} holds and let the sequence $\{X_t\}$ be generated by Algorithm \ref{alg3}. Given a fixed $T\geq 1$, let $\eta_t$ be a sequence given by 
\begin{align}
\label{eq:con-step}
		\eta_t=\frac{1}{\sum_{i=1}^m L_{f_i}}\sqrt{\frac{D(X^*,X_{0})}{n+1}}\frac{1}{\sqrt{T}}. 
\end{align}
Then, we have
		\begin{align}
\label{eq:con-rate}
	f^{\min}_{T-1}-f^* \leq 2\left(\sum_{i=1}^m L_{f_i}\right) \sqrt{\frac{D(X^*,X_0)(n+1)}{T}}={\cal O}\left(\frac{1}{\sqrt T}\right).		
\end{align}
\end{lemma}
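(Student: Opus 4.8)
The plan is to leverage the bound already established inside the proof of Theorem~\ref{th1}, namely inequality \eqref{eq:15}, which was derived for \emph{any} positive stepsize sequence $\{\eta_t\}$ and therefore applies verbatim to the constant sequence prescribed in \eqref{eq:con-step}. First I would note that the $\eta_t$ in \eqref{eq:con-step} does not depend on $t$; writing $\eta$ for this common value gives $\sum_{t=0}^{T-1}\eta_t = T\eta$ and $\sum_{t=0}^{T-1}\eta_t^2 = T\eta^2$, so \eqref{eq:15} collapses to
\begin{align*}
f^{\min}_{T-1} - f^* \leq \frac{D(X^*,X_0)}{T\eta} + (n+1)\left(\sum_{i=1}^m L_{f_i}\right)^2 \eta.
\end{align*}

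Next I would observe that the right-hand side splits into one term decreasing in $\eta$ and one term increasing in $\eta$, so it is natural to balance them. Abbreviating $A := D(X^*,X_0)$ and $B := (n+1)\left(\sum_{i=1}^m L_{f_i}\right)^2$, the bound reads $A/(T\eta) + B\eta$, which by the AM--GM inequality is minimized over positive $\eta$ at $\eta = \sqrt{A/(TB)}$, with minimum value $2\sqrt{AB/T}$. A direct algebraic check shows that $\sqrt{A/(TB)}$ is exactly the stepsize \eqref{eq:con-step}; hence the prescribed choice is precisely the minimizer of the derived upper bound, which both explains where \eqref{eq:con-step} comes from and, upon writing out $2\sqrt{AB/T} = 2\left(\sum_{i=1}^m L_{f_i}\right)\sqrt{D(X^*,X_0)(n+1)/T}$, delivers the claimed estimate \eqref{eq:con-rate}.

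Finally I would record that this quantity is $\mathcal{O}(1/\sqrt{T})$, since $A$, $B$, $n$, and the constants $L_{f_i}$ are all fixed and independent of $T$. There is no genuine obstacle here beyond the bookkeeping already carried out in Theorem~\ref{th1}; the only points deserving care are to confirm that \eqref{eq:15} was obtained under no hypothesis on $\{\eta_t\}$ other than positivity (so that substituting a constant, $T$-dependent stepsize is legitimate), and to verify the algebraic identity that the prescribed $\eta$ coincides with $\sqrt{A/(TB)}$ and hence realizes the balanced minimum $2\sqrt{AB/T}$.
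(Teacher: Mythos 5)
Your proposal is correct and follows essentially the same route as the paper: both specialize inequality \eqref{eq:15} to a constant stepsize, minimize the resulting bound $D(X^*,X_0)/(T\eta) + (n+1)\left(\sum_{i=1}^m L_{f_i}\right)^2\eta$ over $\eta>0$, and observe that the minimizer is exactly \eqref{eq:con-step}, yielding \eqref{eq:con-rate}. Your explicit AM--GM balancing and the check that \eqref{eq:15} requires only positivity of the stepsizes are just slightly more detailed renderings of the paper's argument.
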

\begin{proof}
Assume that the number of iterations $T$ is fixed and the stepsize is constant, i.e, $\eta_t=\eta$ for all $t \geq 0$, then it follows by \eqref{eq:15} that
\begin{align}
\label{eq:bala}
	f^{\min}_{T-1}-f^* \leq\frac{ D(X^*,X_{0})+(n+1)\left(\sum_{i=1}^m L_{f_i}\right)^2\sum_{t=0}^{T-1}\eta^2 }{\sum_{t=0}^{T-1}\eta}.
\end{align}
Then, by minimizing the right-hand side of the above inequality over $\eta>0$, we obtain the constant stepsize \eqref{eq:con-step} for all $t \geq 0$. By plugging \eqref{eq:con-step} into \eqref{eq:bala}, we obtain the rate of the convergence of \eqref{eq:con-rate} for $T\geq 1$. 
\end{proof}
\section{Stochastic non-cooperative Nash games}\label{sec:algorithm}
In this section, we present the A-M-SMD scheme for solving CSVI \eqref{eq:VI}. 
\nm{Algorithm \ref{alg} presents the outline of the A-M-SMD method. At each iteration $t$ and for any user $i$, first, using an oracle, a realization of the stochastic mapping $F$ is generated at $X_{t}$, denoted by $\Phi_i(  X_t, \xi_t)$. Next, a matrix $Y_{i,t}$ is updated using \eqref{eq:y}. Here $\eta_t$ is a non-increasing step-size sequence. Then, $Y_{i,t}$ will be projected onto the set $\mathcal X_i$ defined by \eqref{eq:setx} using the closed-form solution \eqref{eq:x}. It should be noted that the update rule \eqref{eq:x} is obtained by applying Lemma \ref{lemma:conjugatevon}. Then the averaged sequence $\overline{X}_{i,t+1}$ is generated using relations $\eqref{eq:gamma}$.}
Next, we state the main assumptions. Let us define the stochastic error at iteration $t$ as
\mn{
\begin{align}
\label{eq:z-definition}
	   Z_{i,t} \triangleq \Phi_i(  X_t, \xi_t)- F_i(  X_t) \quad \text{for all}  \quad t\geq 0, \quad \text{and for all} \quad i=1,\ldots,N.
\end{align}
Let $\mathcal F_t$ denote the history of the algorithm up to time $t$, i.e., $\mathcal F_t=\{  X_0, \xi_0,\ldots,\xi_{t-1}\}$ for $t\geq 1$ and $\mathcal F_0=\{  X_0\}$. 
\begin{assumption} Let the following hold:
\label{ass:boundphi}
\begin{itemize}
\item [(a)] The mapping $F(X)=\EX[\Phi(  X_t,\xi_t)]$ is monotone and continuous over the set $\mathbf{\mathcal X}$. 
\item [(b)] The stochastic mapping $\Phi_i(X_t,\xi_t)$ has a finite mean squared error, i.e, there exist scalars $C_i>0$ such that $\EX[\Vert\Phi_i(  X_t,\xi_t)\Vert^2_2|\mathcal F_t] \leq C_i^2$ for all $i=1,\ldots,N$.
\item [(c)] The stochastic noise $Z_{i,t}$ has a zero mean, i.e., $\EX[Z_{i,t}|\mathcal F_t]=\mathbf{0}$ for all $t\geq 0$ and for all $i=1,\ldots,N$.
\end{itemize}
\end{assumption} 
}
\begin{algorithm}
 \caption{Averaging Matrix Stochastic Mirror Descent (A-M-SMD)}
\label{alg}
\begin{algorithmic}
\mn{
     \STATE \textbf{initialization}: Set $  Y_{i,0}:=  I_{n_i}/n_i$, a stepsize $\eta_0 > 0$, $\Gamma_0=\eta_0$, let $  X_{i,0} \in {\mathcal X}_i$ be a random initial matrix, and $\overline{X}_{i,0}= X_{i,0}$. 
      \FOR {$t=0,1,...,T-1$}	
			   \FOR {$i=1,...,N$}	
				\STATE Generate $\xi_t$ as realizations of the random variable $\xi$ and evaluate the mapping $\Phi_i(  X_t, \xi_t)$. Let 
				\begin{align} \label{eq:y}
				&Y_{i,t+1} :=   Y_{i,t} - \eta_{t}  \Phi_i(X_t, \xi_t),\\
				\label{eq:x}
				&X_{i,t+1}:=\displaystyle\frac{\exp(  Y_{i,t+1}+  I_{n_i})}{\tr{\exp(  Y_{i,t+1}+  I_{n_i})}}.
				\end{align}
					\STATE Update $\Gamma_t$ and $\overline{X}_{i,t}$ using the following recursions:
					\begin{align}\label{eq:gamma}
						 &\Gamma_{t+1}:=\Gamma_{t}+\eta_{t+1},~\overline{X}_{i,t+1}:=\frac{\Gamma_t\overline{X}_{i,t}+\eta_{t+1}{X}_{i,t+1}}{\Gamma_{t+1}}.
					\end{align}
\ENDFOR 
\ENDFOR 
\STATE Return $\overline{X}_{T}$.
}
\end{algorithmic}
\end{algorithm}
\subsection{Convergence and Rate Analysis}
\label{sec:convergence}
\nm{In this section, our interest lies in analyzing the convergence and deriving a rate statement for the sequence generated by the A-M-SMD method.}
\nm{Note that a solution of VI(${\mathcal X}, F$) is also referred to as a strong solution. The convergence analysis is carried out by a gap function $G$ defined subsequently. The definition of $G$ is closely tied with a weak solution which is a counterpart of a strong solution. Next, we define a weak solution.}
\begin{defn}[Weak solution]\label{def:stable} 
The matrix ${X}^*_w \in \mathcal X$ is called a weak solution to VI($\mathbf{\mathcal X}, F$) if it satisfies 
$\tr{(  {X}-  {X}^*_w)^T F(  X)} \geq 0$, for all $X \in \mathbf{\mathcal X}.$
\end{defn}
We let $\mathcal X^\star_w$ and $\mathcal X^*$ denote the set of weak solutions and strong solutions to VI($\mathbf{\mathcal X}, F$), respectively.
\begin{rem}  Under Assumption \ref{ass:boundphi}(a), when the mapping $F$ is monotone, any strong solution of Problem \eqref{eq:VI} is a weak solution, i.e., $\mathcal X^* \subseteq \mathcal X^\star_w$. 
From continuity of $F$ in Assumption \ref{ass:boundphi}(a), the converse is also true meaning that a weak solution is a strong solution. Moreover, for a monotone mapping $F$ on a convex compact set e.g., $\mathcal X$, a weak solution always exists (\cite{juditsky2011solving}). 
\end{rem}

\nm{Unlike optimization problems where the objective function provides a metric for distinguishing solutions, there is no immediate analog in VI problems. However, different variants of gap function have been used in the analysis of variational inequalities (cf. Chapter 10 in \cite{facchinei02finite}). Here we use the following gap function associated with a VI problem to derive a convergence rate.}
\begin{defn} [$G$ function] Define the following function $G: \mathcal X \to \mathbb R$ as
\label{def:gap}
\begin{align*}
	G(  {X})= \underset {  Z \in \mathcal X} {\sup}\ \tr{(  {X}-  {Z})^T F(  Z)}, \quad \text{for all}~   {X} \in  \mathcal X.
\end{align*}
\end{defn}
The next lemma provides some properties of the $G$ function.
\begin{lemma}
\label{lm:gap properties2}
The function $G({X})$ given by Definition \ref{def:gap} is a well-defined gap function, i.e, $(i)$ $G(  {X})\geq 0$ for all $  X \in \mathcal X$; $(ii)$ $  X^*_w$ is a weak solution to Problem \eqref{eq:VI} iff $G(  {X}^*_w)=0$.
\end{lemma}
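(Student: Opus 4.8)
The plan is to verify the two defining properties directly from Definition \ref{def:gap}, using only the compactness of $\mathcal X$ and the continuity of $F$ (Assumption \ref{ass:boundphi}(a)) to guarantee that the supremum defining $G$ is finite, so that $G$ is indeed a well-defined function on $\mathcal X$.

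For part $(i)$, I would observe that the point $Z=X$ is always admissible in the supremum defining $G(X)$. Since $\tr{(X-X)^T F(X)}=\tr{\mathbf 0}=0$, the supremum is taken over a set of values containing $0$, and therefore $G(X)\geq 0$ for every $X\in\mathcal X$.

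For part $(ii)$, the key observation is that the bound $G(X^*_w)\leq 0$ is equivalent, by Definition \ref{def:gap}, to $\tr{(X^*_w-Z)^T F(Z)}\leq 0$ holding for every $Z\in\mathcal X$; rewriting this as $\tr{(Z-X^*_w)^T F(Z)}\geq 0$ for all $Z\in\mathcal X$ is exactly the weak-solution condition of Definition \ref{def:stable} with $X$ relabelled as $Z$. I would then argue both implications. If $X^*_w$ is a weak solution, the weak-solution inequality yields $G(X^*_w)\leq 0$, while part $(i)$ gives $G(X^*_w)\geq 0$, and together these force $G(X^*_w)=0$. Conversely, if $G(X^*_w)=0$, then in particular $\tr{(X^*_w-Z)^T F(Z)}\leq 0$ for all $Z\in\mathcal X$, which is precisely the weak-solution condition, so $X^*_w\in\mathcal X^\star_w$.

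There is no serious obstacle in this argument; the only point that requires care is the bookkeeping of the sign and the transpose when passing between the supremum inequality and Definition \ref{def:stable}, together with the remark that the choice $Z=X$ in the supremum is exactly what pins down the nonnegativity in part $(i)$.
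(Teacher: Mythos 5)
Your proposal is correct and follows essentially the same route as the paper's own proof: part $(i)$ by taking $Z=X$ in the supremum, and part $(ii)$ by observing that $G(X^*_w)\leq 0$ is equivalent to the weak-solution inequality (so the forward direction combines this with $(i)$, and the converse reads it off directly). The only difference is your explicit remark that compactness of $\mathcal X$ and continuity of $F$ make the supremum finite, a point the paper leaves implicit.
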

\begin{proof}
$(i)$ For an arbitrary $  X \in \mathcal X$, we have
\begin{align*}
		G(  {X})= \underset {  Z \in \mathcal X} {\sup}\ \tr{(  {X}-  {Z})^T F(  Z)} \geq \tr{(  {X}-  {A})^T F(  A)},
\end{align*}
for all ${A} \in  \mathcal X$. For $A=X$, the above inequality suggests that $G(  {X})\geq \tr{({X}-{X})^TF(  X)}=0$ implying that the function $G(  {X})$ is nonnegative for all $  {X} \in  \mathcal X$. \\
$(ii)$ Assume $  X^*_w$ is a weak solution. By Definition \ref{def:stable}, $\tr{(  {X}^*_w-  {X})^TF(  X)} \leq 0$, for all $X \in \mathbf{\mathcal X}$ which implies 
$G(  {X}^*_w)= \underset {  X \in \mathcal X} {\sup}\ \tr{(  {X}^*_w-  {X})^TF(  X)}	 \leq 0$.
On the other hand, from Lemma \ref{lm:gap properties2}$(i)$, we get $G(  {X}^*_w) \geq 0$. We conclude that $G(  {X}^*_w) = 0$ for any weak solution ${X}^*_w$. 
Conversely, assume that there exists an ${X}$ such that $G(  {X}) = 0$. Therefore, $\underset {  Z \in \mathcal X} {\sup}\ \tr{(X-Z)^TF(  Z)}=0$ which implies $\tr{(  {Z}-  {X})^TF(  Z)} \geq 0$ for all $  Z \in \mathcal X$. Therefore, ${X}$ is a weak solution. 	
   \end{proof}
The proof of the following lemma can be found in Appendix.
\begin{lemma} 
\label{lemma:average}
Assume the sequence $\eta_t$ is non-increasing and the sequence $\overline{X}_{i,t}$ is given by the recursive rule \eqref{eq:gamma} where $\Gamma_0=\eta_0$ and $\overline{X}_{i,0}={X}_{i,0}$. Then, 
\begin{align}
\label{eq:avg}	
\overline{X}_{i,t}=\sum_{k=0}^{t} \left(\frac{\eta_k}{\sum_{k'=0}^{t} \eta_{k'}}\right)  X_{i,k}\quad  \text{for any} ~ t\geq 0.
\end{align}
\end{lemma}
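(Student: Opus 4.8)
The plan is to prove this by induction on $t$, after first isolating an auxiliary identity for $\Gamma_t$. The key observation is that the denominator $\sum_{k'=0}^{t}\eta_{k'}$ appearing in the claimed closed form is precisely $\Gamma_t$, so the whole statement reduces to unwinding the recursion \eqref{eq:gamma}. I would therefore begin by establishing, via a short induction, that
\begin{align*}
\Gamma_t = \sum_{k=0}^{t}\eta_k \quad \text{for all } t \geq 0.
\end{align*}
The base case holds since $\Gamma_0 = \eta_0 = \sum_{k=0}^{0}\eta_k$, and the inductive step is immediate from $\Gamma_{t+1} = \Gamma_t + \eta_{t+1}$. With this in hand, the target denominator and $\Gamma_t$ may be used interchangeably throughout.

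Next I would prove the main identity by induction on $t$. For the base case $t=0$, the proposed formula evaluates to $\frac{\eta_0}{\eta_0}X_{i,0} = X_{i,0}$, which matches the initialization $\overline{X}_{i,0} = X_{i,0}$. For the inductive step, I would assume that $\overline{X}_{i,t} = \frac{1}{\Gamma_t}\sum_{k=0}^{t}\eta_k X_{i,k}$ and substitute this into the update rule \eqref{eq:gamma}:
\begin{align*}
\overline{X}_{i,t+1} = \frac{\Gamma_t \overline{X}_{i,t} + \eta_{t+1}X_{i,t+1}}{\Gamma_{t+1}} = \frac{\sum_{k=0}^{t}\eta_k X_{i,k} + \eta_{t+1}X_{i,t+1}}{\Gamma_{t+1}} = \frac{1}{\Gamma_{t+1}}\sum_{k=0}^{t+1}\eta_k X_{i,k}.
\end{align*}
The crucial simplification here is that multiplying the inductive hypothesis by $\Gamma_t$ cancels the denominator exactly, so the weighted sum telescopes into the extended sum with one more term. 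Invoking the auxiliary identity $\Gamma_{t+1} = \sum_{k'=0}^{t+1}\eta_{k'}$ then yields the claimed form for index $t+1$, closing the induction.

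There is no substantive obstacle in this argument; it is a purely algebraic verification, and the non-increasing hypothesis on $\{\eta_t\}$ is not even needed for the identity itself (it is presumably carried along because subsequent rate lemmas rely on it). The only point requiring minor care is bookkeeping of the summation indices and confirming that the $\Gamma_t$ factor cancels cleanly against the reciprocal in the inductive hypothesis, so that no stray normalization constant is introduced. Once the $\Gamma_t = \sum_{k=0}^{t}\eta_k$ identity is in place, the rest follows mechanically.
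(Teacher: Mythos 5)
Your proposal is correct and follows essentially the same route as the paper's own proof: induction on $t$, using the identity $\Gamma_t=\sum_{k'=0}^{t}\eta_{k'}$ to cancel the denominator in the recursion \eqref{eq:gamma}. The only cosmetic difference is that you isolate the $\Gamma_t$ identity as a separate preliminary induction, whereas the paper folds it into the main inductive step; both are equally valid, and you correctly note that the non-increasing hypothesis on $\{\eta_t\}$ plays no role in the identity.
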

\mn{
Throughout, we use the notion of Fenchel coupling (\cite{mertikopoulos2016learning2}):
\begin{equation}
\label{eq:fenchel}
H_i(  {Q_i},  {Y_i})\triangleq\omega_i(  {Q_i})+\omega_i^*(  {Y_i})-\tr{{Q_i}^T  {Y_i}},
\end{equation}
 which provides a proximity measure between $  {Q_i}$ and $\nabla \omega_i^*(  {Y_i})$ and is equal to the associated Bregman divergence between $  {Q}$ and $\nabla \omega_i^*(  {Y_i})$. \nm{We also make use of the following Lemma which is proved in Appendix.}  
\begin{lemma} (\cite{mertikopoulos2017distributed})
\label{pre:smoothstrong}
Let $\mathcal X_i$ be given by \eqref{eq:setx}. For all matrices $X_i\in \mathcal X_i$ and for all ${Y_i},{Z_i} \in \mathbb S_{n_i}$, the following holds
\begin{equation}  
\label{eq:smoothstrong}
H_i(  {X_i},  {Y_i}+  {Z_i})\leq H_i(  {X_i},  {Y_i})+ \tr{  {Z_i}^T(\nabla \omega_i^*(  {Y_i})-  {X_i})}+\Vert   Z_i\Vert^2_2.
\end{equation}
\end{lemma}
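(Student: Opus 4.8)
The plan is to strip off the $X_i$-dependent part of the Fenchel coupling and reduce the claim to a pure smoothness (descent-lemma) inequality for the conjugate $\omega_i^*$. Writing out the definition \eqref{eq:fenchel} at the two arguments and subtracting, the terms $\omega_i(X_i)$ cancel and $\tr{X_i^T(Y_i+Z_i)}-\tr{X_i^T Y_i}=\tr{X_i^T Z_i}=\tr{Z_i^T X_i}$, so that
\begin{align*}
H_i(X_i,Y_i+Z_i)-H_i(X_i,Y_i)=\omega_i^*(Y_i+Z_i)-\omega_i^*(Y_i)-\tr{Z_i^T X_i}.
\end{align*}
Matching this against the target right-hand side $\tr{Z_i^T(\nabla\omega_i^*(Y_i)-X_i)}+\|Z_i\|_2^2$, the $-\tr{Z_i^T X_i}$ terms cancel, so (noting $X_i\in\mathcal X_i$ only matters to keep $\omega_i(X_i)$ finite) the lemma is equivalent to the single inequality $\omega_i^*(Y_i+Z_i)\le \omega_i^*(Y_i)+\tr{Z_i^T\nabla\omega_i^*(Y_i)}+\|Z_i\|_2^2$, which no longer involves $X_i$.

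To establish this smoothness inequality I would set $g(W)\triangleq\log\tr{\exp(W)}$, so that $\omega_i^*(Y)=g(Y+I_{n_i})$ and $\nabla\omega_i^*(Y)=\rho_{Y+I_{n_i}}$ with $\rho_W\triangleq \exp(W)/\tr{\exp(W)}$ by Lemma \ref{lemma:conjugatevon} and \eqref{eq:gradient-omega-star}. Since $g$ is analytic, a second-order Taylor expansion with integral remainder along $W\mapsto W+sZ$ reduces everything to a bound on the Hessian quadratic form $D^2 g(W)[Z,Z]$. Differentiating through the trace and using the Duhamel (Dyson) formula $\tfrac{d}{dt}\exp(W+tZ)\big|_{t=0}=\int_0^1 \exp(sW)Z\exp((1-s)W)\,ds$, I expect to obtain $Dg(W)[Z]=\tr{\rho_W Z}$ and
\begin{align*}
D^2 g(W)[Z,Z]=\int_0^1 \tr{\rho_W^{\,s}\,Z\,\rho_W^{\,1-s}\,Z}\,ds-\left(\tr{\rho_W Z}\right)^2 .
\end{align*}

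The key step, and the main obstacle, is to bound this Hessian uniformly in $W$. I would drop the nonnegative term $(\tr{\rho_W Z})^2$, then diagonalize $\rho_W=\sum_k \lambda_k|k\rangle\langle k|$ and evaluate the $s$-integral entrywise: $\int_0^1\lambda_k^{\,s}\lambda_l^{\,1-s}\,ds$ equals the logarithmic mean $L(\lambda_k,\lambda_l)=(\lambda_k-\lambda_l)/(\log\lambda_k-\log\lambda_l)$. Applying the arithmetic–logarithmic mean inequality $L(a,b)\le (a+b)/2$ and recombining the sums $\sum_{k,l}\lambda_k|Z_{kl}|^2=\sum_{k,l}\lambda_l|Z_{kl}|^2=\tr{\rho_W Z^2}$ should yield $D^2 g(W)[Z,Z]\le \tr{\rho_W Z^2}\le \|Z\|_2^2$, where the last bound uses $Z^2\preceq\|Z\|_2^2 I$ and $\tr{\rho_W}=1$. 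Plugging $D^2 g\le \|Z\|_2^2$ into the integral remainder gives a remainder of at most $\tfrac12\|Z\|_2^2\le \|Z\|_2^2$, which is exactly the smoothness inequality above and hence the lemma. In the commuting (diagonal) special case this whole estimate collapses to the statement that the Hessian is the variance of $Z$ under the probability weights $\lambda_k$, bounded by $\|Z\|_2^2$; the genuinely matrix-analytic content is the noncommutative logarithmic-mean computation, which is where I expect the effort to concentrate.
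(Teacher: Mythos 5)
Your proof is correct, and its key step takes a genuinely different route from the paper's. Both arguments begin identically: expanding the Fenchel coupling \eqref{eq:fenchel} and cancelling the $X_i$-dependent terms reduces the lemma to the single smoothness inequality $\omega_i^*(Y_i+Z_i)\le\omega_i^*(Y_i)+\tr{Z_i^T\nabla\omega_i^*(Y_i)}+\Vert Z_i\Vert_2^2$. The paper dispatches this in one line by invoking the duality between strong convexity and strong smoothness (\cite{kakade2009duality}): since the quantum entropy is 1-strongly convex w.r.t.\ the trace norm (Lemma \ref{lm:strong}, imported from \cite{yu2013strong}), its conjugate is 1-strongly smooth w.r.t.\ the dual (spectral) norm. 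You instead prove the smoothness directly, bounding the Hessian of $g(W)=\log\tr{\exp(W)}$ via Duhamel's formula and the logarithmic-mean/arithmetic-mean inequality to get $D^2g(W)[Z,Z]\le\tr{\rho_W Z^2}\le\Vert Z\Vert_2^2$, then integrating the Taylor remainder. Your computations check out: the Hessian formula is right, the identity $\int_0^1\lambda_k^s\lambda_l^{1-s}\,ds$ being the logarithmic mean is right, the recombination $\sum_{k,l}\lambda_k|Z_{kl}|^2=\tr{\rho_W Z^2}$ uses the symmetry of $Z_i$ (which holds since $Z_i\in\mathbb S_{n_i}$), and your bound is uniform in $W$, which is exactly what the integral remainder requires. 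What each approach buys: the paper's proof is two lines but rests on two imported theorems; yours is self-contained, yields the sharper constant $\tfrac12\Vert Z_i\Vert_2^2$, and makes visible where the modulus comes from (log-mean $\le$ arithmetic mean together with $\tr{\rho_W}=1$) --- in effect you re-derive, on the conjugate side, the content of the strong-convexity result that the paper takes as given.
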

Next, we develop an error bound for the G function given by Definition \ref{def:gap}. 
\begin{lemma}
\label{lemma:convergence}
Consider Problem \eqref{eq:VI}. Let $  X_i \in \mathcal X_i$ and the sequence $\{\overline{  X}_t\}$ be generated by A-M-SMD algorithm. Suppose Assumption \ref{ass:boundphi} holds. Then, for any $T \geq 1$, 
\begin{align}  
\label{eq:bound-1}
 \EX[G(\overline{  X}_T)] &\leq \frac{2}{\sum_{t=0}^{T-1}\eta_t}\left(\sum_{i=1}^{N}\log(n_i+1)+\sum\nolimits_{t=0}^{T-1}\eta_t^{2} \sum_{i=1}^{N}C_i^2\right).
\end{align}
\end{lemma}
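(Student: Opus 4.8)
The plan is to run the standard dual-averaging energy argument blockwise, but with a \emph{dual} gap function and an auxiliary dual trajectory that decouples the comparator from the stochastic noise. First I would apply the Fenchel-coupling inequality of Lemma~\ref{pre:smoothstrong} to the update $Y_{i,t+1}=Y_{i,t}-\eta_t\Phi_i(X_t,\xi_t)$, using $\nabla\omega_i^*(Y_{i,t})=X_{i,t}$, to get for every block $i$, every $t$, and every comparator $X_i\in\mathcal X_i$,
\[
\eta_t\tr{\Phi_i(X_t,\xi_t)^T(X_{i,t}-X_i)}\le H_i(X_i,Y_{i,t})-H_i(X_i,Y_{i,t+1})+\eta_t^2\|\Phi_i(X_t,\xi_t)\|_2^2 .
\]
Writing $\Phi_i=F_i(X_t)+Z_{i,t}$ with $Z_{i,t}$ as in~\eqref{eq:z-definition}, summing over $i=1,\dots,N$ so that the block-diagonal structure gives $\sum_i\tr{F_i(X_t)^T(X_{i,t}-X_i)}=\tr{F(X_t)^T(X_t-X)}$, and summing over $t=0,\dots,T-1$ so the $H_i$ terms telescope and $H_i\ge0$ discards the tail, I obtain a bound on $\sum_t\eta_t\tr{(X_t-X)^TF(X_t)}$ in terms of $\sum_iH_i(X_i,Y_{i,0})$, a quadratic term $\sum_t\eta_t^2\sum_i\|\Phi_i\|_2^2$, and two noise terms $-\sum_t\eta_t\sum_i\tr{Z_{i,t}^TX_{i,t}}$ and $+\sum_t\eta_t\sum_i\tr{Z_{i,t}^TX_i}$.

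Next I would invoke monotonicity (Assumption~\ref{ass:boundphi}(a)), namely $\tr{(X_t-X)^T(F(X_t)-F(X))}\ge0$, to replace $F(X_t)$ by the comparator-evaluated map $F(X)$ on the left. Since $Y\mapsto\tr{(Y-X)^TF(X)}$ is linear in $Y$, Lemma~\ref{lemma:average} identifies $\sum_t\eta_t\tr{(X_t-X)^TF(X)}=\big(\sum_t\eta_t\big)\tr{(\overline X_T-X)^TF(X)}$, and taking $\sup_{X\in\mathcal X}$ on the left produces exactly $\big(\sum_t\eta_t\big)G(\overline X_T)$ by Definition~\ref{def:gap}. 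Using the dual gap function (with $F$ at the comparator) rather than the Minty form is precisely what makes monotonicity applicable here for a merely monotone mapping.

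The main obstacle is that the comparator $X$ still appears on the right both inside $\sum_iH_i(X_i,Y_{i,0})$ and inside the noise term $\sum_t\eta_t\sum_i\tr{Z_{i,t}^TX_i}$; because the worst-case $X$ correlates with the realized noise, one cannot exchange $\sup_X$ and $\EX$ directly. To remove the coupling I would introduce, for each block, an auxiliary dual sequence $V_{i,0}:=Y_{i,0}$, $V_{i,t+1}:=V_{i,t}+\eta_tZ_{i,t}$, apply Lemma~\ref{pre:smoothstrong} once more (now with increment $+\eta_tZ_{i,t}$), telescope, and drop the nonnegative terminal coupling to get
\[
\sum_{t=0}^{T-1}\eta_t\tr{Z_{i,t}^TX_i}\le H_i(X_i,Y_{i,0})+\sum_{t=0}^{T-1}\eta_t\tr{Z_{i,t}^T\nabla\omega_i^*(V_{i,t})}+\sum_{t=0}^{T-1}\eta_t^2\|Z_{i,t}\|_2^2 .
\]
Substituting this for the offending noise term collapses every surviving $X$-dependent quantity into $2\sum_iH_i(X_i,Y_{i,0})$ --- this doubling is the source of the factor $2$ in~\eqref{eq:bound-1} --- while the new terms $\tr{Z_{i,t}^T\nabla\omega_i^*(V_{i,t})}$ and $\tr{Z_{i,t}^TX_{i,t}}$ are $\mathcal F_t$-measurable (both $V_{i,t}$ and $X_{i,t}$ depend only on the past), hence martingale differences, and the extra $\eta_t^2$ pieces are $X$-free. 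Now $\sup_X$ can be taken pointwise, bounding the right-hand side by an $X$-independent random variable.

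Finally I would take expectations. The two families of cross terms vanish by Assumption~\ref{ass:boundphi}(c) and the tower property; the quadratic terms are controlled by Assumption~\ref{ass:boundphi}(b), $\EX[\|\Phi_i\|_2^2\mid\mathcal F_t]\le C_i^2$ and the consequent $\EX[\|Z_{i,t}\|_2^2\mid\mathcal F_t]\le C_i^2$, contributing $2\sum_{t=0}^{T-1}\eta_t^2\sum_iC_i^2$; and the entropy term is bounded uniformly in the comparator by evaluating $H_i(X_i,Y_{i,0})$ at $Y_{i,0}=I_{n_i}/n_i$, where $\omega_i^*(Y_{i,0})=\log n_i+1+1/n_i$ and $\tr{X_i^TY_{i,0}}=1/n_i$ reduce it to $\tr{X_i\log X_i}+\log n_i\le\log n_i\le\log(n_i+1)$. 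Dividing through by $\sum_{t=0}^{T-1}\eta_t$ then yields~\eqref{eq:bound-1}. The only genuinely delicate step is the auxiliary-sequence device that legitimizes interchanging the supremum over the comparator with the expectation over the sample path; the rest is bookkeeping with the Fenchel coupling, telescoping, and the noise assumptions.
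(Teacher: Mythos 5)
Your proposal is correct and follows essentially the same route as the paper's proof: Fenchel coupling (Lemma \ref{pre:smoothstrong}) applied to the dual update, monotonicity to pass from $F(X_t)$ to $F(X)$, Lemma \ref{lemma:average} to introduce $\overline{X}_T$, an auxiliary dual sequence to decouple the comparator from the noise (your $V_{i,t}$ is exactly the paper's $U_{i,t}$, with $\nabla\omega_i^*(V_{i,t})$ playing the role of the paper's primal $V_{i,t}$), the same martingale argument via Assumption \ref{ass:boundphi}(c), and the same entropy bound $H_i(X_i,I_{n_i}/n_i)\leq\log(n_i+1)$ producing the factor $2$. The only differences are organizational (you aggregate over $t$ before invoking the auxiliary sequence, whereas the paper applies it per iteration), so the two arguments coincide.
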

}
\begin{proof}
\mn{
From the definition of $Z_{i,t}$ in relation \eqref{eq:z-definition}, the recursion in the A-M-SMD algorithm can be stated as
\begin{equation}  
\label{eq:lem-Error bounds-1}
  {Y}_{i,t+1}=  {Y}_{i,t}-\eta_t (  F_i(X_t)+  Z_{i,t}).
\end{equation}
Consider \eqref{eq:smoothstrong}. From Algorithm \ref{alg} and \eqref{eq:gradient-omega-star}, we have $X_{i,t}=\nabla\omega_i^*(Y_{i,t})$. Let $Y_i:=Y_{i,t}$ and $Z_i:=-\eta_t (F_i(X_t)+ Z_{i,t})$. From \eqref{eq:lem-Error bounds-1}, we obtain
\begin{align*}  
&H_i(  {X_i},  Y_{i,t+1})\leq H_i(  {X_i},  Y_{i,t})-\eta_t \tr{(  X_{i,t}-  X_i)^T(  F_i(  X_t)+  Z_{i,t})}+\eta_t^2\Vert F_i(  X_t)+ Z_{i,t}\Vert^2_2.
\end{align*}
By adding and subtracting $\eta_t \tr{(X_{i,t}-X_i)^TF_i(X)}$, we get
\begin{align}
\label{eq:mon}  
&H_i(  {X_i},  Y_{i,t+1}) 
\leq H_i(  {X_i},  Y_{i,t})-\eta_t \tr{(  X_{i,t}-  X_i)^T  Z_{i,t}}-\eta_t \tr{(  X_{i,t}-  X_i)^T(  F_i(  X_t)-F_i(  X)}\notag\\&-\eta_t \tr{(  X_{i,t}-  X_i)^T  F_i(  X)}+\eta_t^2\Vert   F_i(  X_t)+  Z_{i,t}\Vert^2_2.
\end{align}  
}
\mn{
Let us define an auxiliary sequence $U_{i,t}$ such that $  U_{i,t+1}\triangleq  U_{i,t}+\eta_t  Z_{i,t}$, where $  U_{i,0} =\mathbf I_{n_i}$ and define $  V_{i,t}\triangleq   \nabla \omega^*_i(  U_{i,t})$. From \eqref{eq:mon}, invoking the definition of $  Z_{i,t}$ and by adding and subtracting $  V_{i,t}$, we obtain
\begin{align}  
\label{eq:13}
&\eta_t \tr{(X_{i,t}-X_i)^TF_i(X)} \leq H(X_i,Y_{i,t})-H_i(X_i,Y_{i,t+1})-\eta_t \tr{(  X_{i,t}-  X_i)^T(  F_i(  X_t)-F_i(  X)}\notag\\
&+\eta_t \tr{(V_{i,t}-X_{i,t})^T Z_{i,t}}+\eta_t \tr{(X_i-V_{i,t})^T Z_{i,t}}+\eta_t^2\Vert \Phi_{i,t}\Vert^2_2,
\end{align}
where for simplicity of notation we use $\Phi_{i,t}$ to denote $\Phi_i({X}_t,\xi_t)$. Then, we estimate the term $\eta_t \tr{(X_i-V_{i,t})^T Z_{i,t}}$. By Lemma \ref{pre:smoothstrong} and setting $  Y_i:=  U_{i,t}$ and $  Z_i:=\eta_t  Z_{i,t}$, we get
}
\mn{
\begin{align*}
\eta_t \tr{(X_i-V_{i,t})^T Z_{i,t}}&\leq H_i({X_i},U_{i,t})-H_i({X_i},U_{i,t+1})+\eta_t^2\Vert Z_{i,t}\Vert^2_2.
\end{align*}
By plugging the above inequality into \eqref{eq:13}, we get
\begin{align*}
&\eta_t \tr{(X_{i,t}-X_i)^T F_i(X)}\leq H_i({X_i},Y_{i,t})-H_i({X_i},Y_{i,t+1})+H_i({X_i},U_{i,t})-H_i({X_i},  U_{i,t+1})\\
	&+\eta_t^2\Vert   Z_{i,t}\Vert^2_2+\eta_t \tr{(  V_{i,t}-  X_{i,t})^T   Z_{i,t}}+\eta_t^2\Vert \Phi_{i,t}\Vert^2_2-\eta_t \tr{(  X_{i,t}-  X_i)^T(  F_i(  X_t)-F_i(  X)}. 
\end{align*}
Let us define $V_t:=\text{diag}\ (V_{1,t},\ldots,V_{N,t})$. By summing the above inequality form $i=1$ to $N$, we get
\begin{align*} 
&\eta_t \tr{(X_t-X)^T F(X)}\leq \sum\nolimits_{i=1}^{N} H_i({X_i},Y_{i,t})-\sum\nolimits_{i=1}^{N}H_i({X_i},Y_{i,t+1})+\sum\nolimits_{i=1}^{N}H_i({X_i},U_{i,t})\\&-\sum\nolimits_{i=1}^{N}H_i({X_i},  U_{i,t+1})+\eta_t^2\sum\nolimits_{i=1}^{N}\Vert   Z_{i,t}\Vert^2_2+\eta_t \tr{(  V_t-  X_t)^T   Z_t}+\eta_t^2\sum\nolimits_{i=1}^{N}\Vert \Phi_{i,t}\Vert^2_2,
\end{align*}
where we used the monotonicity of mapping $F$, i.e. $\tr{(X_t-X)(F(X_t)-F(X))}\geq 0$ . By summing the above inequality form $t=0$ to $T-1$, we have
\begin{align} 
&\sum_{t=0}^{T-1}\eta_t \tr{(X_t-X)^TF(X)}\leq \sum_{i=1}^{N} H_i({X_i},Y_{i,0})-\sum_{i=1}^{N}H_i({X_i},Y_{i,T})+\sum_{i=1}^{N}H_i({X_i},U_{i,0})\nonumber\\&-	\sum_{i=1}^{N}H_i({X_i},U_{i,T})+\sum_{t=0}^{T-1}\eta_t^{2}\sum_{i=1}^{N}\Vert   Z_{i,t}\Vert^2_2+\sum_{t=0}^{T-1}\eta_t \tr{(  V_t-  X_t)^T   Z_t}+\sum_{t=0}^{T-1}\eta_t^{2}\sum_{i=1}^{N}\Vert \Phi_{i,t}\Vert^2_2 \nonumber\\ 
\label{eq:lmerror-4}  
 &\leq \sum_{i=1}^{N} H_i({X_i},Y_{i,0})+\sum_{i=1}^{N}H_i({X_i},U_{i,0})+\sum_{t=0}^{T-1}\eta_t^{2}\sum_{i=1}^{N}\Vert   Z_{i,t}\Vert^2_2+\nonumber\\ 
&\sum_{t=0}^{T-1}\eta_t \tr{(  V_t-  X_t)^T   Z_t}+\sum_{t=0}^{T-1}\eta_t^{2}\sum_{i=1}^{N}\Vert \Phi_{i,t}\Vert^2_2,
\end{align}
\nm{where the last inequality holds by $H_i(X_i,Y_i)\geq 0$ implied by Fenchel's inequality. Recall that for $X_i \in \mathcal X_i$, $\tr{X_i}=1$ and $-\log(n_i)\leq\tr{X_i \log X_i}\leq 0$ (\cite{carlen2010trace}). By choosing $Y_{i,0}=U_{i,0}=\mathbf I_{n_i}/{n_i}$ and from \eqref{eq:entropy}, \eqref{eq:conjugate} and \eqref{eq:fenchel},} we have
\begin{align*}
	&H_i(  {X_i},  Y_{i,0})=H_i(  {X_i},  U_{i,0})=\tr{  X_i \log   X_i-  X_i}+\log\tr{\exp(\mathbf I_{n_i}+\frac{\mathbf I_{n_i}}{n_i})}-\tr{\frac{X_i}{n_i}}\\& \leq 0-1+\log(n_i+1)-\frac{1}{n_i} \leq \log(n_i+1).
\end{align*}
Plugging the above inequality into \eqref{eq:lmerror-4} yields
\begin{align}  
\label{eq:bound-5}
&\sum_{t=0}^{T-1}\eta_t \tr{(X_t-X)^T F(X)}=\tr{\sum_{t=0}^{T-1}\eta_t (  X_t-  X)^T F(  X)} 
 \leq 2\sum_{i=1}^{N}\log(n_i+1)+\sum_{t=0}^{T-1}\eta_t^{2}\sum_{i=1}^{N}\Vert   Z_{i,t}\Vert^2_2+\nonumber\\ 
&\sum_{t=0}^{T-1}\eta_t \tr{(  V_t-  X_t)^T   Z_t}+\sum_{t=0}^{T-1}\eta_t^{2}\sum_{i=1}^{N}\Vert \Phi_{i,t}\Vert^2_2.
\end{align}
Let us define $\gamma_t\triangleq\frac{\eta_t}{\sum_{k=0}^{T-1} \eta_k}$, then, we have $\overline {  X}_T\triangleq\sum_{t=0}^{T-1}\gamma_t   X_t$ by Lemma \ref{lemma:average}. We divide both sides of \eqref{eq:bound-5} by $\sum_{t=0}^{T-1} {\eta_t}$. Then for all $X\in \mathcal X$,
\begin{align*}  
&\tr{\left(\sum_{t=0}^{T-1} \gamma_t  X_t-X\right)^T F(X)} =
\tr{\left(\overline{  X}_T-  X\right)^T  F(  X)}  \leq \frac{1}{\sum_{t=0}^{T-1}\eta_t} \Bigg(2\sum_{i=1}^{N}\log(n_i+1)\\
&+\sum_{t=0}^{T-1}\eta_t^{2}\sum_{i=1}^{N}\Vert   Z_{i,t}\Vert^2_2+\sum_{t=0}^{T-1}\eta_t \tr{(  V_t-  X_t)^T   Z_t}+\sum_{t=0}^{T-1}\eta_t^{2}\sum_{i=1}^{N}\Vert \Phi_{i,t}\Vert^2_2\Bigg).
\end{align*}
Note that the set $\mathcal X$ is a convex set. Since $\gamma_t>0$ and $\sum_{t=0}^{T-1}\gamma_t=1$,  $\overline{X}_T \in \mathcal X$. 
Now, we take the supremum over the set $\mathcal X$ with respect to $X$ and use the definition of the $G$ function given by Definition \ref{def:gap}. Note that the right-hand side of the preceding inequality is independent of $X$.
\begin{align*}  
G(\overline{  X}_T) &\leq \frac{1}{\sum_{t=0}^{T-1}\eta_t}\Bigg(2\sum_{i=1}^{N}\log(n_i+1)+\sum_{t=0}^{T-1}\eta_t^{2}\sum_{i=1}^{N}\Vert   Z_{i,t}\Vert^2_2+\sum_{t=0}^{T-1}\eta_t \tr{(  V_t-  X_t)^T   Z_t}\\
&+\sum_{t=0}^{T-1}\eta_t^{2}\sum_{i=1}^{N}\Vert \Phi_{i,t}\Vert^2_2 \Bigg).
\end{align*}
By taking expectations on both sides, we get 
\begin{align*}  
&\EX[G(\overline{  X}_T)] \leq \frac{1}{\sum_{t=0}^{T-1}\eta_t}\Bigg(2\sum_{i=1}^{N}\log(n_i+1)+\sum_{t=0}^{T-1}\eta_t^{2}\sum_{i=1}^{N}\EX[\Vert    Z_{i,t}|\mathcal F_t \Vert ^2_2]+\\&\sum_{t=0}^{T-1}\eta_t \EX[\tr{(  V_t-  X_t)^T   Z_t|\mathcal F_t}]+\sum_{t=0}^{T-1}\eta_t^{2}\sum_{i=1}^{N}\EX[\Vert \Phi_{i,t}|\mathcal F_t\Vert^2_2] \Bigg).
\end{align*}
By definition, both $X_t$ and $V_t$ are $\mathcal F_t$-measurable. Therefore, $V_t-X_t$ is $\mathcal F_t$-measurable. In addition, $Z_t$ is $\mathcal F_{t+1}$-measurable. Thus, by Assumption \ref{ass:boundphi}(c), we have $\EX[\tr{(  V_t-  X_t)^T   Z_t}|\mathcal F_t]=0$. Applying Assumption \ref{ass:boundphi}(b), we have
\begin{align*}  
\EX[G(\overline{  X}_T)] &\leq  \frac{2}{\sum\nolimits_{t=0}^{T-1}\eta_t}  \left(\sum_{i=1}^{N}\log(n_i+1)+\sum\nolimits_{t=0}^{T-1}\eta_t^{2} \sum_{i=1}^{N}C_i^2\right). 
\end{align*}
}
\end{proof}
Next, we present the convergence rate of the A-M-SMD scheme.
\begin{thm} 
Consider Problem \eqref{eq:VI} and let the sequence $\{\overline{  X}_t\}$ be generated by A-M-SMD algorithm. Suppose Assumption \ref{ass:boundphi} holds. Given a fixed $T>0$, let $\eta_t$ be a sequence given by 
\begin{align}
\label{eq:stepsize}
	&\eta_t=\frac{1}{\sum_{i=1}^{N}C_i}\sqrt{\frac{\sum_{i=1}^{N}\log(n_i+1)}{T}}, \quad \text{for all} \quad t\geq 0. 
	\end{align}
	Then, we have,
	\begin{align}
	\label{eq:rate}
	&\EX[G(\overline{  X}_T)] \leq 3\sum_{i=1}^{N}C_i\sqrt{\frac{\sum_{i=1}^{N}\log(n_i+1)}{T}}={\cal O}\left(\frac{1}{\sqrt {T}}\right).
\end{align}
\end{thm}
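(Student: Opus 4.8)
The plan is to obtain the rate as a direct corollary of Lemma \ref{lemma:convergence} by specializing the generic bound \eqref{eq:bound-1} to a constant stepsize and then optimizing that stepsize as a single scalar. First I would set $\eta_t \equiv \eta$ for all $t$, which is trivially non-increasing, so the averaging identity of Lemma \ref{lemma:average} (and hence the whole derivation leading to \eqref{eq:bound-1}) remains valid. Under this choice $\sum_{t=0}^{T-1}\eta_t = T\eta$ and $\sum_{t=0}^{T-1}\eta_t^2 = T\eta^2$, so \eqref{eq:bound-1} collapses to $\EX[G(\overline{X}_T)] \le \frac{2\sum_{i=1}^N \log(n_i+1)}{T\eta} + 2\eta\sum_{i=1}^N C_i^2$, i.e.\ a simple function of the scalar $\eta$.

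Next I would relax the noise term using $\sum_{i=1}^N C_i^2 \le \left(\sum_{i=1}^N C_i\right)^2$, which puts the right-hand side in the form $a/\eta + b\eta$ with $a = \frac{2\sum_{i=1}^N\log(n_i+1)}{T}$ and $b = 2\left(\sum_{i=1}^N C_i\right)^2$. Minimizing over $\eta>0$ is elementary: the minimizer is $\eta^\star = \sqrt{a/b}$, which after simplification is exactly the stepsize prescribed in \eqref{eq:stepsize}. Substituting $\eta^\star$ back, or equivalently using $\min_{\eta>0}\left(a/\eta+b\eta\right)=2\sqrt{ab}$, produces a bound of the form $\,\mathrm{const}\cdot\sum_{i=1}^N C_i\sqrt{\frac{\sum_{i=1}^N\log(n_i+1)}{T}}$, which is $\mathcal{O}(1/\sqrt{T})$ and delivers \eqref{eq:rate}.

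The substantive work has already been carried out in Lemma \ref{lemma:convergence}, so what remains is essentially a one-variable calculus exercise and I expect no genuine obstacle. The two points that require care are, first, that the stepsize \eqref{eq:stepsize} depends on the horizon $T$, so $T$ must be fixed in advance — this is why the statement fixes $T>0$ — and the constant sequence is weakly non-increasing as Lemma \ref{lemma:average} demands; and second, that the relaxation $\sum_{i=1}^N C_i^2\le\left(\sum_{i=1}^N C_i\right)^2$ is precisely what makes the optimal stepsize depend on $\sum_{i=1}^N C_i$ rather than on $\sqrt{\sum_{i=1}^N C_i^2}$, yielding the clean closed form in \eqref{eq:stepsize}. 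The only quantity I would double-check is the numerical constant multiplying the rate, since it follows directly from the $2\sqrt{ab}$ evaluation and from how tightly the variance relaxation is applied.
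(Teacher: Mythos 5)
Your proposal follows essentially the same route as the paper: fix a constant stepsize, specialize \eqref{eq:bound-1} to get a bound of the form $a/\eta+b\eta$, and minimize over $\eta>0$; the relaxation $\sum_{i=1}^N C_i^2\le\left(\sum_{i=1}^N C_i\right)^2$ that you make explicit is exactly what is needed (and is left implicit in the paper) for the minimizer to come out as \eqref{eq:stepsize}. Your closing caution about the constant is warranted, and in fact exposes an inconsistency in the paper itself: with $a=\tfrac{2\sum_i\log(n_i+1)}{T}$ and $b=2\left(\sum_i C_i\right)^2$, the evaluation $2\sqrt{ab}$ (equivalently, plugging \eqref{eq:stepsize} back into \eqref{eq:bound-1}) gives $4\sum_{i=1}^N C_i\sqrt{\tfrac{\sum_{i=1}^N\log(n_i+1)}{T}}$, not the constant $3$ claimed in \eqref{eq:rate}. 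The constant $3$ would follow only if the noise term in \eqref{eq:bound-1} entered with coefficient $1$ rather than $2$ (i.e., a bound of the form $\tfrac{2\sum_i\log(n_i+1)}{T\eta}+\eta\sum_i C_i^2$), which is not what that lemma states. So your argument is correct and delivers the $\mathcal O(1/\sqrt{T})$ rate with the prescribed stepsize, but with the honest constant $4$; the discrepancy lies in the paper's statement, not in your proof.
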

  \begin{proof}
\nm{Consider relation \eqref{eq:bound-1}.} Assume that the number of iterations $T$ is fixed and $\eta_t=\eta$ for all $t \geq 0$, then, we get
\begin{align*}  
\EX[G(\overline{X}_T)] &\leq \frac{2 \left(\sum_{i=1}^{N}\log(n_i+1)+T\eta^{2} \sum_{i=1}^{N}C_i^2\right)}{T\eta}.
\end{align*}
Then, by minimizing the right-hand side of the above inequality over $\eta>0$, we obtain the constant stepsize \eqref{eq:stepsize}.
By plugging \eqref{eq:stepsize} into \eqref{eq:bound-1}, we obtain \eqref{eq:rate}. 
\end{proof}

\section{Numerical Experiments}
\label{sec:num}
In this section, we examine the behavior of A-M-SMD method on throughput maximization problem in a multi-user MIMO wireless network as described in Section \ref{sec:motiv}. 
\subsection{Preliminary Analysis}
First, we need to show that the Nash equilibrium of game \eqref{eq:Rgame} is a solution of VI$(\mathcal X,F)$. In order to apply Lemma \ref{lemma:nash}, we need to prove that the throughput function $R_i( X_i, X_{-i})$ is a concave function.
In the next lemma, we show the sufficient conditions on two functions that guarantee the concavity of their composition. The proof can be found in Appendix.
\begin{lemma}
\label{lem:concaveconvex}
Suppose $h:\mathbb H_n \rightarrow \mathbb R$ and $g:\mathbb H_m\rightarrow \mathbb H_n$. Then, $f(X)=h(g(X))$ is concave if $h$ is concave and matrix monotone increasing \ks{(cf. Definition \ref{def:matrix}-e)} and $g$ is concave.
\end{lemma}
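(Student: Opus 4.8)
The plan is to adapt the classical scalar composition rule — that a nondecreasing concave outer function composed with a concave inner function is itself concave — to the Löwner (positive-semidefinite) ordering. I would fix arbitrary $X, Y \in \mathbb H_m$ and a scalar $\lambda \in [0,1]$, and then chain three inequalities in sequence: one coming from concavity of $g$ read in the matrix sense, one from the matrix monotonicity of $h$, and one from the scalar concavity of $h$.

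First, I would invoke concavity of $g$ to write
\[
g(\lambda X + (1-\lambda) Y) \succeq \lambda g(X) + (1-\lambda) g(Y),
\]
where $\succeq$ denotes the Löwner order on $\mathbb H_n$. Next, since $h$ is matrix monotone increasing (Definition \ref{def:matrix}-e), applying $h$ to both sides of this operator inequality preserves its direction, giving
\[
h\!\left(g(\lambda X + (1-\lambda) Y)\right) \geq h\!\left(\lambda g(X) + (1-\lambda) g(Y)\right).
\]
Finally, concavity of $h$ as a real-valued function on $\mathbb H_n$ yields
\[
h\!\left(\lambda g(X) + (1-\lambda) g(Y)\right) \geq \lambda\, h(g(X)) + (1-\lambda)\, h(g(Y)).
\]
Combining the last two displays and recalling that $f = h \circ g$ establishes $f(\lambda X + (1-\lambda)Y) \geq \lambda f(X) + (1-\lambda) f(Y)$, i.e.\ concavity of $f$.

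The only genuinely delicate point — and the step I would verify most carefully — is the second inequality, where matrix monotonicity of $h$ is used to compare $h$ evaluated at two matrices ordered by $\succeq$. This hinges on the notion of \emph{matrix monotone increasing} being exactly the statement $A \preceq B \implies h(A) \leq h(B)$ in the Löwner order, which I would confirm against Definition \ref{def:matrix}-e; I would likewise confirm that concavity of $g$ is meant in the matrix sense $g(\lambda X + (1-\lambda)Y) \succeq \lambda g(X) + (1-\lambda) g(Y)$, so that the output of the first step lies precisely where the monotonicity of $h$ can be applied. Once these two conventions are pinned down, no estimates remain — the argument is a clean three-line chaining — so I anticipate no computational obstacle beyond this definitional bookkeeping.
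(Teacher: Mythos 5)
Your proposal is correct and coincides with the paper's own argument: both proofs fix $X,Y\in\mathbb H_m$ and $\lambda\in[0,1]$, apply concavity of $g$ in the L\"owner order, then matrix monotonicity of $h$ (Definition \ref{def:matrix}-e), then scalar concavity of $h$, and chain the three inequalities. Your added care in pinning down that ``matrix monotone increasing'' and ``concave'' for $g$ are meant in the L\"owner sense matches exactly how the paper's definitions are used, so there is nothing to amend.
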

Now, we apply Lemma \ref{lem:concaveconvex} to show each player's objective function $R_i( X_i,X_{-i})$ is concave.
\begin{lemma}
\label{lem:concavity}
The user's transmission throughput function $R_i( X_i, X_{-i})$ is concave in $\mathcal X_i$.
\end{lemma}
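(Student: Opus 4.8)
The plan is to reduce the statement to a single application of Lemma \ref{lem:concaveconvex} after isolating the only part of $R_i$ that actually depends on $X_i$. First I would observe that the MUI-plus-noise covariance $W_{-i} = \mathbf I_{m_i} + \sum_{j\neq i} H_{ji} X_j H_{ji}^\dagger$ involves only the blocks $X_j$ with $j \neq i$; hence, with $X_{-i}$ held fixed, the term $-\log\det(W_{-i})$ is a constant in $X_i$ and is irrelevant to concavity in $X_i$. Separating the $j=i$ summand from the first log-determinant in \eqref{eq:game} gives
\[
R_i(X_i, X_{-i}) = \log\det\!\left(W_{-i} + H_{ii} X_i H_{ii}^\dagger\right) - \log\det(W_{-i}),
\]
so it suffices to prove that $X_i \mapsto \log\det(W_{-i} + H_{ii}X_i H_{ii}^\dagger)$ is concave on $\mathcal X_i$.

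Next I would identify the inner and outer maps required by Lemma \ref{lem:concaveconvex}. I set $g(X_i) \triangleq W_{-i} + H_{ii} X_i H_{ii}^\dagger$, which is an affine (hence concave) map from $\mathbb H_{n_i}$ into $\mathbb H_{m_i}$; because $W_{-i} \succeq \mathbf I_{m_i}$ and $H_{ii} X_i H_{ii}^\dagger \succeq 0$ for every $X_i \in \mathcal X_i$, the image $g(X_i)$ is positive definite throughout the domain. I then set $h(Y) \triangleq \log\det(Y)$ on the positive-definite cone of $\mathbb H_{m_i}$. The function $h$ is concave (the standard fact for $\log\det$) and matrix monotone increasing in the Loewner order, since $A \succeq B \succ 0$ yields $B^{-1/2} A B^{-1/2} \succeq \mathbf I$, whence $\det A / \det B = \det(B^{-1/2} A B^{-1/2}) \geq 1$ and therefore $\log\det A \geq \log\det B$. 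Applying Lemma \ref{lem:concaveconvex} with this $h$ and $g$ shows that $h(g(X_i)) = \log\det(W_{-i} + H_{ii} X_i H_{ii}^\dagger)$ is concave in $X_i$, and adding the $X_i$-independent constant $-\log\det(W_{-i})$ preserves concavity. Hence $R_i(\cdot, X_{-i})$ is concave on $\mathcal X_i$.

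I do not anticipate a serious obstacle, because $g$ is affine and the entire content lies in checking the hypotheses of Lemma \ref{lem:concaveconvex} for $h=\log\det$. The one point that requires care is that all matrices here are complex Hermitian rather than real symmetric, so I would make sure the concavity and matrix-monotonicity facts for $\log\det$ are invoked in their Hermitian form (consistent with Lemma \ref{lem:concaveconvex}, which is already stated over $\mathbb H_n$), and that $g$ genuinely lands in the positive-definite cone on all of $\mathcal X_i$ so that the composition $h\circ g$ is well defined on the whole feasible set.
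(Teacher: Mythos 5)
Your proposal is correct and follows essentially the same route as the paper: both isolate the $X_i$-dependence in the affine map $X_i \mapsto W_{-i}+H_{ii}X_iH_{ii}^\dagger$ and then invoke Lemma \ref{lem:concaveconvex} with the outer function $h=\log\det$, which is concave and matrix monotone increasing. The only differences are cosmetic refinements on your part: you prove the matrix monotonicity of $\log\det$ directly via the congruence $B^{-1/2}AB^{-1/2}\succeq \mathbf I$ (the paper instead deduces it from the monotone decrease of $\log\det(X^{-1})$), and you explicitly note positive definiteness of the image and the irrelevance of the constant term $-\log\det(W_{-i})$, points the paper leaves implicit.
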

\begin{proof}
Let us define $ W( X_i)=\mathbf I_{m_i}+\sum_{j\ne i} H_{ji} X_j  H_{ji}^\dagger+ H_{ii} X_i  H_{ii}^\dagger$. The function $ W( X_i)$ is a linear function in terms of $ X_i$.
Note that every linear transformation $T$ of the form $T: A\rightarrow \sum_{i}\alpha_i H_{ii}^\dagger A^T  H_{ii}$ preserves Hermitian matrices (\cite{de1967linear}), where $\alpha_i$ is a real scalar, and each $ H_{ii}$ is a certain matrix depending on $T$. Therefore, $ W( X_i)$ is Hermitian. Therefore, by definition \ref{def:matrix}(c), $ W( X_i)$ is both convex and concave in $ X_i$.
\\
We also know that $\log\det( X^{-1})$ is monotone decreasing (\cite{vandenberghe1998determinant}), meaning that if $ A\geq  B$, then $\log\det( A^{-1})\leq \log\det( B^{-1})$. Then, we have 
$\log\det(\mathbf I_{m_i})=\log\det( A A^{-1})=\log\det( A)+\log\det( A^{-1})$,	
which results in $\log(1)=0=\log\det( A )+\log\det( A ^{-1})$. Therefore, $\log\det( A )\geq \log\det( B )$ which means $\log\det( X)$ is monotone increasing.

We also know that $g( X)=\log \det ( X)$ is a concave function (\cite{boyd2004convex}, page 74). From convexity of $W( X_i)$ and Lemma \ref{lem:concaveconvex}, we conclude that $R_i(X_i, X_{-i})=\log \det\big(\mathbf I_{m_i}$ $+\sum_{j} H_{ji} X_j  H_{ji}^\dagger\big)-\log \det( W_{-i})$ is a concave function in $ X_i$. 
\end{proof}
The following Corollary shows that sufficient equilibrium conditions are satisfied, therefore a Nash equilibrium of game \eqref{eq:Rgame} is a solution of variational inequality Problem \eqref{eq:VI}.
\begin{col}
The Nash equilibrium of \eqref{eq:Rgame} is a solution of VI$(\mathcal X,F)$ where $\mathcal X\triangleq\prod_i \mathcal X_i$ and $F(X)\triangleq-\text{diag}\left( H_{11}^\dagger W^{-1} H_{11},\cdots, H_{NN}^\dagger W^{-1} H_{NN}\right)$.
\end{col}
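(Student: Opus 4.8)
The plan is to reduce the maximization game \eqref{eq:Rgame} to a minimization game of the form \eqref{eqn:problem1} and then invoke Lemma \ref{lemma:nash}. First I would set $f_i(X_i,X_{-i}) := -R_i(X_i,X_{-i})$, so that maximizing $R_i$ over $\mathcal X_i$ is equivalent to minimizing $f_i$ over $\mathcal X_i$, and the two games share the same best responses and hence the same Nash equilibria. By Lemma \ref{lem:concavity}, $R_i$ is concave in $X_i$, so $f_i$ is convex and differentiable in $X_i$, while each $\mathcal X_i$ is nonempty, closed and convex. Thus the hypotheses of Lemma \ref{lemma:nash} are met, and the equilibrium of \eqref{eq:Rgame} coincides with a solution of VI$(\mathcal X,\widetilde F)$, where $\widetilde F(X)=\text{diag}\left(\nabla_{X_1}f_1(X),\ldots,\nabla_{X_N}f_N(X)\right)$ and $\mathcal X=\prod_i\mathcal X_i$ is identified with the block-diagonal matrices of Lemma \ref{lemma:nash}.

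The core of the argument is then to identify $\widetilde F$ with the claimed mapping $F$ by computing $\nabla_{X_i}R_i(X)$. Writing $W_i:=W(X_i)=\mathbf I_{m_i}+\sum_{j}H_{ji}X_jH_{ji}^\dagger$ and recalling $R_i(X_i,X_{-i})=\log\det(W_i)-\log\det(W_{-i})$, I would note that $W_{-i}$ does not depend on $X_i$, so the second term contributes nothing to $\nabla_{X_i}R_i$. For the first term, I would use that $W_i$ is affine in $X_i$, namely $W_i=W_{-i}+H_{ii}X_iH_{ii}^\dagger$, together with the standard matrix-calculus identity $\nabla_A\log\det(A)=A^{-1}$ for Hermitian positive definite $A$ and the chain rule, to obtain $\nabla_{X_i}\log\det(W_i)=H_{ii}^\dagger W_i^{-1}H_{ii}$. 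Consequently $\nabla_{X_i}f_i(X)=-H_{ii}^\dagger W_i^{-1}H_{ii}$, and assembling the blocks gives $\widetilde F(X)=-\text{diag}\left(H_{11}^\dagger W_1^{-1}H_{11},\ldots,H_{NN}^\dagger W_N^{-1}H_{NN}\right)=F(X)$. Applying Lemma \ref{lemma:nash} then yields the conclusion.

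The main obstacle is the gradient computation in the complex Hermitian setting. The variables $X_i$ live in $\mathbb H_{n_i}$ and the channel matrices $H_{ji}$ are complex, so one must be careful with the placement of conjugate transposes and with the differentiation convention (e.g. Wirtinger calculus, or viewing $\mathbb C^{n_i\times n_i}$ as a real inner-product space under $\langle A,B\rangle=\mathrm{Re}\,\tr{A^\dagger B}$); the end result $H_{ii}^\dagger W_i^{-1}H_{ii}$ is, however, the familiar MIMO waterfilling gradient. A secondary technical point is that Lemma \ref{lemma:nash} is phrased for real symmetric matrices $\mathbb S_{n_i}$, whereas here the feasible sets are complex Hermitian; I would remark that its proof --- reducing each player's optimality condition via Lemma \ref{lemma:optimality} to a trace inequality --- transfers verbatim to the Hermitian case once the real trace pairing is used, so no new argument is needed.
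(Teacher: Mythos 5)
Your proposal is correct and follows essentially the same route as the paper: negate $R_i$ to obtain a convex minimization game, invoke Lemma \ref{lem:concavity} for concavity of $R_i$ and Lemma \ref{lemma:nash} to pass to the VI, and identify the mapping via $\nabla_{X_i}R_i = H_{ii}^\dagger W^{-1}H_{ii}$. The only differences are minor: the paper simply cites \cite{mertikopoulos2016learning} for the gradient formula where you derive it by the chain rule, and the paper silently glosses over the complex-Hermitian versus real-symmetric mismatch with Lemma \ref{lemma:nash} that you rightly flag and resolve.
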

\begin{proof}
Please note that $\nabla_{X_i} R_i( X_i, X_{-i})=\nabla_{ X_i} \log \det\left(\mathbf I_{m_i}+\sum_{j} H_{ji} X_j  H_{ji}^\dagger\right)$ since the second term,
$\log \det( W_{-i})$, is independent of $ X_i$.
Let us define $ W=\left(\mathbf I_{m_i}+\sum_{j} H_{ji} X_j  H_{ji}^\dagger\right)$. Then, we have $\nabla_{ X_i} R_i(X_i,X_{-i})= H_{ii}^\dagger W^{-1} H_{ii}$ (\cite{mertikopoulos2016learning}). By Lemma \ref{lem:concavity},
each player's objective function $R_i( X_i, X_{-i})$ is concave in $ X_i$. We also know that $\mathcal X_i$ is a convex set. Therefore, using Lemma \ref{lemma:nash}, we have sufficient conditions to state the game \eqref{eq:Rgame} as a variational inequality problem VI(${\mathcal X}, F$).
\end{proof}
The next two lemmas show that the mapping $F$ defined by \eqref{Fdefinition} is monotone. The proof of the next lemma can be found in Appendix.
\begin{lemma}
\label{lem:gradientmonotone}
Suppose $f:\mathbb H_m\rightarrow \mathbb R$ is a differentiable function. If $f$ is a convex function, then $\nabla f$ is monotone, i.e., $\tr{\left(\nabla_{{ X}}^T f({ X})-\nabla_{{ Z}}^T f({ Z})\right)(X- { Z})} \geq 0$, for all $ X, Z\in \mathbb H_m$.
\end{lemma}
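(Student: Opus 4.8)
The plan is to reduce the claim to the classical fact that the gradient of a differentiable convex function is a monotone operator, transported to the space $\mathbb{H}_m$ with the trace pairing $\langle A,B\rangle=\tr{A^T B}$ that is already in force throughout the paper. Since $\mathbb{H}_m$ is a finite-dimensional real vector space and $f$ is differentiable and convex on it, the first-order characterization of convexity is available: for any $X,Z\in\mathbb{H}_m$,
\[
f(Z)\geq f(X)+\tr{\nabla f(X)^T(Z-X)}.
\]

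First I would record this inequality, then write the same inequality with the roles of $X$ and $Z$ interchanged, namely
\[
f(X)\geq f(Z)+\tr{\nabla f(Z)^T(X-Z)},
\]
and add the two. The terms $f(X)$ and $f(Z)$ cancel from both sides, leaving
\[
0\geq \tr{\nabla f(X)^T(Z-X)}+\tr{\nabla f(Z)^T(X-Z)}.
\]
Using $\tr{\nabla f(X)^T(Z-X)}=-\tr{\nabla f(X)^T(X-Z)}$ and regrouping then gives
\[
\tr{\left(\nabla_X^T f(X)-\nabla_Z^T f(Z)\right)(X-Z)}\geq 0,
\]
which is precisely the asserted monotonicity of $\nabla f$.

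The only step that requires genuine care — and the one I would treat as the main obstacle — is justifying the first-order convexity inequality in the Hermitian matrix setting with the exact trace pairing used in the statement. Concretely, one must confirm that the gradient $\nabla f$, defined so that the directional derivative of $f$ at $X$ along $H$ equals $\tr{\nabla f(X)^T H}$, is consistent with the convention employed elsewhere (for instance in the definition of $F$ in \eqref{Fdefinition} and in Lemma \ref{lemma:optimality}), so that $\tr{(\cdot)^T(\cdot)}$ is indeed the real inner product making the standard convex-analysis argument applicable on $\mathbb{H}_m$. Once this identification is in place, no further structure of $\mathbb{H}_m$ (positive semidefiniteness, spectral decompositions, etc.) is needed, and the remainder is the routine two-line addition sketched above.
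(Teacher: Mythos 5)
Your proposal is correct and follows essentially the same route as the paper: both proofs sum the two first-order convexity (gradient) inequalities at $X$ and $Z$, cancel the function values, and regroup via linearity of the trace. The consistency issue you flag—that the gradient is defined with respect to the trace pairing $\tr{A^TB}$—is exactly what the paper handles by invoking its vectorization result (Lemma \ref{lemma:change}) before applying the classical first-order inequality.
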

\begin{lemma}
\label{lemma:mon}
Consider the function $R_i$ given by \eqref{eq:game} and its gradient $\nabla_{X _i}^T \left(R_i(X _i,X _{-i})\right)=( H _{ii}^\dagger W^{-1} H_{ii})^T$. The mapping $F(X )\triangleq-\text{diag}\left( \nabla_{X _1} R_1(X _1,X _{-1}),\ldots,\nabla_{ X_N} R_N( X_N,X_{-N})\right)=-\text{diag}\big( H_{11}^\dagger W^{-1} H _{11},\cdots$ $, H_{NN}^\dagger  W^{-1} H_{NN}\big)$ is monotone.
\end{lemma}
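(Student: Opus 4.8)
The plan is to verify the defining monotonicity inequality directly for arbitrary $X=\mathrm{diag}(X_1,\dots,X_N)$ and $Y=\mathrm{diag}(Y_1,\dots,Y_N)$ in $\mathcal X$. Since both $X-Y$ and $F(X)-F(Y)$ are block diagonal, the trace factorizes over the diagonal blocks, so I would first reduce the statement to an aggregate sum by writing
\[
\tr{(X-Y)\big(F(X)-F(Y)\big)}=\sum_{i=1}^N \tr{(X_i-Y_i)\big(\nabla_{X_i}R_i(Y)-\nabla_{X_i}R_i(X)\big)},
\]
and then establishing that this sum is nonnegative.

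The natural first move is to bring in the tools already established. By Lemma \ref{lem:concavity} each $R_i$ is concave in its own block $X_i$, hence $-R_i(\cdot,X_{-i})$ is convex in $X_i$, and Lemma \ref{lem:gradientmonotone} then gives that $-\nabla_{X_i}R_i$ is monotone \emph{in the variable $X_i$}. This controls the portion of the variation in which only the $i$-th block moves. The catch is that $\nabla_{X_i}R_i(X)=H_{ii}^\dagger W_i(X)^{-1}H_{ii}$, with $W_i(X)=\mathbf I_{m_i}+\sum_j H_{ji}X_jH_{ji}^\dagger$, depends on the \emph{entire} profile $X$, so concavity in $X_i$ alone does not dispose of the coupling across blocks.

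To handle the coupling I would pass to the explicit resolvent form. Setting $\delta_{ji}\triangleq H_{ji}(X_j-Y_j)H_{ji}^\dagger$ (a Hermitian matrix) and using $W_i(X)^{-1}-W_i(Y)^{-1}=-W_i(X)^{-1}\big(W_i(X)-W_i(Y)\big)W_i(Y)^{-1}$ together with $W_i(X)-W_i(Y)=\sum_{j}\delta_{ji}$, the $i$-th summand collapses to $-\sum_{j}\tr{\delta_{ii}\,W_i(X)^{-1}\,\delta_{ji}\,W_i(Y)^{-1}}$, so the whole claim reduces to
\[
\sum_{i=1}^N\sum_{j=1}^N \tr{\delta_{ii}\,W_i(X)^{-1}\,\delta_{ji}\,W_i(Y)^{-1}}\ \ge\ 0 .
\]
The diagonal terms $j=i$ are unconditionally nonnegative: writing $M=W_i(X)^{-1}\succ 0$ and $N=W_i(Y)^{-1}=N^{1/2}N^{1/2}\succ 0$, one has $\tr{\delta_{ii}M\delta_{ii}N}=\tr{(N^{1/2}\delta_{ii})\,M\,(N^{1/2}\delta_{ii})^\dagger}\ge 0$, since $\delta_{ii}$ and $N^{1/2}$ are Hermitian and $M\succ 0$.

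The main obstacle is exactly the off-diagonal ($j\ne i$) interference terms, which carry no intrinsic sign; proving nonnegativity of the full $N^2$-term quadratic form is the step I expect to be hardest and the one that genuinely uses the channel structure (or a diagonal-dominance type hypothesis that lets the self-terms dominate the cross-interference). My plan would be to collect the double sum into a single Gram-type expression and bound the cross-terms against the diagonal ones. Once that nonnegativity is secured, monotonicity of $F$ follows immediately from the block decomposition above, completing the proof.
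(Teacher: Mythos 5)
Your proposal is incomplete, and the gap is the one you flag yourself: after the resolvent reduction, the nonnegativity of the double sum $\sum_{i}\sum_{j}\tr{\delta_{ii}\,W_i(X)^{-1}\,\delta_{ji}\,W_i(Y)^{-1}}$ is never established. You show the diagonal terms $j=i$ are nonnegative, but the off-diagonal interference terms carry no intrinsic sign, and without a weak-interference or diagonal-dominance hypothesis on the cross-channel matrices $H_{ji}$ they can dominate the diagonal contribution. So the argument stops exactly where the mathematical content of the claim lives; ``collect the double sum into a Gram-type expression and bound the cross-terms'' is a hope, not a proof.

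That said, your diagnosis is sharper than the paper's own proof, and it exposes the flaw in it. The paper deduces from concavity of $R_i$ in $X_i$ (Lemma \ref{lem:concavity}) together with Lemma \ref{lem:gradientmonotone} the blockwise inequality \eqref{eq:positivity}, and then sums over blocks. But \eqref{eq:positivity} is applied with the two gradients evaluated at profiles $(X_i,X_{-i})$ and $(Z_i,Z_{-i})$ that differ in \emph{all} blocks, whereas convexity of $-R_i(\cdot,X_{-i})$ in its own block only yields monotonicity of the partial gradient map for fixed $X_{-i}$, i.e., when $X_{-i}=Z_{-i}$. The paper's notation $W^{-1}(X_i)$ conceals that $W=\mathbf I_{m_i}+\sum_j H_{ji}X_jH_{ji}^\dagger$ depends on every block; in your notation, the paper's blockwise step keeps only the $\delta_{ii}$ self-terms and silently discards exactly the unsigned $j\neq i$ interference terms you isolate. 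Consequently neither your argument nor the paper's establishes monotonicity of $F$ for arbitrary channel matrices; a correct proof requires an additional structural assumption on the $H_{ji}$ (of the weak-interference type standard in the MIMO games literature), under which your Gram/diagonal-dominance strategy is the natural way to finish.
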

\begin{proof}
The function $R_i(X _i,X _{-i})$ is concave in $X _i$ by Lemma \ref{lem:concavity} and as a result $-R_i(X _i,X _{-i})$ is a convex function. Therefore, $\nabla_{X _i}^T \left(-R_i(X _i,X _{-i})\right)=-( H _{ii}^\dagger W ^{-1} H _{ii})^T$ is monotone in $X _i$ by Lemma \ref{lem:gradientmonotone}. In other words,
\begin{align}
\label{eq:positivity}
	&-\tr{\left(\nabla_{{X _i}}^T R_i(X _i,X _{-i})-\nabla_{{ Z _i}}^T R_i( Z _i, Z _{-i})\right)(X _i- { Z _i})}= \notag\\
	&-\tr{\left( H _{ii}^\dagger W ^{-1}(X _i) H _{ii}- H _{ii}^\dagger W ^{-1}( Z _i) H _{ii}\right)^T\left(X _i- { Z _i}\right)}
	\geq 0, \quad \text{for all}\quad X _i, Z _i\in \mathcal X_i.
	\end{align}
Then, we have
\begin{align*}
&\tr{( F (X )- F ( Z ))(X - Z )}=\\ 
&\text{tr}(-\text{diag}( \nabla_{X _1} R_1(X _1,X _{-1})-\nabla_{ Z _1} R_1( Z _1, Z _{-1}),\ldots,\nabla_{X _N} R_N(X _N,X _{-N})-\nabla_{ Z _N} R_N( Z _N, Z _{-N})) \\
&\times\text{diag}({X _1}-  Z _1,\ldots,{X _N}-  Z _N ))\\
&=\text{tr}\big( -\text{diag}\left( H _{11}^\dagger W ^{-1}(X _1) H _{11}- H _{11}^\dagger W ^{-1}( Z _1) H _{11},\ldots, H _{NN}^\dagger W ^{-1}(X _N) H _{NN}- H _{NN}^\dagger W ^{-1}(Z_N)H _{NN} \right)\\\times&\text{diag}\left({X _1}-  Z _1,\ldots,{X _N}-  Z _N \right)\big)
\\
&=-\sum_{i=1}^N\sum_{u=1}^{m_i}\sum_{v=1}^{m_i}[ ( H _{ii}^\dagger W ^{-1}(X _i) H _{ii}- H _{ii}^\dagger W ^{-1}( Z _i) H _{ii})^T ]_{uv}[(X _i- Z _i)]_{uv}\geq 0,
\end{align*}
where the last relation follows by inequality \eqref{eq:positivity}.

\end{proof}
\ks{
\begin{rem}
Using Lemma \ref{lemma:mon}, the mapping $F$ defined by \eqref{Fdefinition} is monotone. Therefore, applying Lemma \ref{lemma:convergence}, the sequence $\overline{X }_t$ generated by A-M-SMD algorithm converges to the weak solution of variational inequality \eqref{eq:VI}.
\end{rem}
}
\subsection{Problem Parameters and Termination Criteria}
We consider a MIMO multi-cell cellular network composed of seven hexagonal cells (each with a radius of $1$ km) as Figure \ref{fig:cell}. We assume there is one MIMO link (user) in each cell which corresponds to the transmission from a transmitter (T) to a receiver (R). Following \cite{scutari2009mimo} we generate the channel matrices with a Rayleigh distribution, in other words, each element is generated as circularly symmetric Gaussian random variable with variance equal to the inverse of the square distance between the transmitters and receivers. In this regard, we normalize the distance between transmitters and receivers at first. The network can be considered as a 7-users game where each link (user) is a MIMO channel. 
\begin{figure}[H]
\centering
  \includegraphics[scale=0.4]{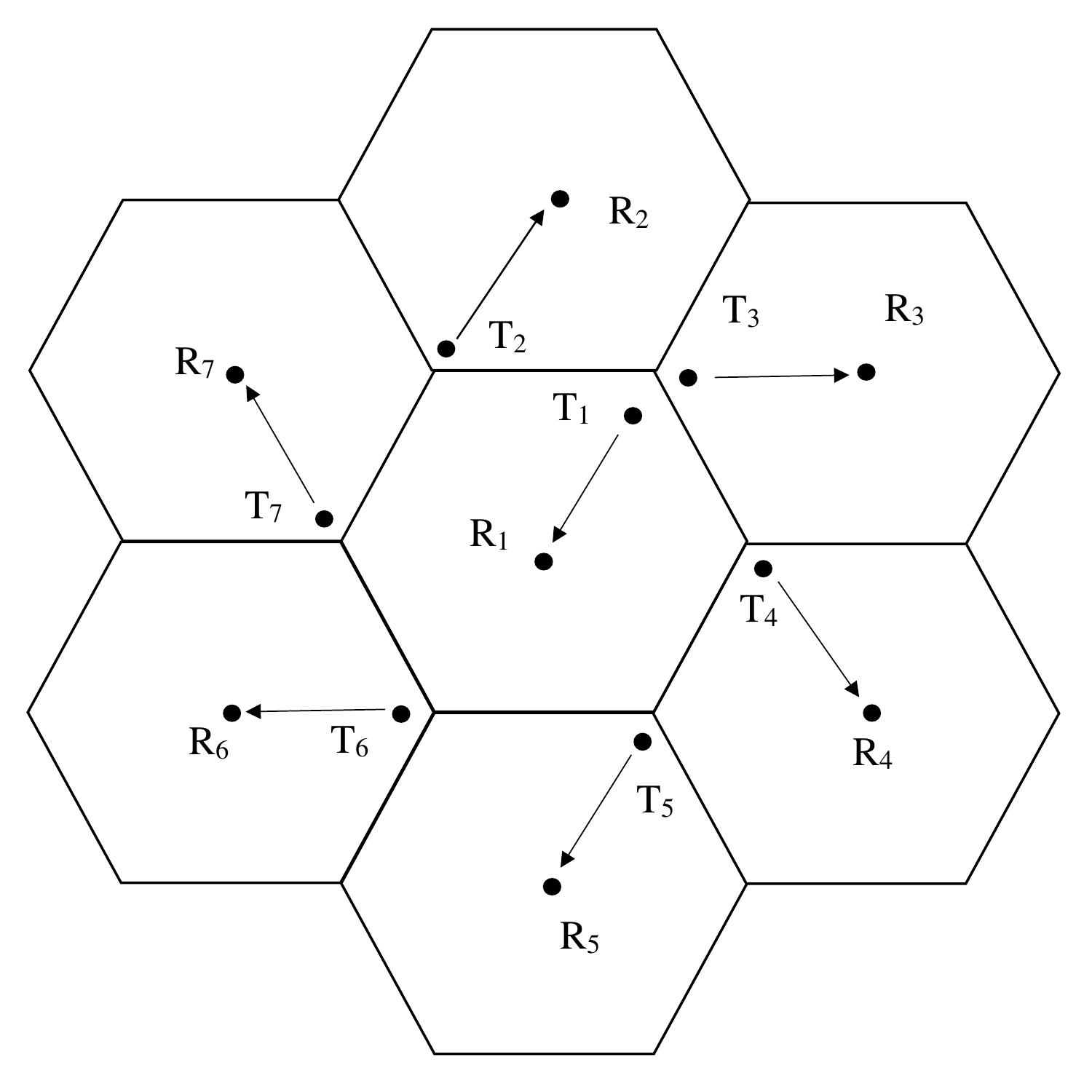}
	\caption{Multicell cellular system}
	\label{fig:cell}
\end{figure}
Distances between different receivers and transmitters are shown in Table \ref{table:distance}. It should be noted that the channel matrix between any pair of transmitter $i$ and receiver $j$ is a matrix with dimension of $m_j \times n_i$. In the experiments, we assume $m_j=m$ for all $j\in\{1,\ldots,7\}$ $n_i=n$ for all $i\in\{1,\ldots,7\}$.
As mentioned before, $p_{max}$ is the maximum average transmitted power in units of energy per transmission. In the experiments, the transmitters have a maximum power of $1$ decibels of the measured power referenced to one milliwatt (dBm). 
\begin{table}[ht]
\footnotesize
\caption{Distance matrix (in terms of kilometer)}
\label{table:distance}
\centering
\begin{tabular}{|c|c c c c c c c|}
\hline
\backslashbox{\scriptsize{Receiver}}{\scriptsize{Transmitter}} & R1 & R2 & R3 & R4 & R5 & R6 & R7  \\ \hline
T1 & 0.8944 & 1.0143 & 1.0568 & 1.1020 & 1.0143 & 1.0568 & 1.1020  \\
T2 & 1.0143 & 0.8944 & 1.0568 & 2.1079 & 2.6940 & 2.6677 & 1.9964	\\
T3 & 1.1020 & 1.9011 & 0.8944 & 1.0143 & 2.1079 & 2.7265 & 2.7203	 \\ 
T4 & 1.9964 & 2.6159 & 1.9493 & 0.8944 & 1.1020 & 2.1056 & 2.7620  \\
T5 & 2.5635 & 2.6940 & 2.6677 & 1.9964 & 0.8944 & 1.0568 & 2.1079 \\
T6 & 2.5270 & 2.1079 & 2.7265 & 2.7203 & 1.9011 & 0.8944 & 1.0143 \\
T7 & 1.9011 & 1.1020 & 2.1056 & 2.7620 & 2.6159 & 1.9493 & 0.8944 \\
\hline
\end{tabular}
\end{table}
We investigate the robustness of A-M-SMD algorithm under imperfect feedback. To simulate imperfections, the elements of $Z_{i,t}$ are generated as zero-mean circularly symmetric complex Gaussian random variables with variance equal to $\sigma$.
To demonstrate the performance of the methods in this section, we employ the following gap function $Gap(X )$ which is equal to zero for a strong solution.
\begin{defn} [A gap function] \label{def:gap2} Define the following function $Gap: \mathcal X \rightarrow \mathbb R$
\begin{equation}
\label{gap3}
	Gap( X )= \underset { Z  \in \mathcal X} {\sup}\ \tr{( X - Z )^T F(X )}, \quad \text{for all}~  X  \in  \mathcal X.
\end{equation}
\end{defn}
In the following lemma, we provide some properties of the Gap function. The proof can be find in Appendix.
\begin{lemma} [Properties of the Gap function]
\label{lm:gap properties}
The function $Gap( X )$ given by Definition \ref{def:gap2} is a well-defined gap function, in other words, $(i)$ $Gap( X )$ is nonnegative for all $X  \in \mathcal X$; and $(ii)$ $X ^*$ is a strong solution to Problem \eqref{eq:VI} iff $Gap( {X^*})=0$.
\end{lemma}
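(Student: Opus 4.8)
The plan is to mirror the argument already used for the $G$ function in Lemma~\ref{lm:gap properties2}, the only essential change being that $F$ is now evaluated at the running point $X$ rather than at the dummy variable $Z$, so that the relevant inequality becomes the defining inequality of a \emph{strong} solution \eqref{eq:VI2} instead of a weak one. For part $(i)$, I would simply observe that the supremum in \eqref{gap3} is taken over the whole set $\mathcal X$, which contains the point $Z=X$ itself. Substituting $Z=X$ gives $\tr{(X-X)^T F(X)}=0$, and since the supremum dominates every feasible value, $Gap(X)\geq 0$ for all $X\in\mathcal X$.

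For the forward implication of part $(ii)$, suppose $X^*$ is a strong solution of \eqref{eq:VI}. By the block-diagonal/Cartesian reduction used in the proof of Lemma~\ref{lemma:nash}, the componentwise conditions \eqref{eq:VI2} aggregate to the single inequality $\tr{(Z-X^*)^T F(X^*)}\geq 0$ for every $Z\in\mathcal X$. Rewriting this as $\tr{(X^*-Z)^T F(X^*)}\leq 0$ and taking the supremum over $Z\in\mathcal X$ yields $Gap(X^*)\leq 0$; combined with part $(i)$ this forces $Gap(X^*)=0$.

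For the converse, assume $Gap(X^*)=0$. Then $\tr{(X^*-Z)^T F(X^*)}\leq 0$ for all $Z\in\mathcal X$, equivalently $\tr{(Z-X^*)^T F(X^*)}\geq 0$ for all $Z\in\mathcal X$. Undoing the aggregation of Lemma~\ref{lemma:nash} by choosing $Z$ to differ from $X^*$ only in its $i$-th block recovers the per-player inequalities \eqref{eq:VI2}, so $X^*\in\text{SOL}(\mathcal X,F)$ is a strong solution.

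The step that deserves the most care --- and the only place where this differs from a line-by-line copy of Lemma~\ref{lm:gap properties2} --- is verifying that the aggregated inequality $\tr{(Z-X^*)^T F(X^*)}\geq 0$ over $Z\in\mathcal X$ is genuinely equivalent to the componentwise strong-solution definition \eqref{eq:VI2}. This is exactly the block-diagonal argument already carried out in Lemma~\ref{lemma:nash}: one direction sums the block inequalities, and the other specializes $Z$ to agree with $X^*$ outside a single block. I would invoke that reduction rather than repeat it, which keeps the proof short.
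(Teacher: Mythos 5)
Your proof is correct and takes essentially the same approach as the paper's: part $(i)$ follows by substituting $Z=X$ into the supremum, and part $(ii)$ combines the supremum characterization with nonnegativity in one direction and negates the supremum in the other. The only difference is that you explicitly justify the equivalence between the componentwise conditions \eqref{eq:VI2} and the aggregated inequality $\tr{(Z-X^*)^T F(X^*)}\geq 0$ via the block-diagonal reduction of Lemma~\ref{lemma:nash} --- a step the paper's proof leaves implicit --- which is welcome extra rigor rather than a different argument.
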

The algorithms are run for a fixed number of iterations $T$. We plot the gap function for different number of transmitter antennas ($n$) and receiver antennas ($m$). We also plot the gap function for different values of $\sigma$ including $0.5, 1, 5$. We use MATLAB to run the algorithms and CVX software to solve the optimization Problem \eqref{gap3}. Computational experiments are performed using the same PC running on an Intel Core i5-520M 2.4 GHz processor with 4 GB RAM.
\subsection{Averaging and Non-averaging Matrix Stochastic Mirror Descent methods}
First, we look into the first 100 iterations in one sample path to see the impact of averaging on the initial performance of matrix stochastic mirror descent (M-SMD) algorithm. Figure \ref{fig:twoplots} compares the performance of averaging stochastic mirror descent (A-M-SMD) algorithm with M-SMD in the first 100 iterations. The pair of $(n,m)$ denotes the number of transmitter and receiver antennas. The vertical axis displays the logarithm of gap function \eqref{gap3} while the horizontal axis displays the iteration number. In these plots, the blue (dash-dot) and black (solid) curves correspond to the M-SMD and A-M-SMD algorithms, respectively. We observe in Figure \ref{fig:twoplots} that A-M-SMD algorithm outperforms the M-SMD in most of the experiments. Importantly, A-M-SMD is significantly more robust with respect to: (i) the imperfections and uncertainty ($\sigma$); and (ii) problem size (the number of transmitter and receiver antennas). 
Then, we run both A-M-SMD algorithm and M-SMD for $T=4000$ iterations and plot their performance in Figure \ref{fig:fourplots}. In this figure, the vertical axis displays the logarithm of expected gap function \eqref{gap3} while the horizontal axis displays the iteration number. The expectation is taken over $ Z _t$, we repeat the algorithm for $10$ sample paths and obtain the average of the gap function. For comparison purposes, we also plot the performance of M-SMD and A-M-SMD algorithms starting from a different initial point with a better gap function value. This point is obtained by running the algorithm for 400 iterations and saving the best solution $X $ to \eqref{gap3} and its corresponding $ Y$.  In these plots, the blue (dash-dot) and magenta (solid diamond) curves correspond to the M-SMD with the initial solution $X _0=X _0^1=\mathbf I_n/n$ and $X _0=X _0^2=X _{400}$ respectively, and the black (solid) and red (dash-dot triangle) curves display the A-M-SMD algorithm with the initial solution $X _0=X _0^1=\mathbf I_n/n$ and $X _0=X _0^2=X _{400}$ respectively. As it can be seen in Figure \ref{fig:fourplots}, A-M-SMD outperforms M-SMD in all experiments. In particular, A-M-SMD is significantly more robust with respect to (i) the imperfections ($\sigma$); and (ii) problem size. It is also observed that A-M-SMD converges to the strong solution with rate of convergence of $\mathcal O(1/T)$ while M-SMD does not converge for larger values of $\sigma$. Moreover, from Figure \ref{fig:fourplots}, it is evident that the A-M-SMD has better performance compared to M-SMD irrespective to the initial solution. 
\begin{table}[h]
\setlength{\tabcolsep}{3pt}
\centering
 \begin{tabular}{c | c  c  c}
$(n,m)$ & $\sigma=0.5$ & $\sigma=1$ & $\sigma=5$ \\ \hline\\
(2,4)
&
\begin{minipage}{.29\textwidth}
\includegraphics[scale=.3, angle=0]{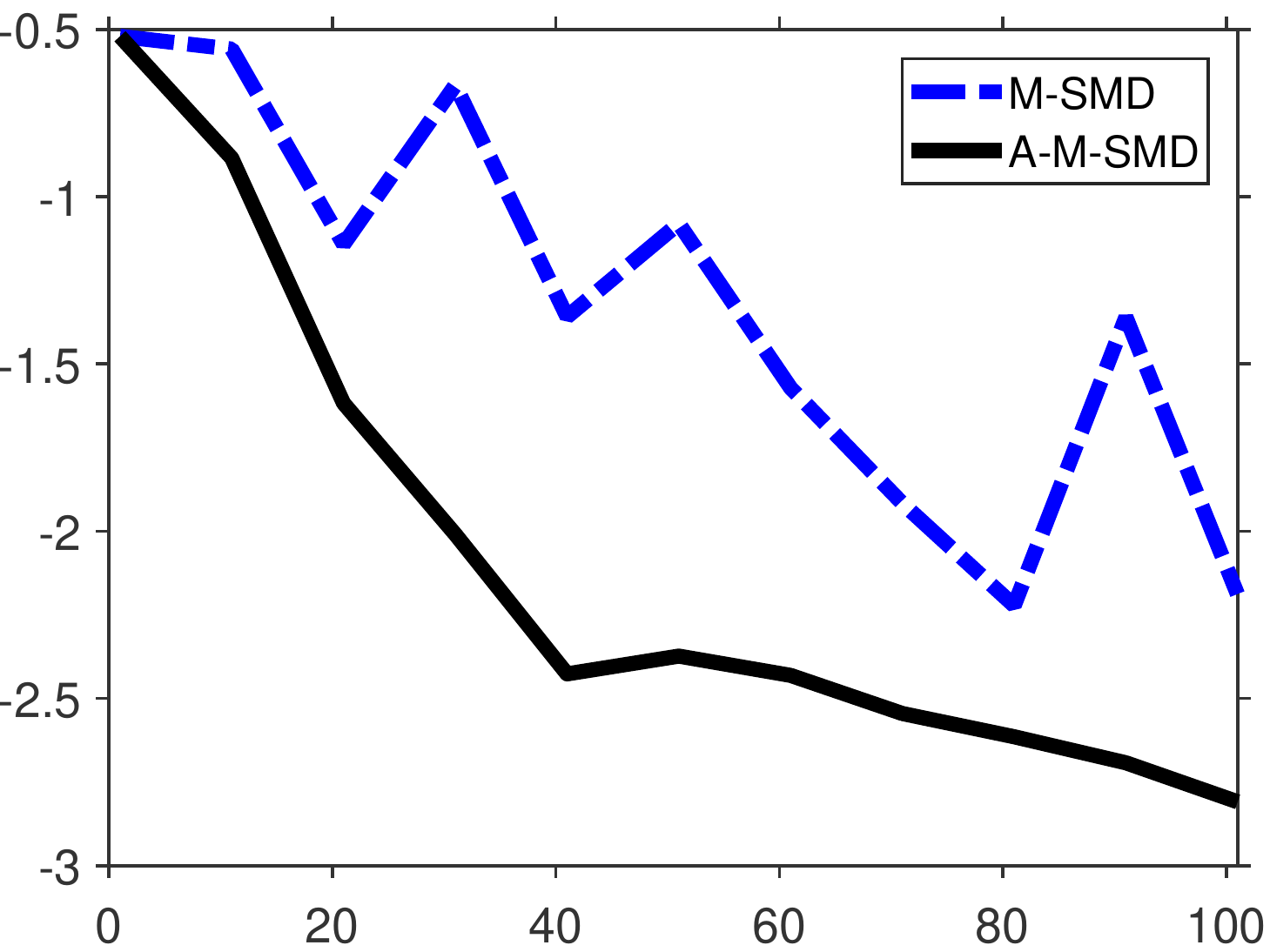}
\end{minipage}
&
\begin{minipage}{.29\textwidth}
\includegraphics[scale=.3, angle=0]{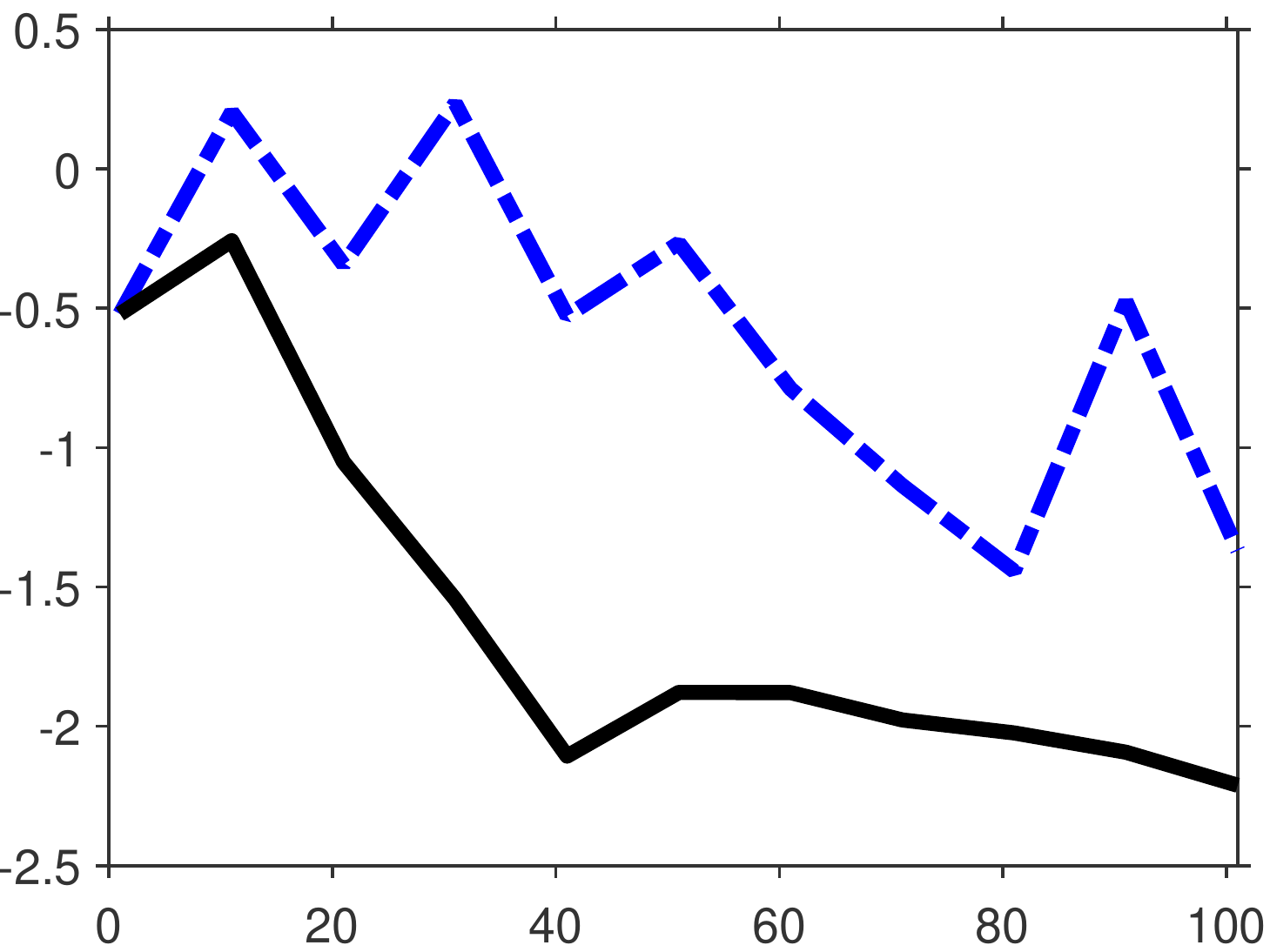}
\end{minipage}
	&
\begin{minipage}{.29\textwidth}
\includegraphics[scale=.3, angle=0]{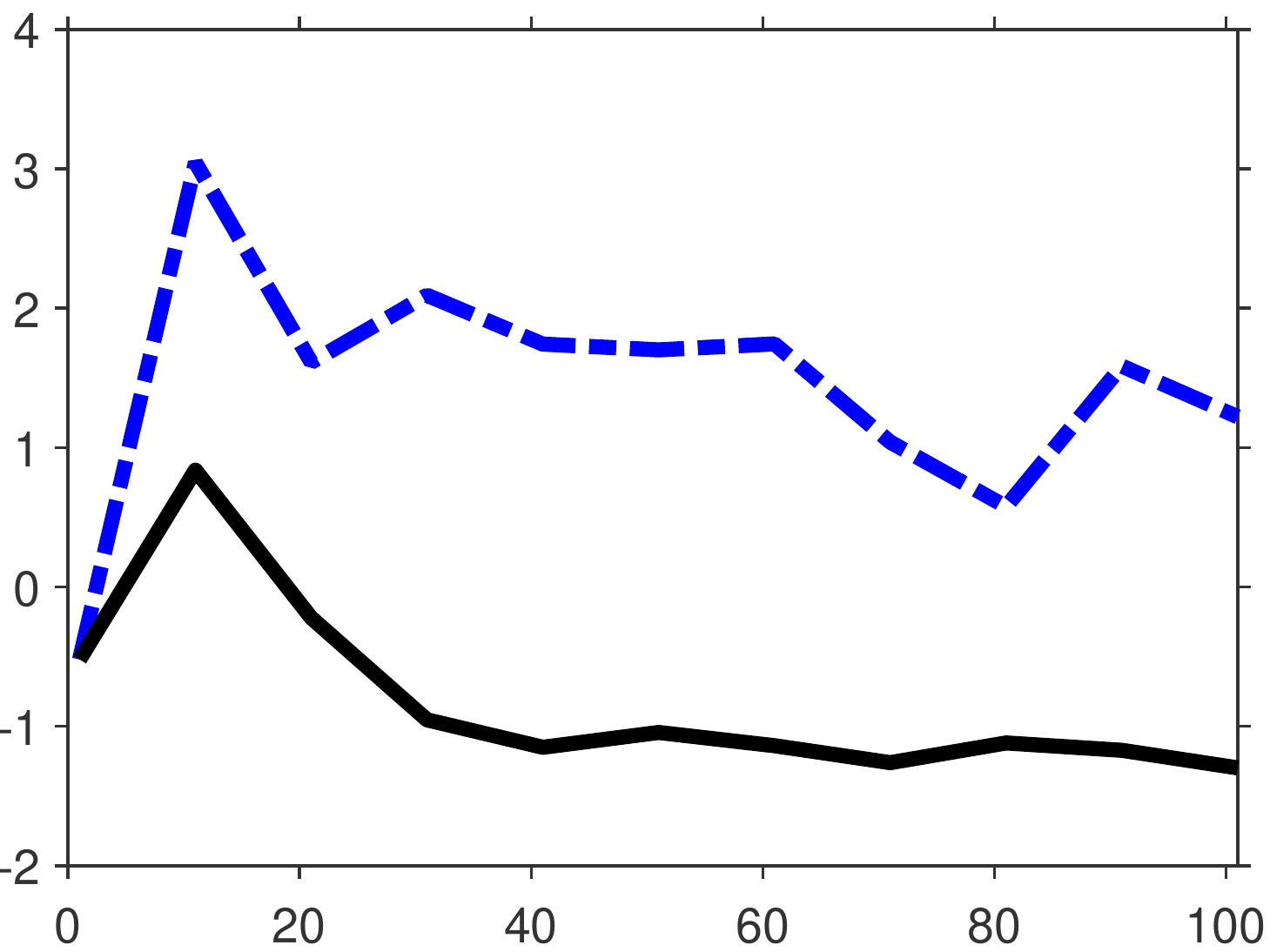}
\end{minipage}
\\
(4,2)
&
\begin{minipage}{.29\textwidth}
\includegraphics[scale=.3, angle=0]{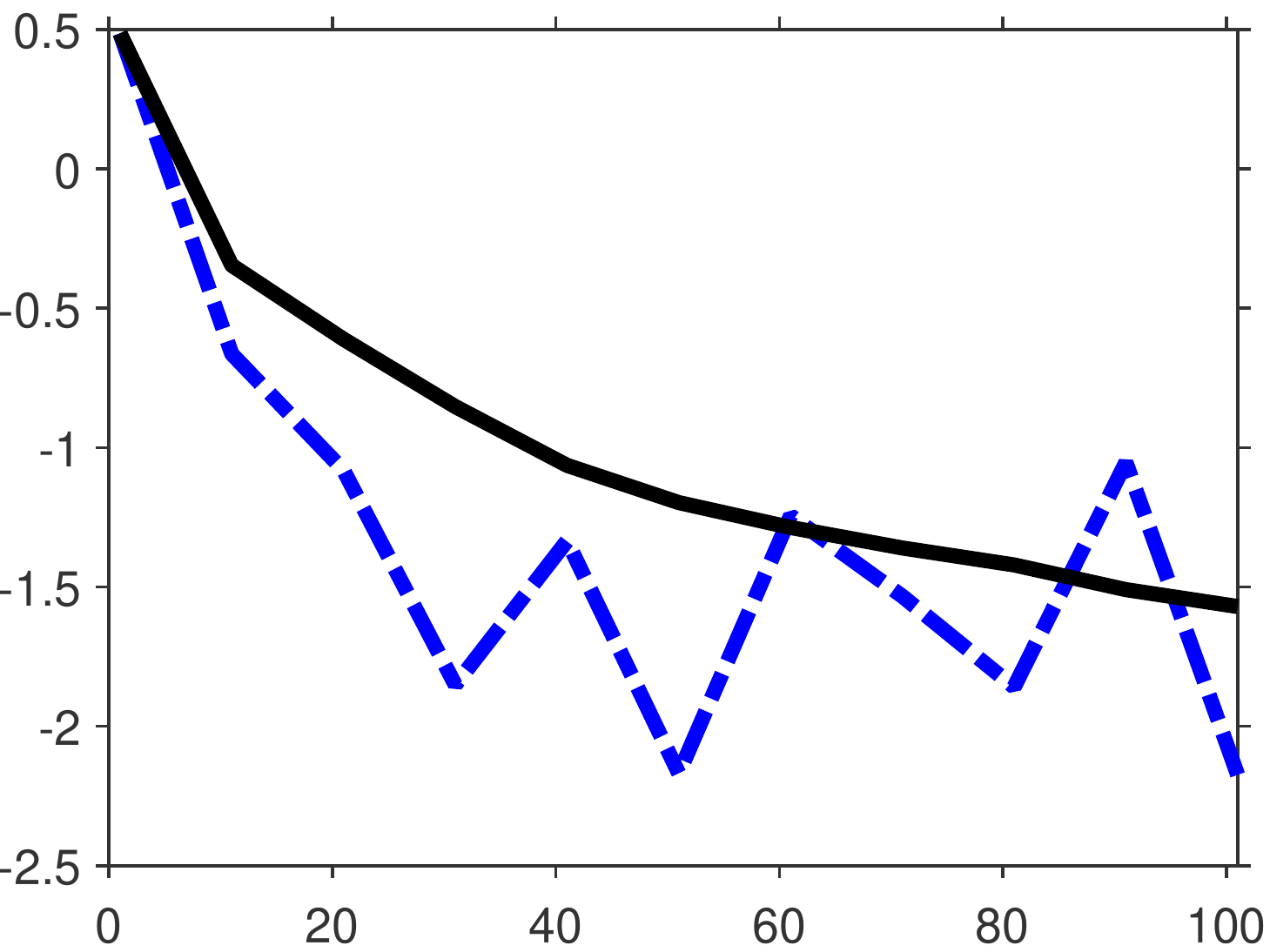}
\end{minipage}
&
\begin{minipage}{.29\textwidth}
\includegraphics[scale=.3, angle=0]{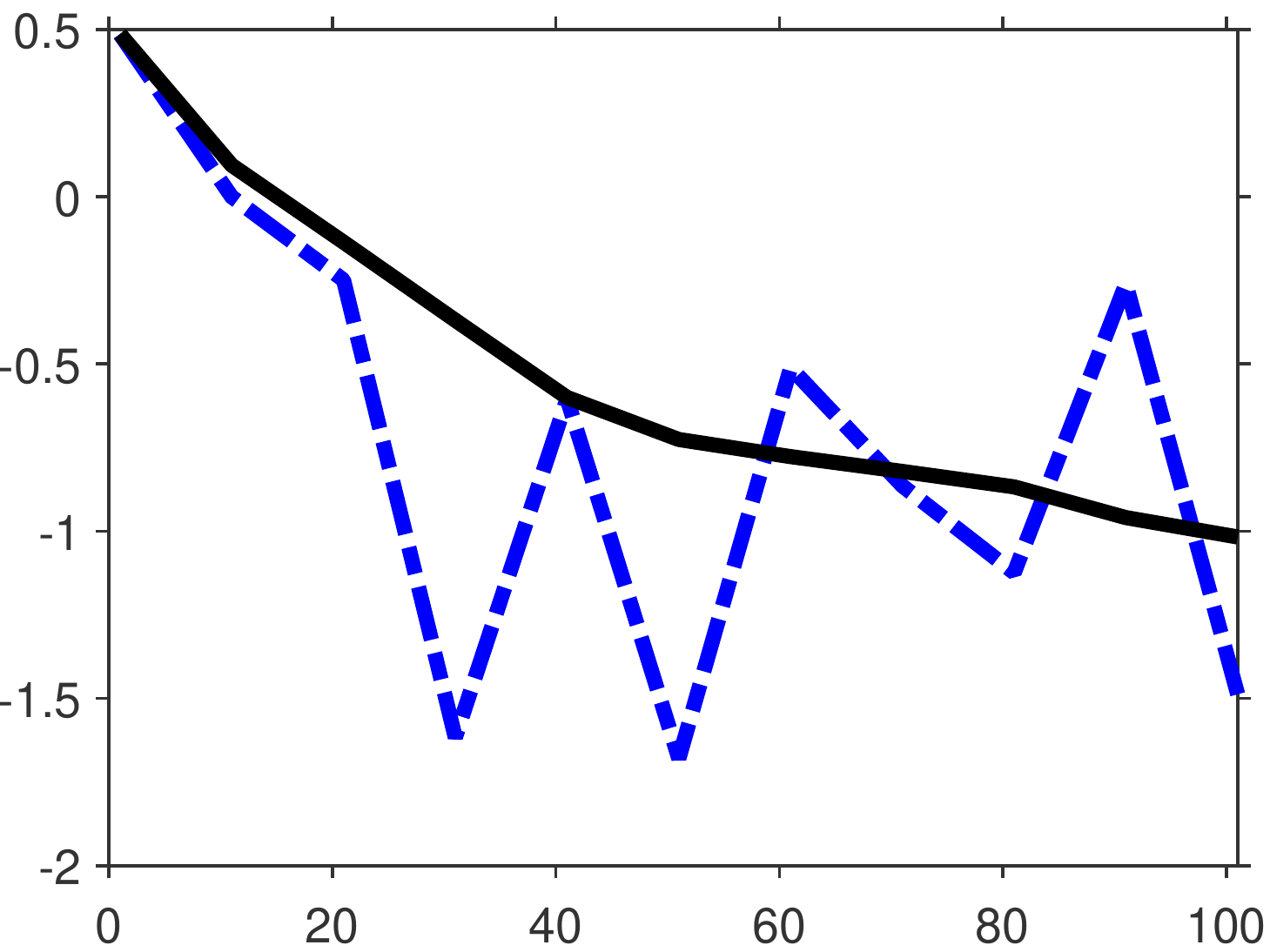}
\end{minipage}
&
\begin{minipage}{.29\textwidth}
\includegraphics[scale=.3, angle=0]{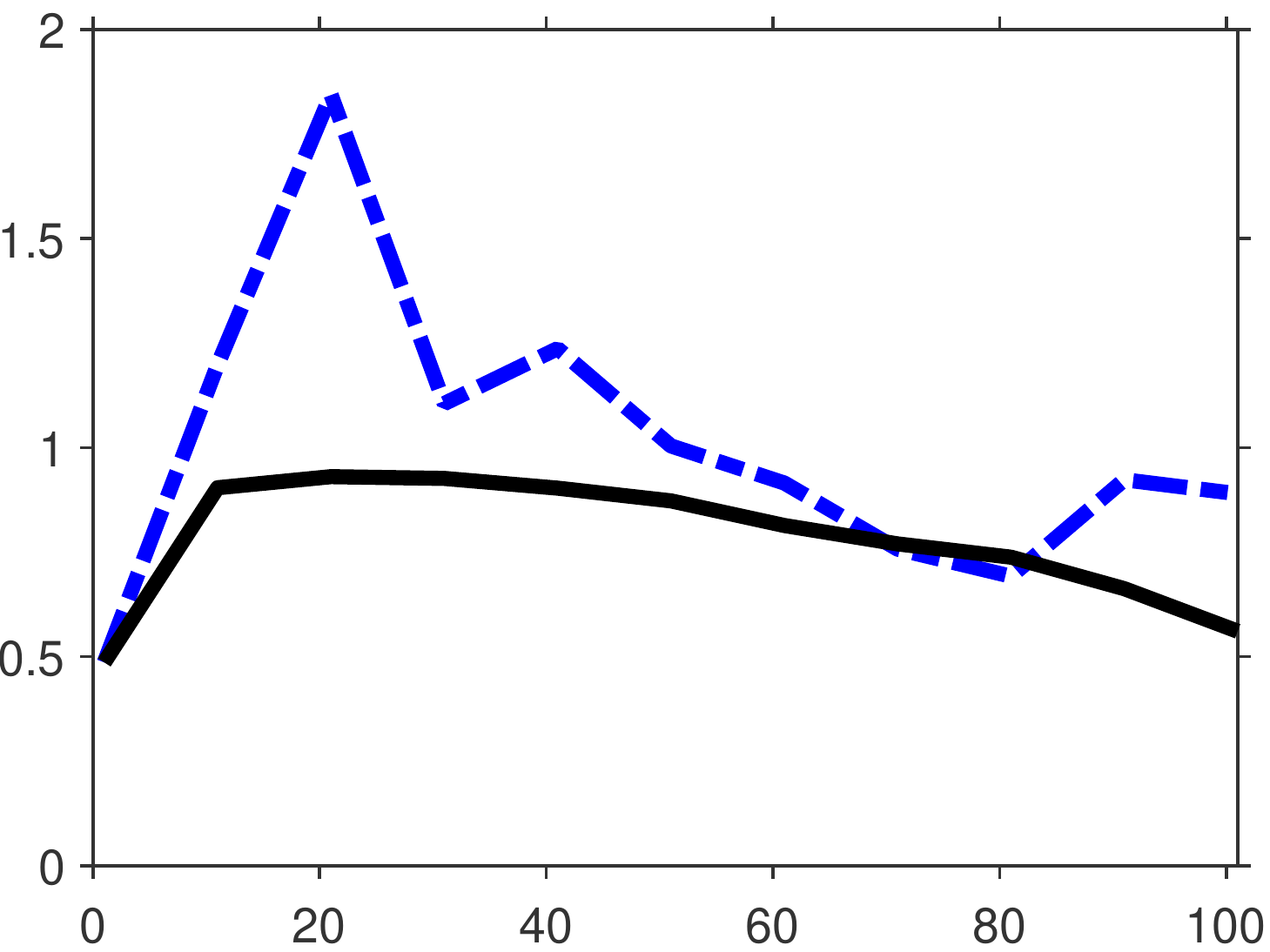}
\end{minipage}
\\
(4,4)
&
\begin{minipage}{.29\textwidth}
\includegraphics[scale=.3, angle=0]{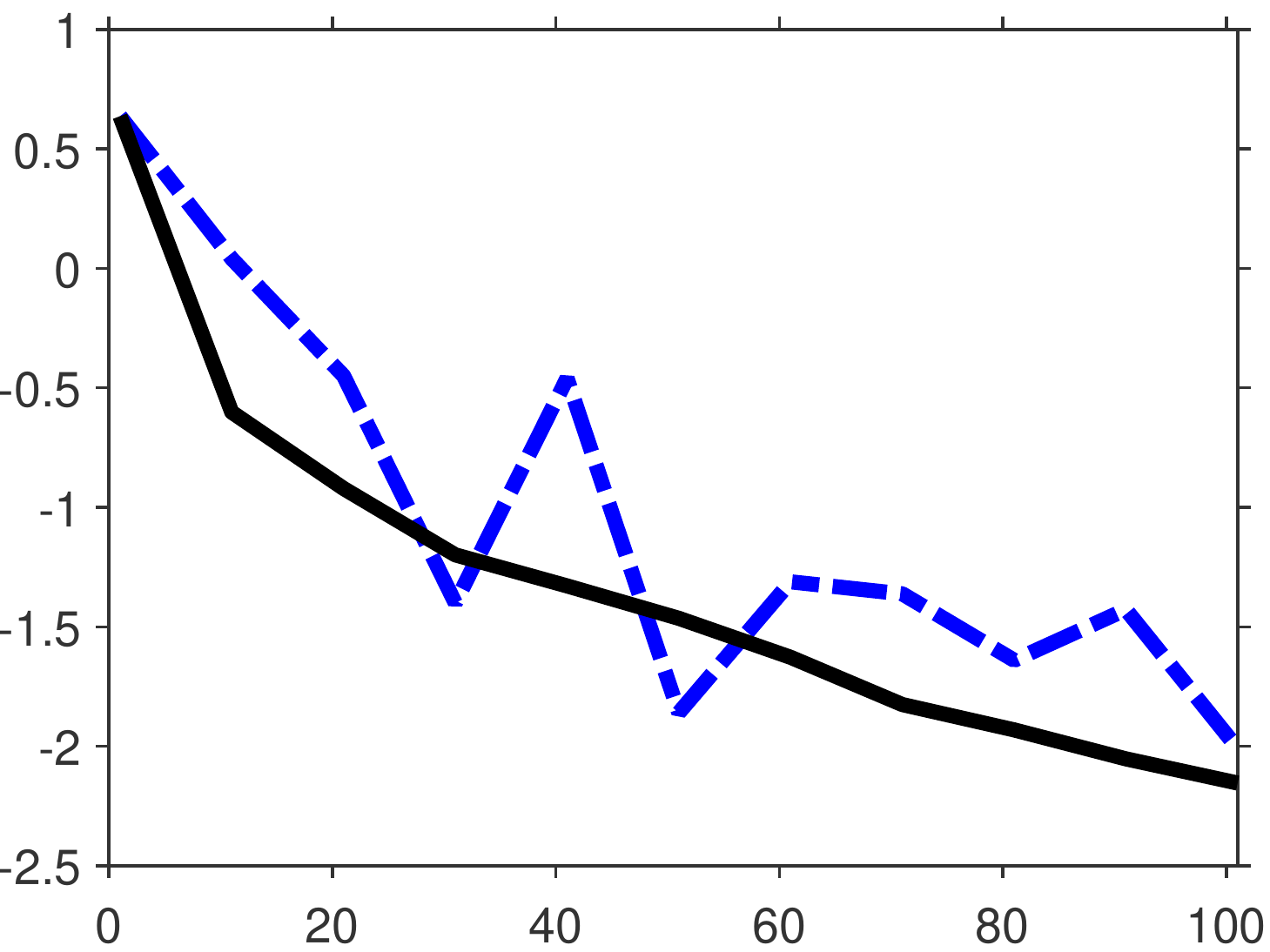}
\end{minipage}
&
\begin{minipage}{.29\textwidth}
\includegraphics[scale=.3, angle=0]{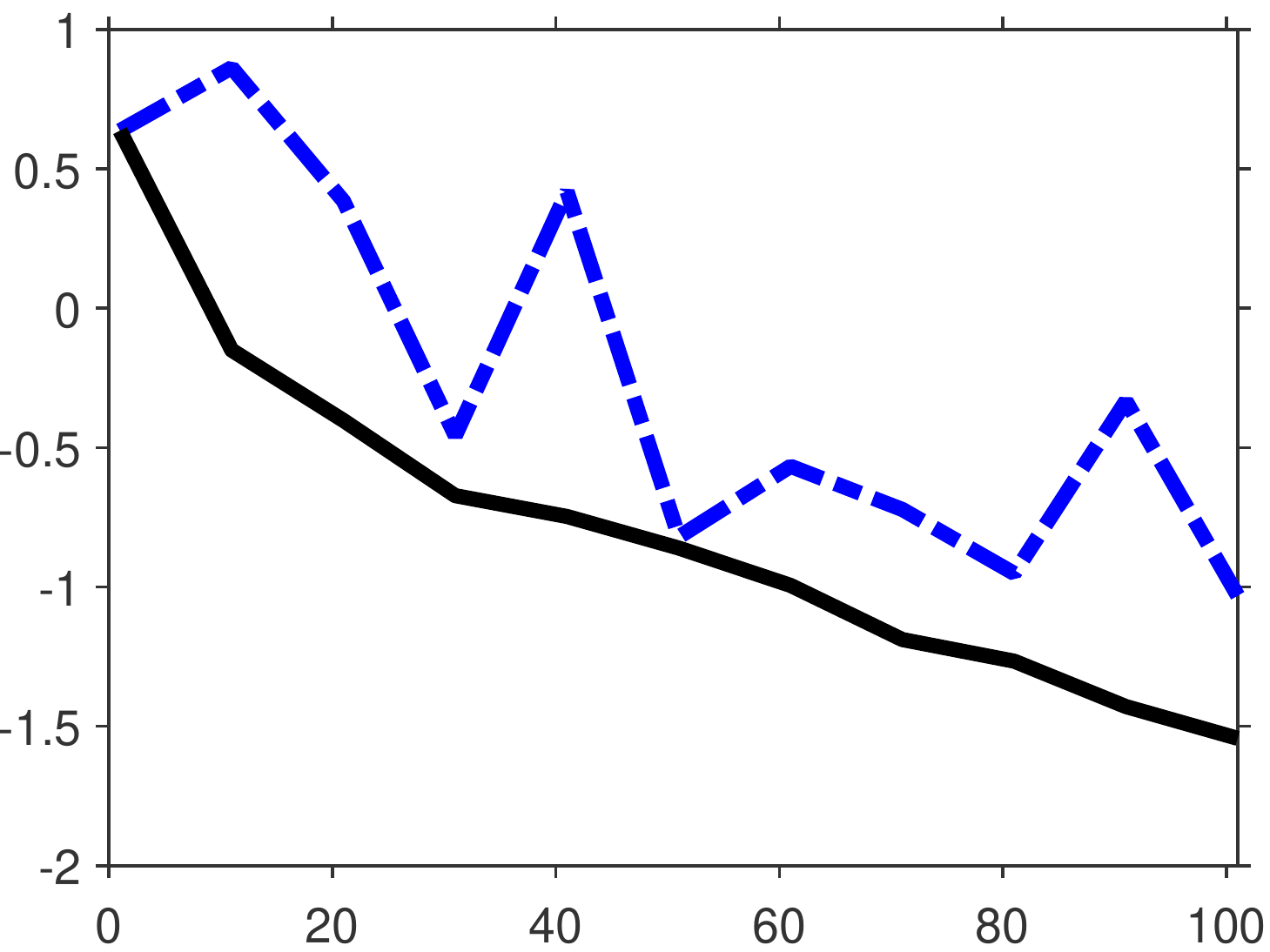}
\end{minipage}
&
\begin{minipage}{.29\textwidth}
\includegraphics[scale=.3, angle=0]{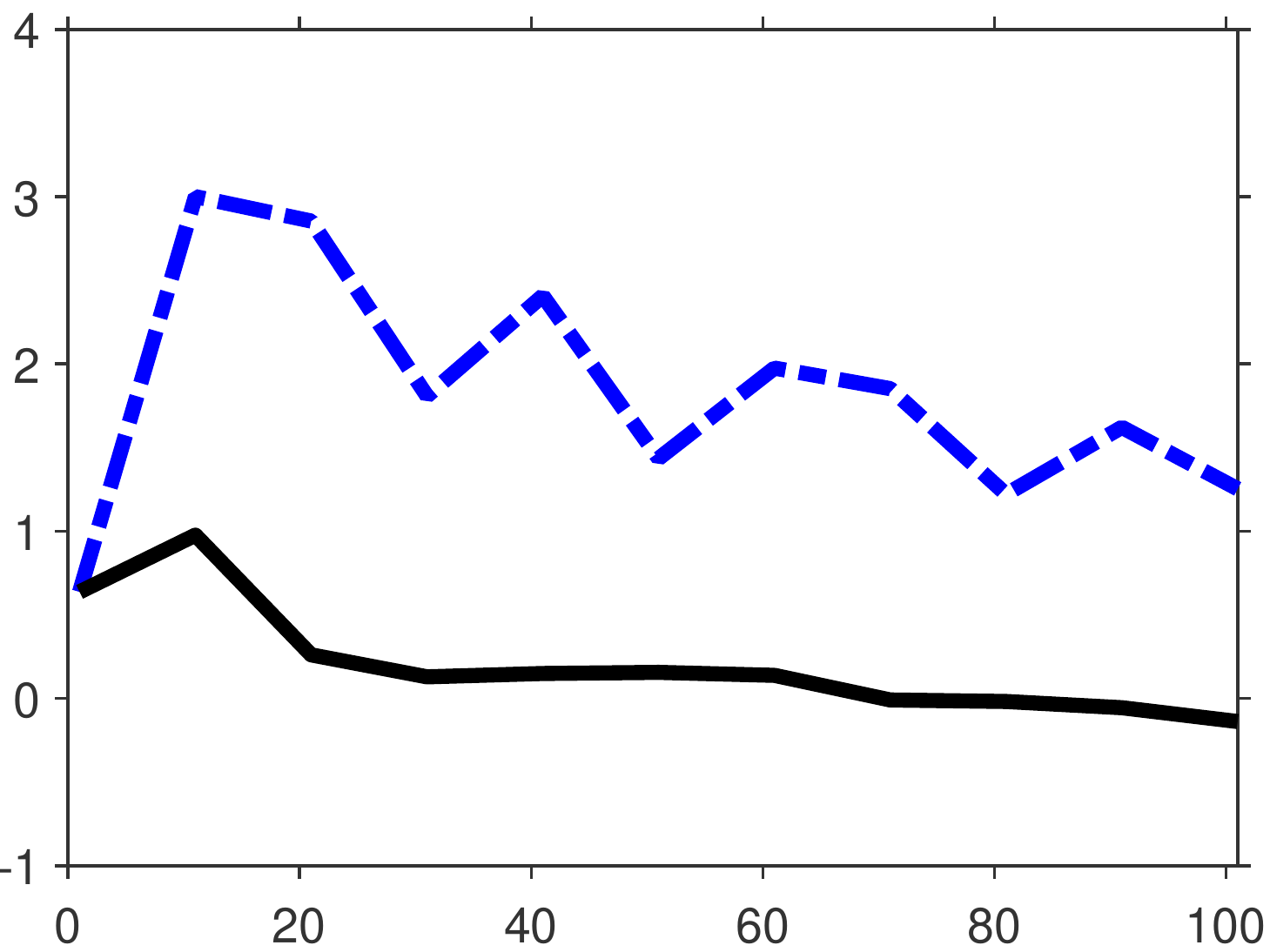}
\end{minipage}
\end{tabular}
\vspace{.1in}
\captionof{figure}{Comparison of M-SMD and A-M-SMD w.r.t. problem size ($n,m$) and uncertainty ($\sigma$) for 100 iterations}
\label{fig:twoplots}
\end{table}
\begin{table}[h]
\setlength{\tabcolsep}{3pt}
\centering
 \begin{tabular}{c|c c c}
$(n,m)$ & $\sigma=0.5$ & $\sigma=1$ & $\sigma=5$ \\ \hline\\
(2,4)
&
\begin{minipage}{.29\textwidth}
\includegraphics[scale=.3, angle=0]{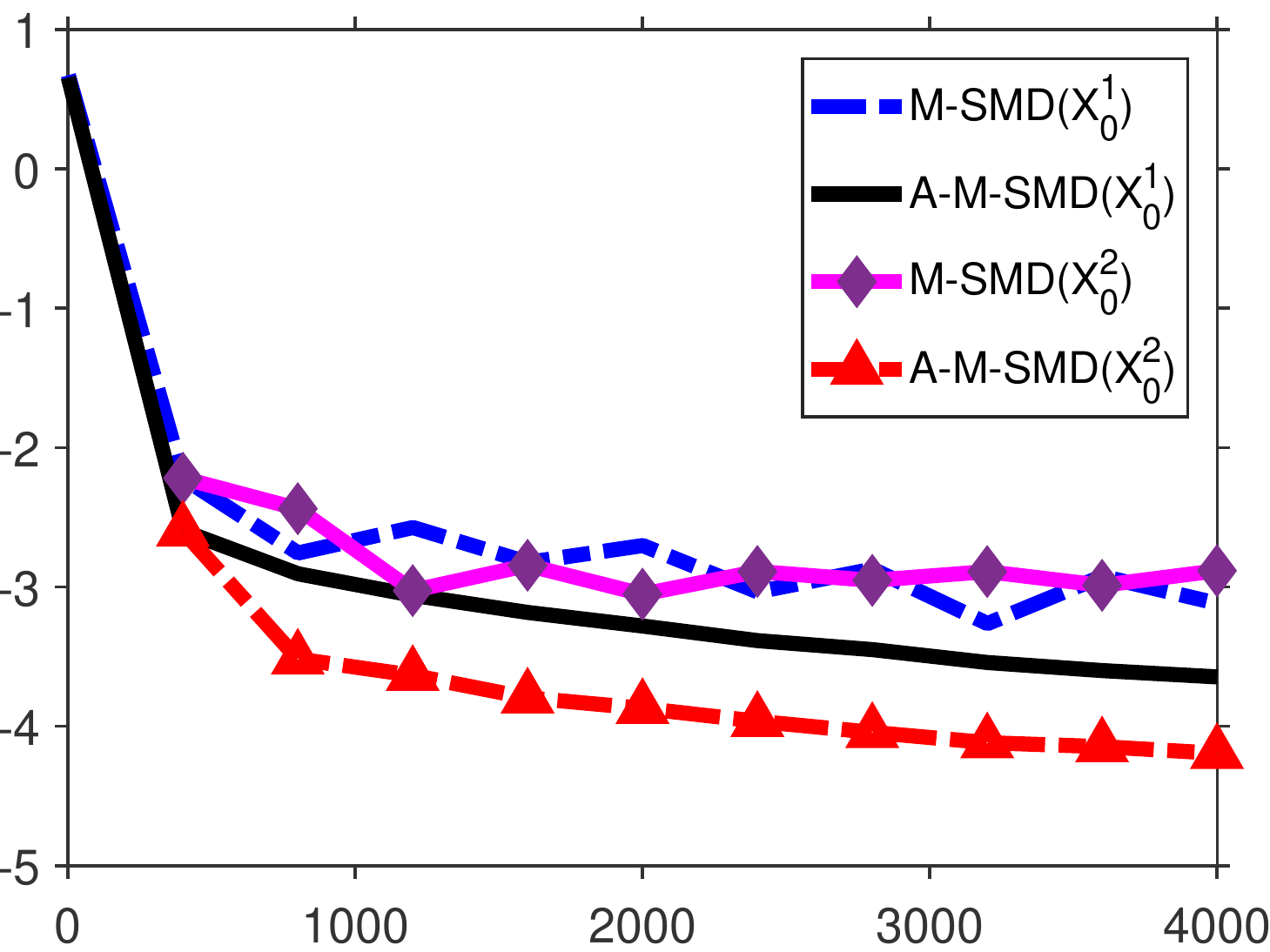}
\end{minipage}
&
\begin{minipage}{.29\textwidth}
\includegraphics[scale=.3, angle=0]{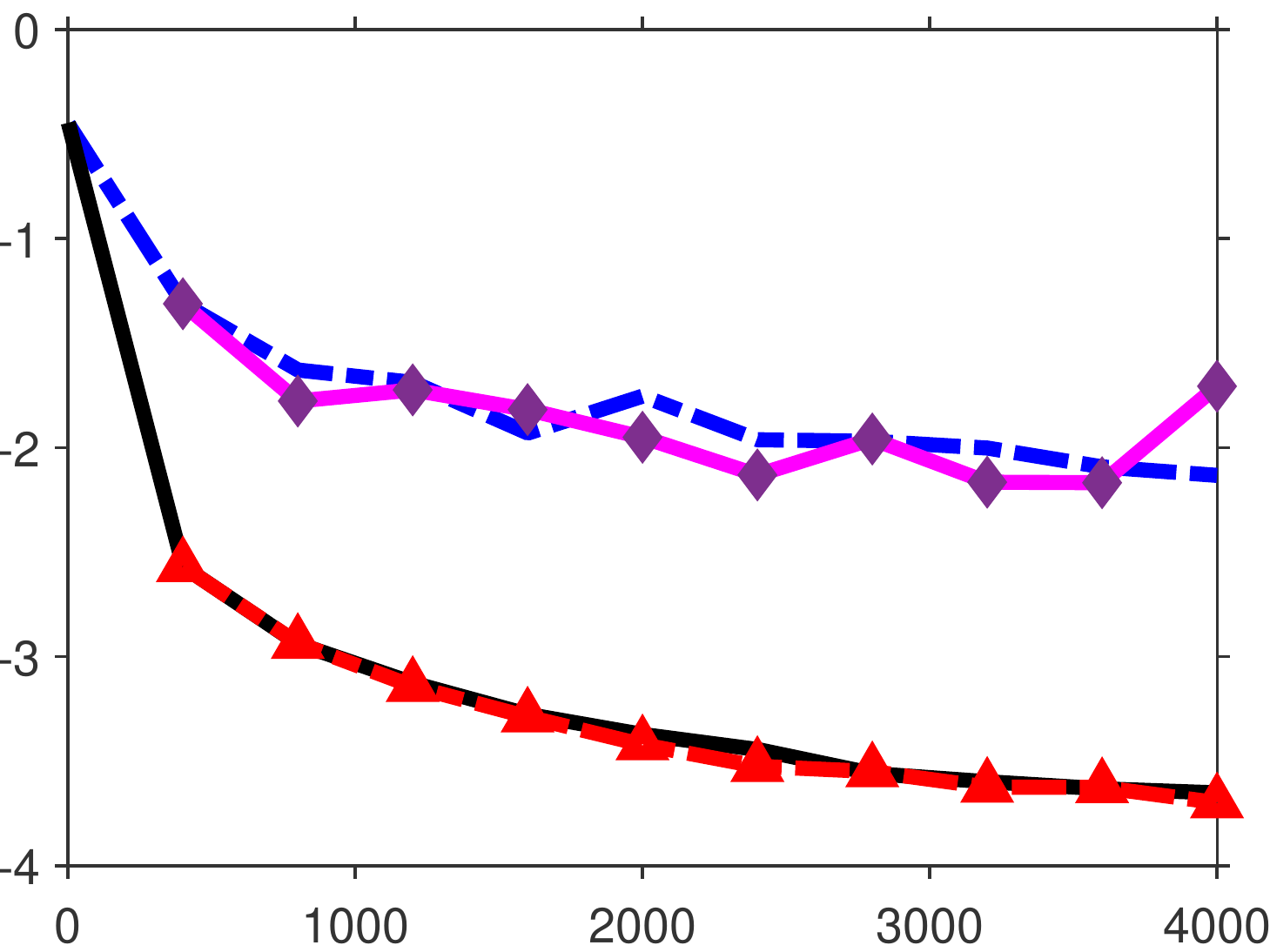}
\end{minipage}
	&
\begin{minipage}{.29\textwidth}
\includegraphics[scale=.3, angle=0]{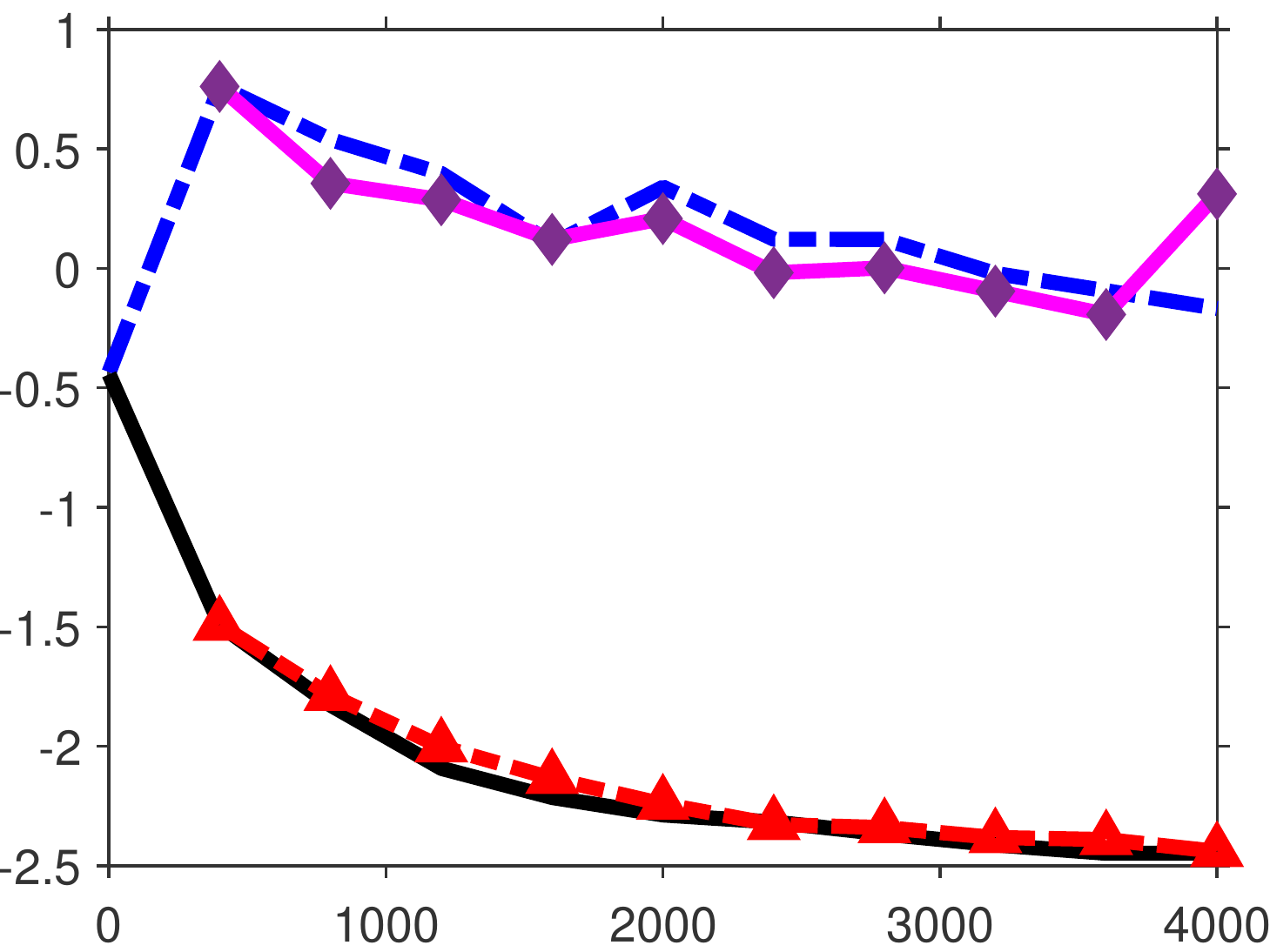}
\end{minipage}
\\
(4,2)
&
\begin{minipage}{.29\textwidth}
\includegraphics[scale=.3, angle=0]{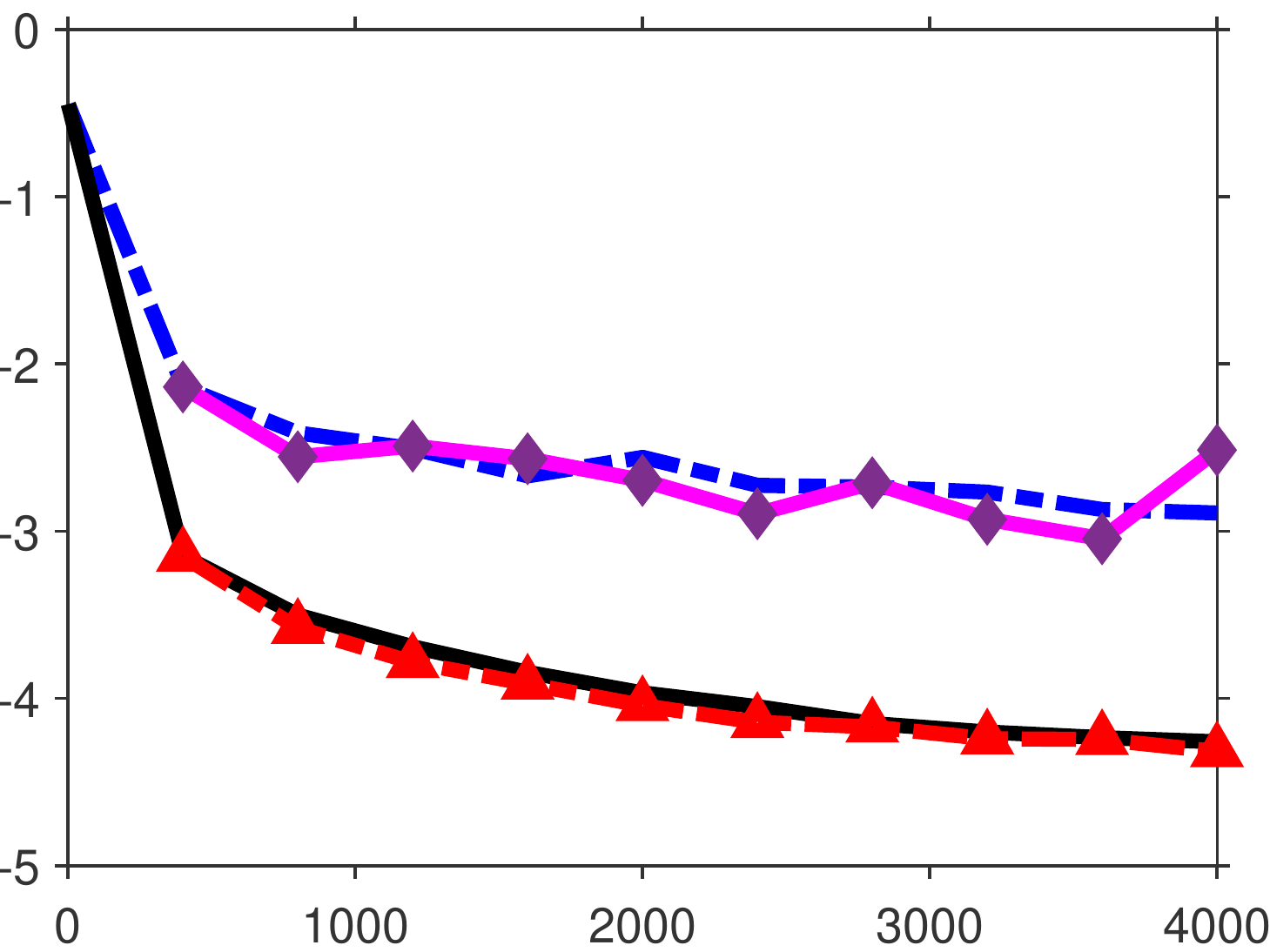}
\end{minipage}
&
\begin{minipage}{.29\textwidth}
\includegraphics[scale=.3, angle=0]{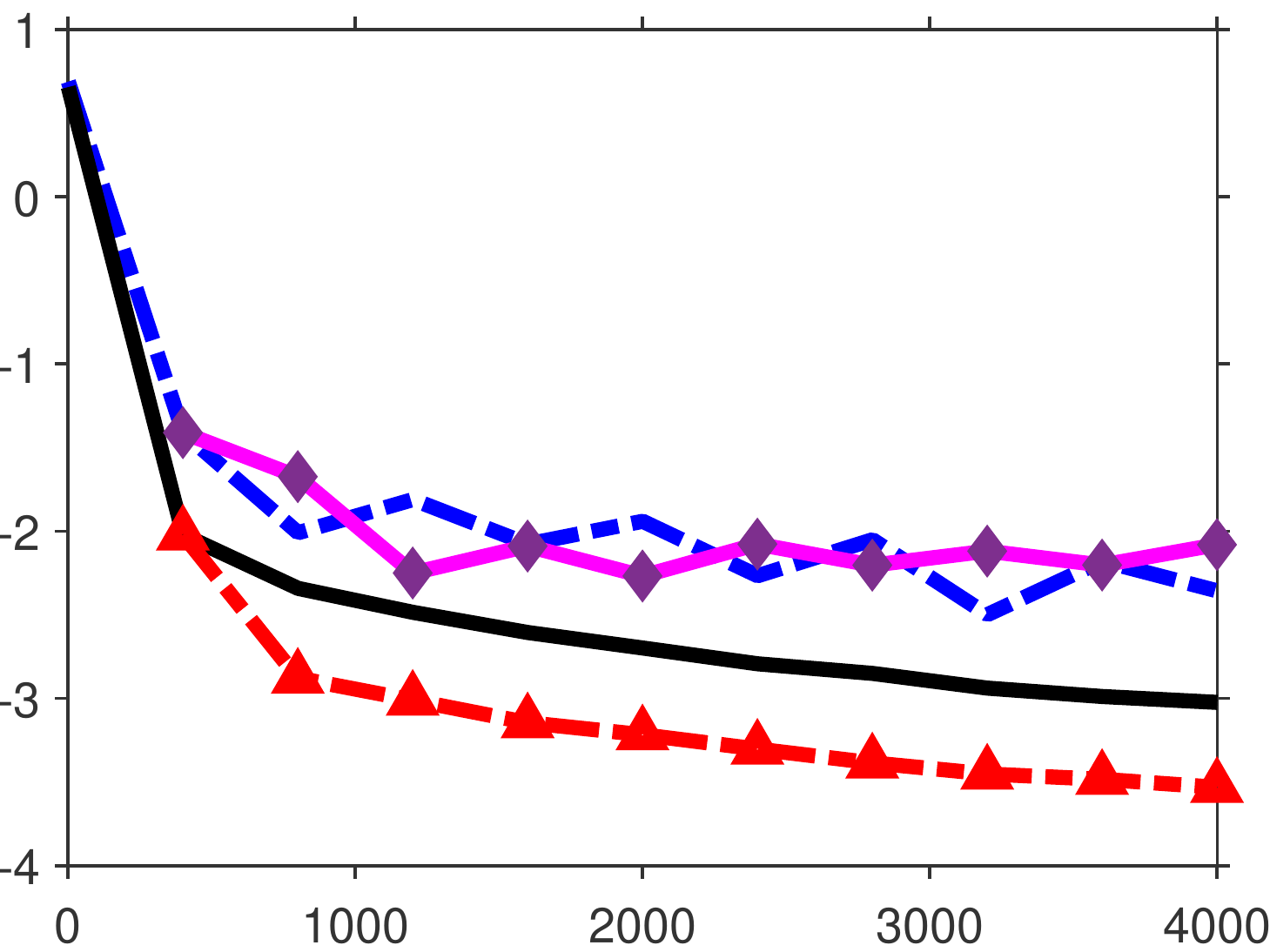}
\end{minipage}
&
\begin{minipage}{.29\textwidth}
\includegraphics[scale=.3, angle=0]{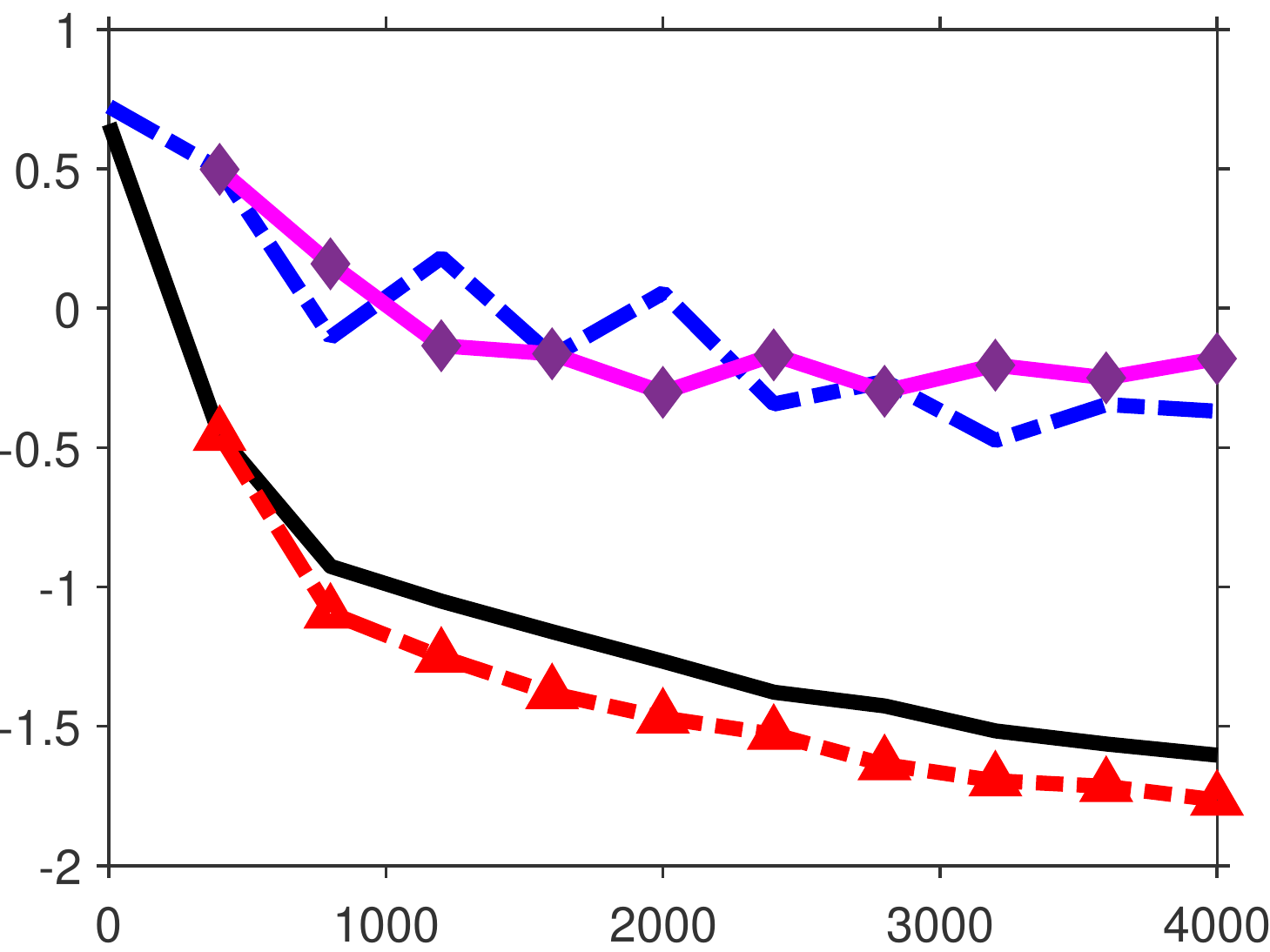}
\end{minipage}
\\
(4,4)
&
\begin{minipage}{.29\textwidth}
\includegraphics[scale=.3, angle=0]{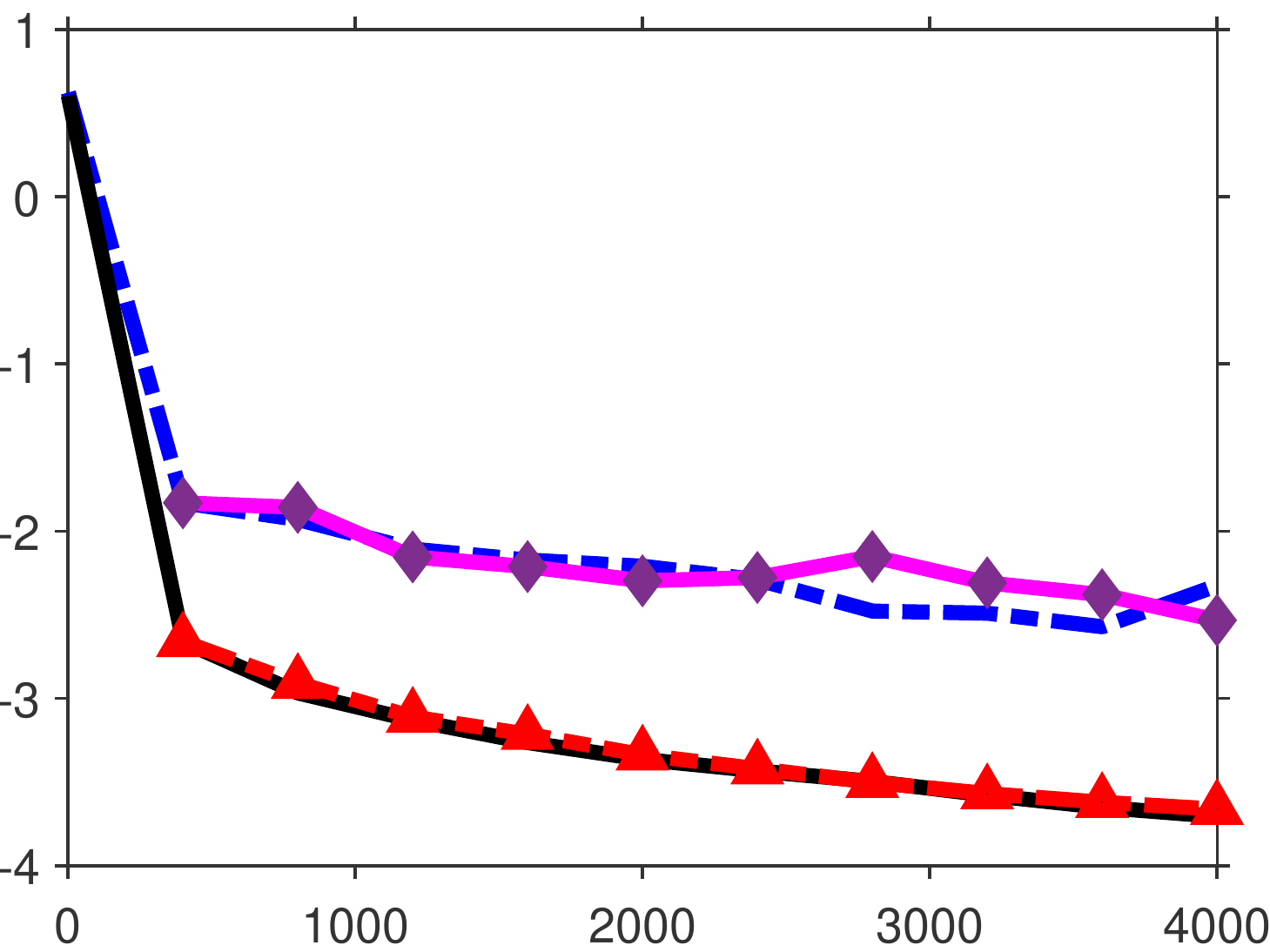}
\end{minipage}
&
\begin{minipage}{.29\textwidth}
\includegraphics[scale=.3, angle=0]{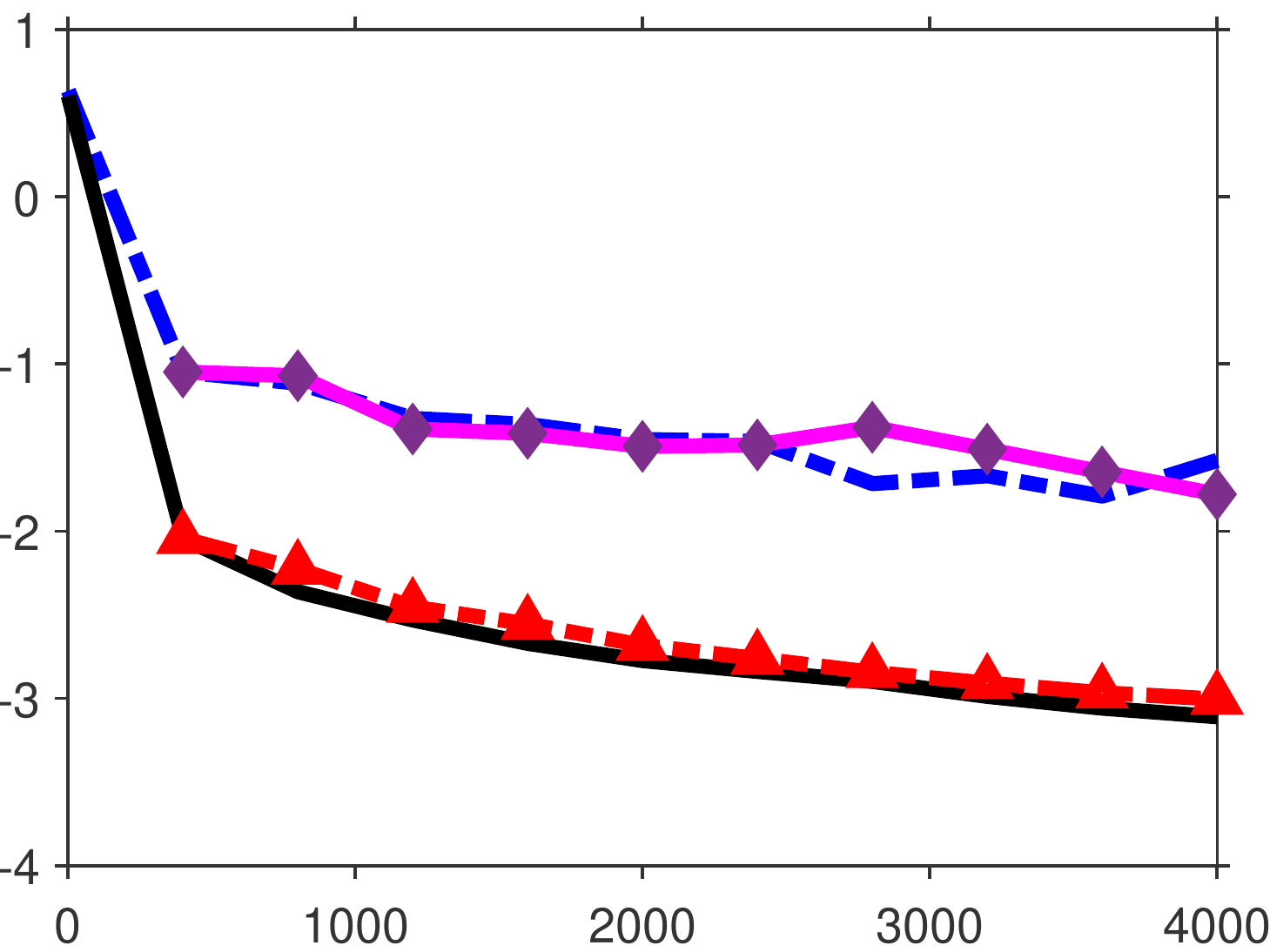}
\end{minipage}
&
\begin{minipage}{.29\textwidth}
\includegraphics[scale=.3, angle=0]{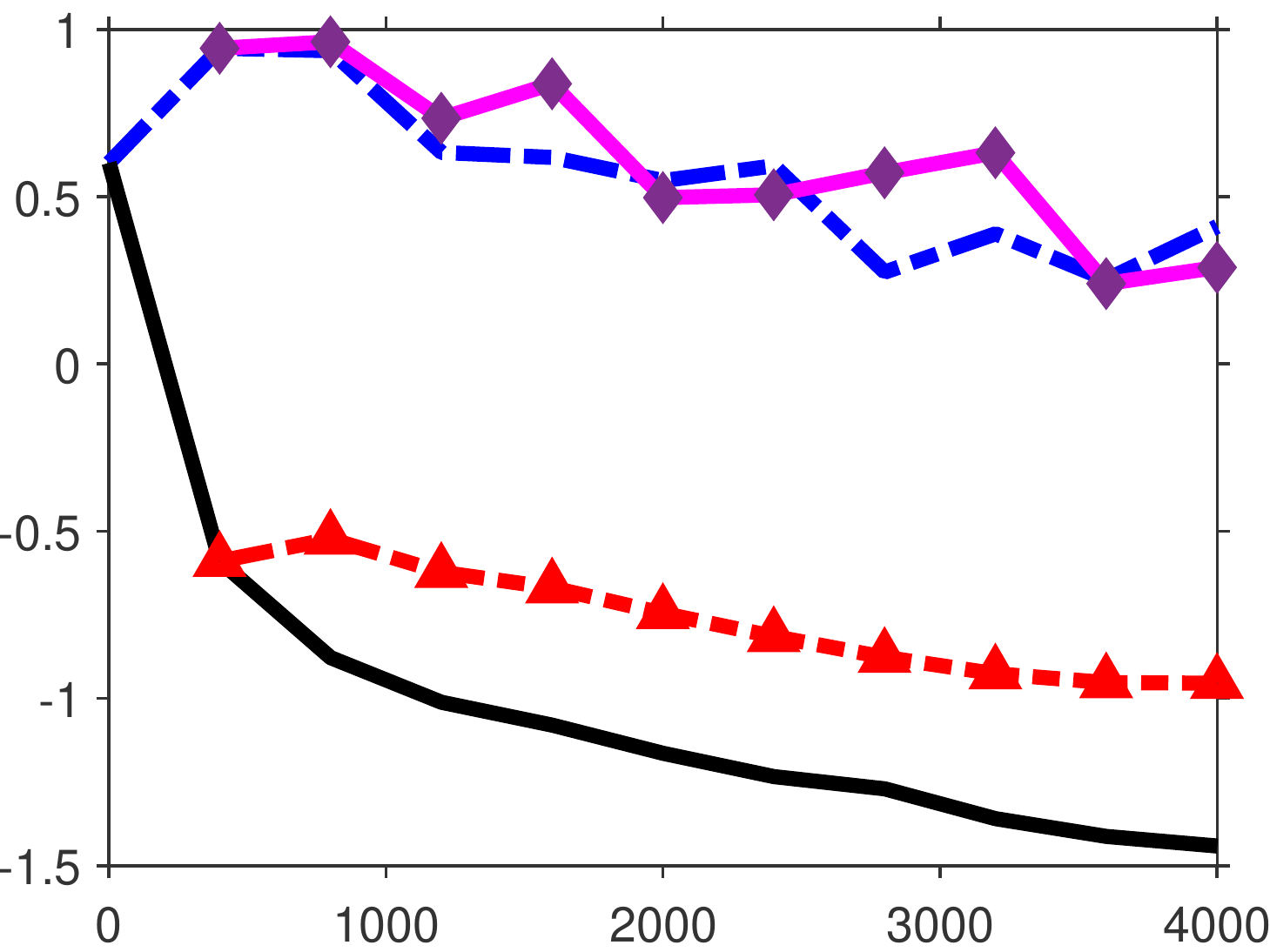}
\end{minipage}
\end{tabular}
\vspace{.1in}
\captionof{figure}{Comparison of M-SMD and A-M-SMD w.r.t. initial point ($X_0$), problem size ($n,m$), and uncertainty ($\sigma$) for 4000 iterations}
\label{fig:fourplots}
\end{table}
\begin{table}[h]
\setlength{\tabcolsep}{3pt}
\centering
 \begin{tabular}{c  c  c}
\begin{minipage}{.3\textwidth}
\includegraphics[scale=.33, angle=0]{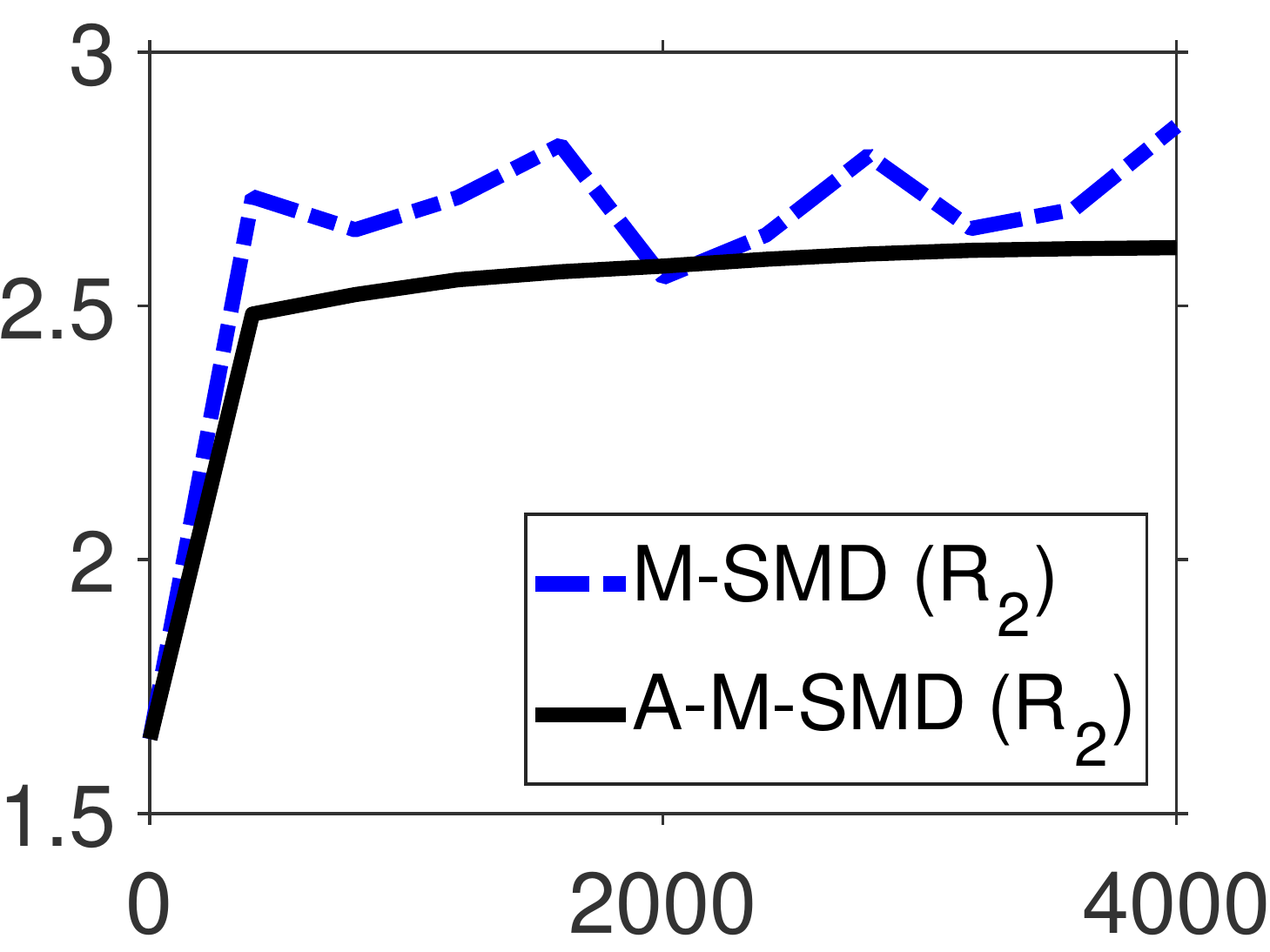}
\end{minipage}
&
\begin{minipage}{.3\textwidth}
\includegraphics[scale=.33, angle=0]{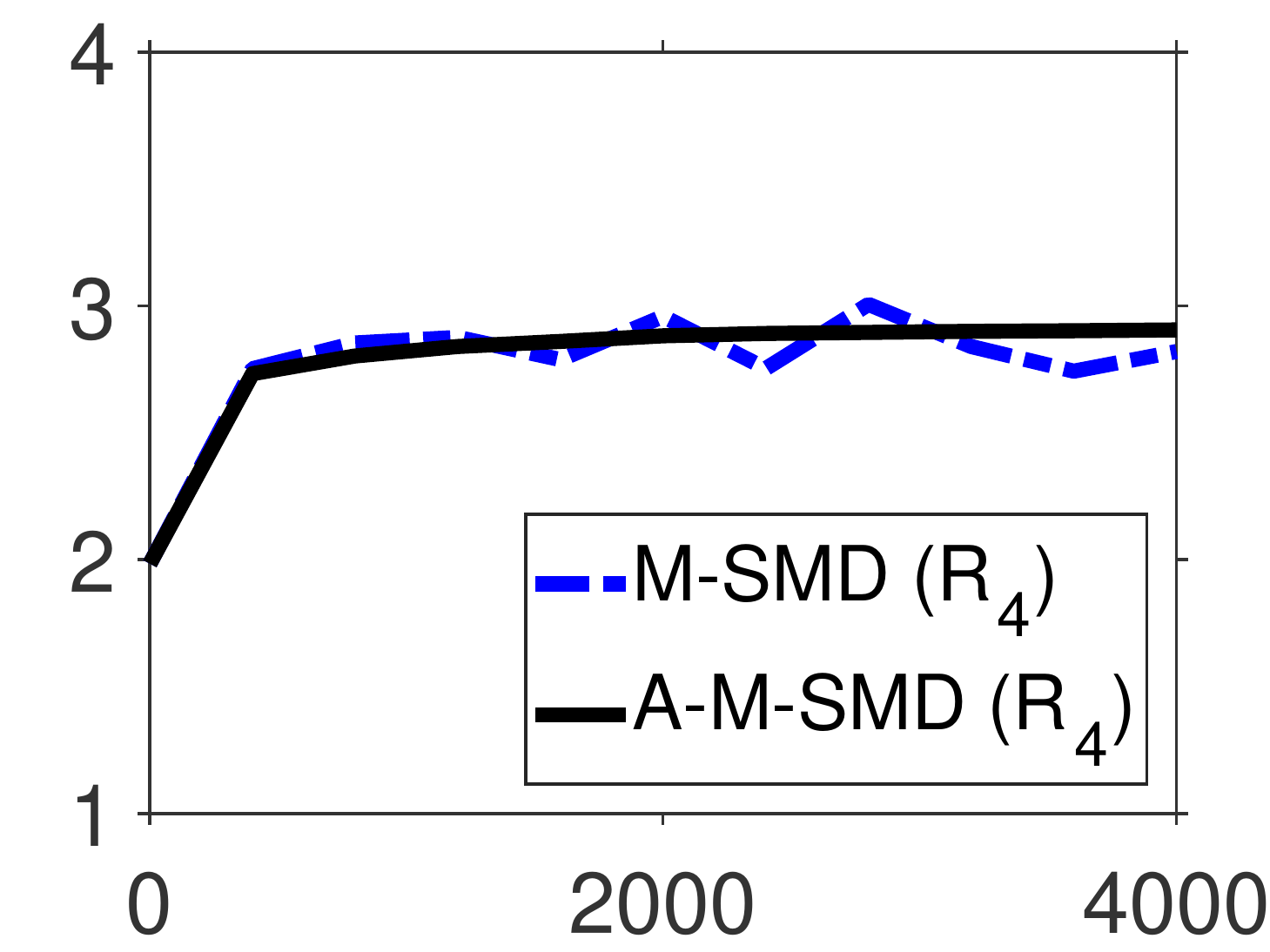}
\end{minipage}
&
\begin{minipage}{.3\textwidth}
\includegraphics[scale=.33, angle=0]{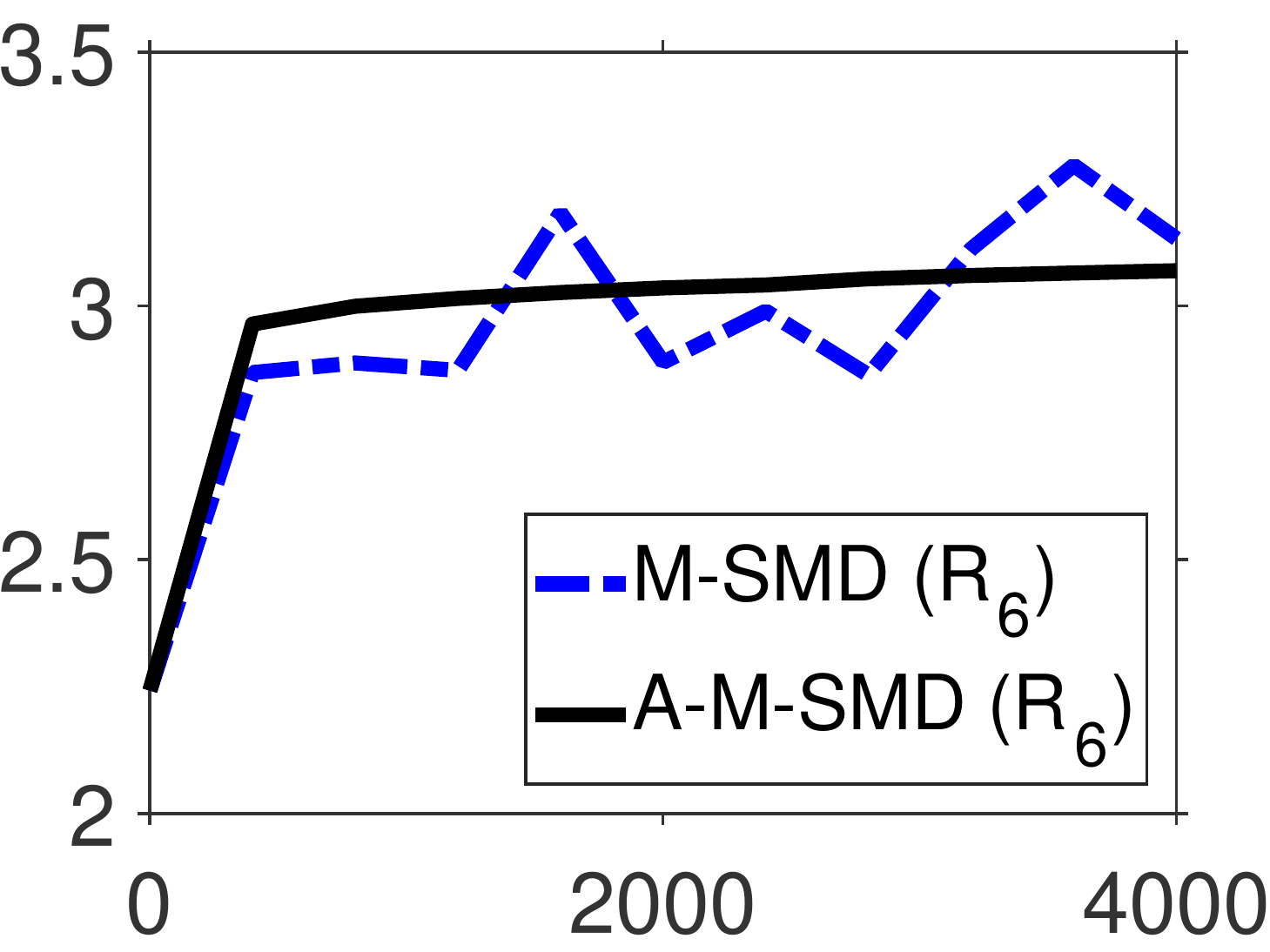}
\end{minipage}
\end{tabular}
\vspace{.1in}
\captionof{figure}{Comparison of stability of M-SMD and A-M-SMD in terms of users' objective function $R_i$ for $i=2,4,6$}
\label{fig:functionR}
\end{table}

\textbf{Stability of M-SMD and A-M-SMD:} To compare the stability of two methods, we also plot the expected objective function value $R_i$ against the iteration number in Figure \ref{fig:functionR}. Here, we choose $n=m=4$ and $\sigma=10$. The algorithm is repeated for $10$ sample paths and the average of objective function is obtained. Each plot represents the performance of both algorithms for one specific player $i \in\{1,\ldots,7\}$. As an example, the first plot compares the stability of A-M-SMD (black solid curve) and M-SMD (blue dash-dot curve) for the first user. It can be seen that for all players, the A-M-SMD algorithm converges to a strong solution very fast while the M-SMD does not converge and oscillates significantly. 
\begin{table}[h]
\setlength{\tabcolsep}{3pt}
\centering
 \begin{tabular}{c| c  c  c}
$(n,m)$ & $\sigma=0.5$ & $\sigma=1$ & $\sigma=5$ \\ \hline\\
(2,4)
&
\begin{minipage}{.29\textwidth}
\includegraphics[scale=.3, angle=0]{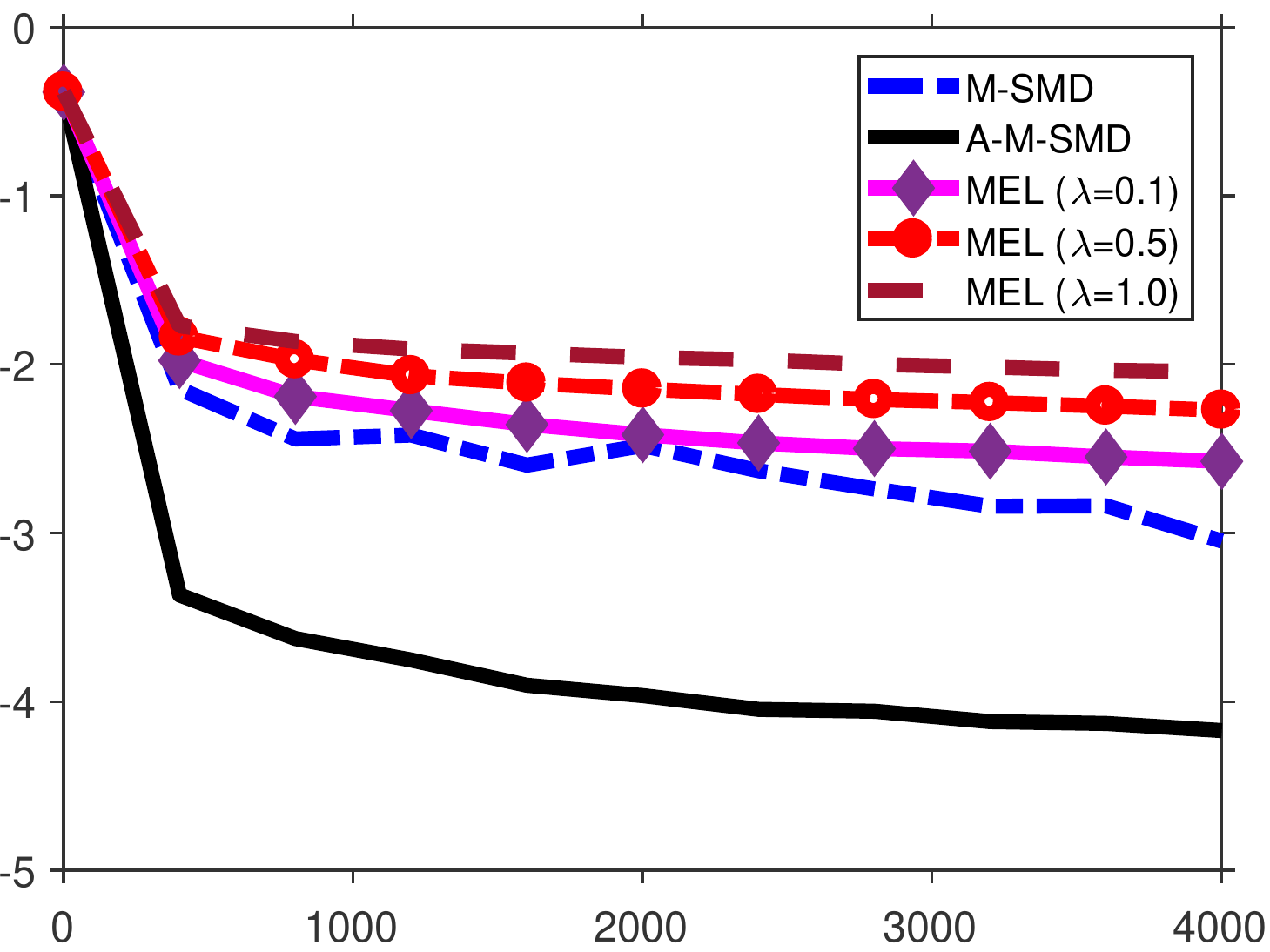}
\end{minipage}
&
\begin{minipage}{.29\textwidth}
\includegraphics[scale=.3, angle=0]{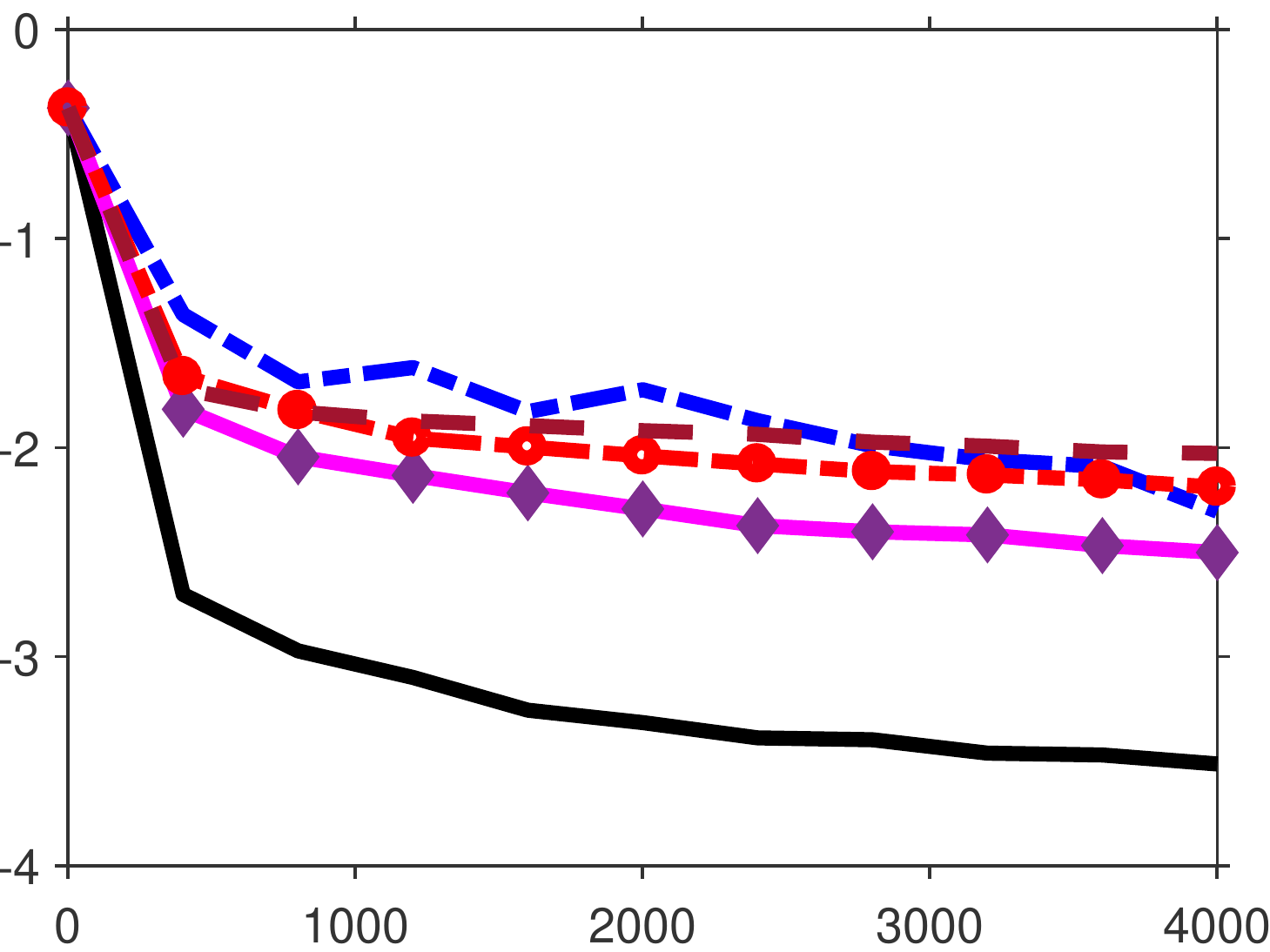}
\end{minipage}
	&
\begin{minipage}{.29\textwidth}
\includegraphics[scale=.3, angle=0]{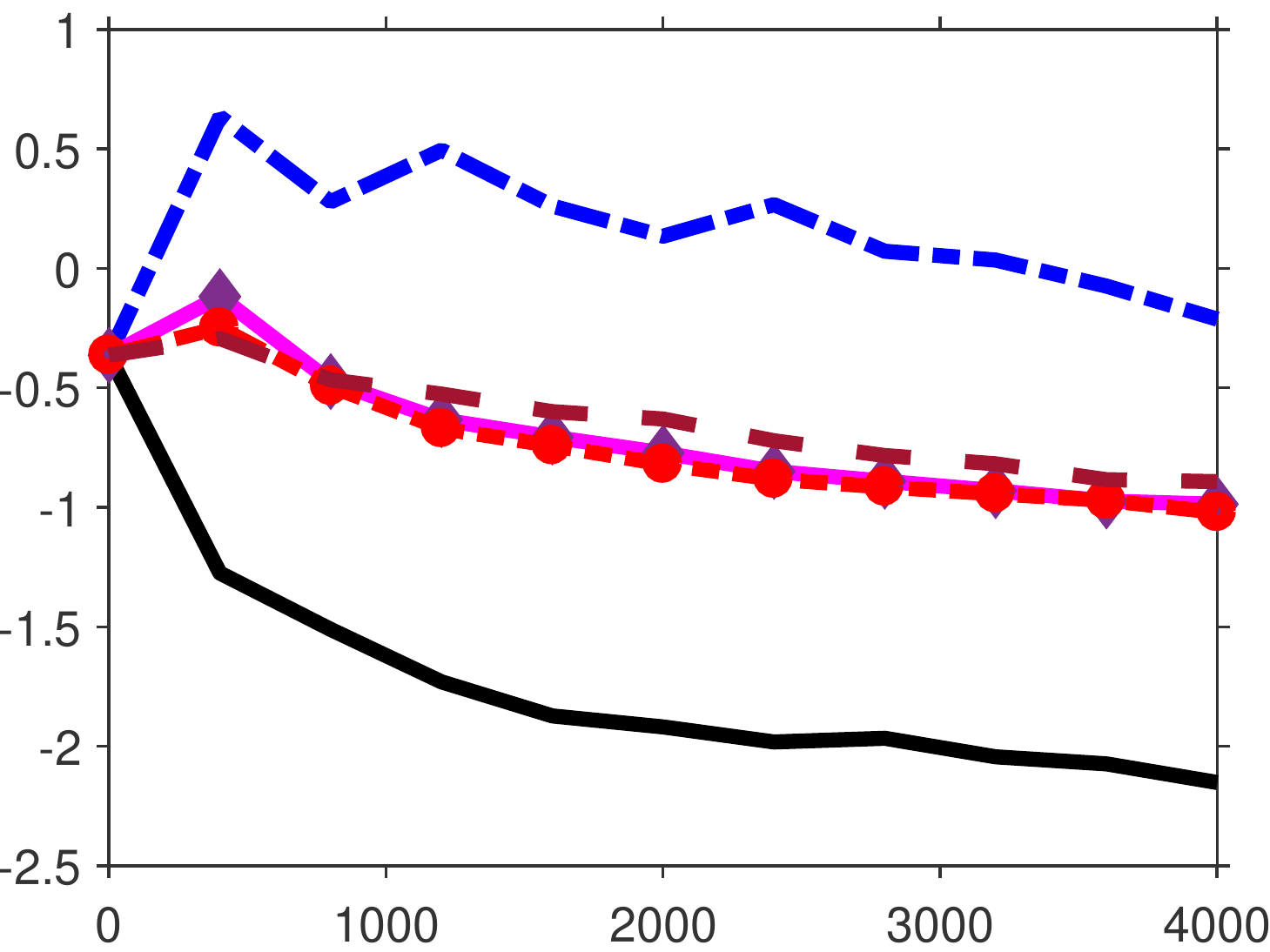}
\end{minipage}
\\
(4,2)
&
\begin{minipage}{.29\textwidth}
\includegraphics[scale=.3, angle=0]{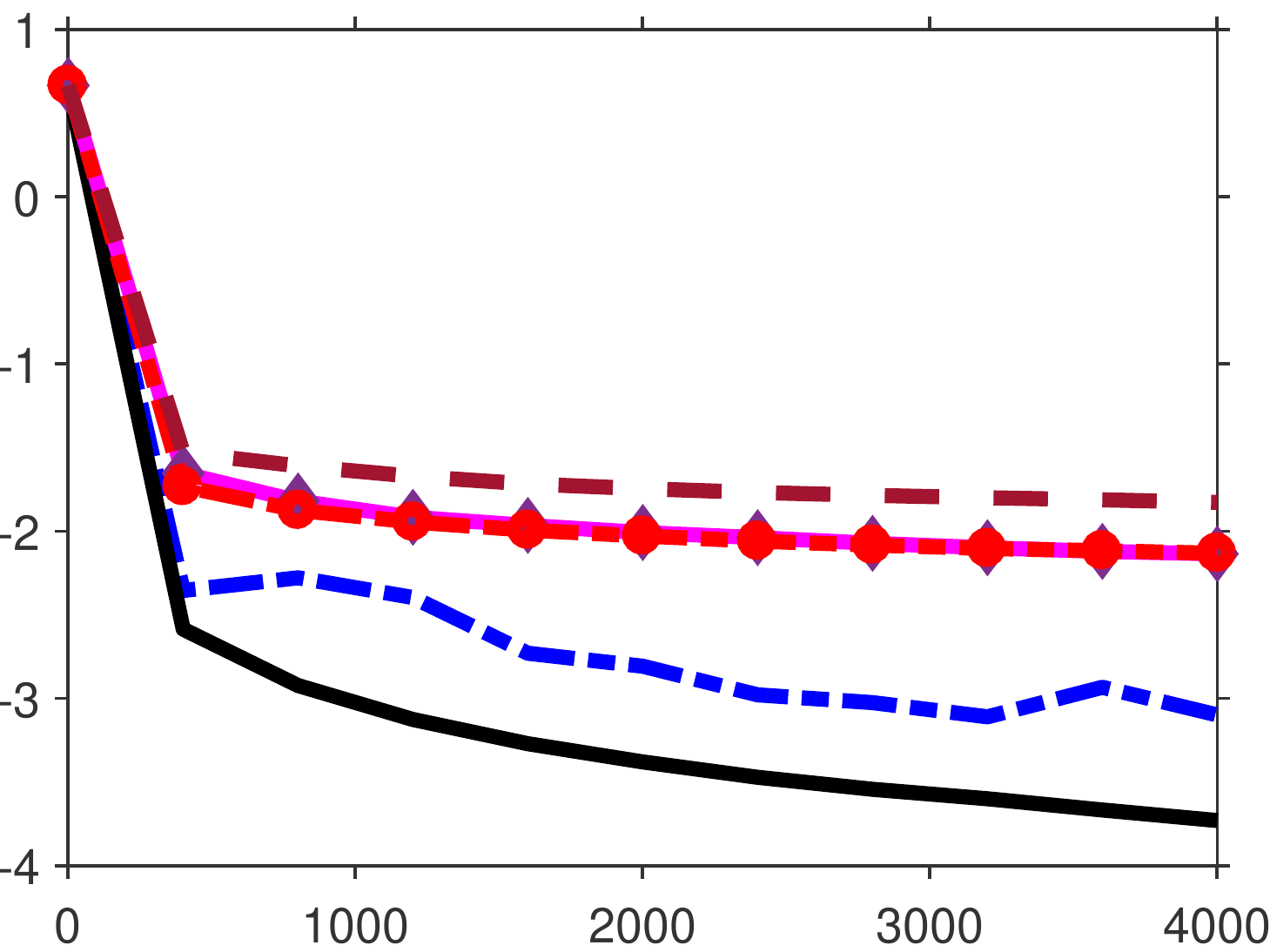}
\end{minipage}
&
\begin{minipage}{.29\textwidth}
\includegraphics[scale=.3, angle=0]{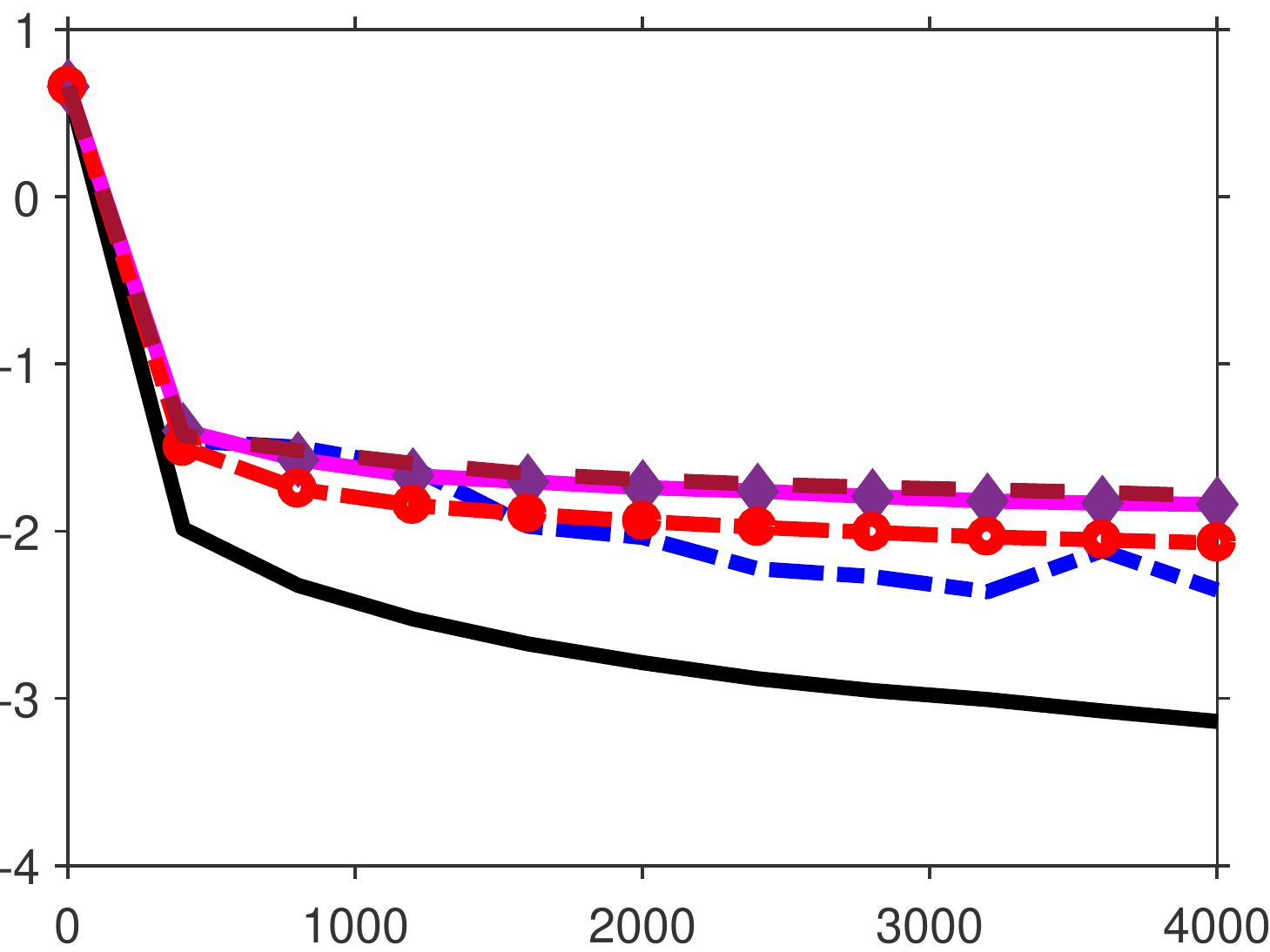}
\end{minipage}
&
\begin{minipage}{.29\textwidth}
\includegraphics[scale=.3, angle=0]{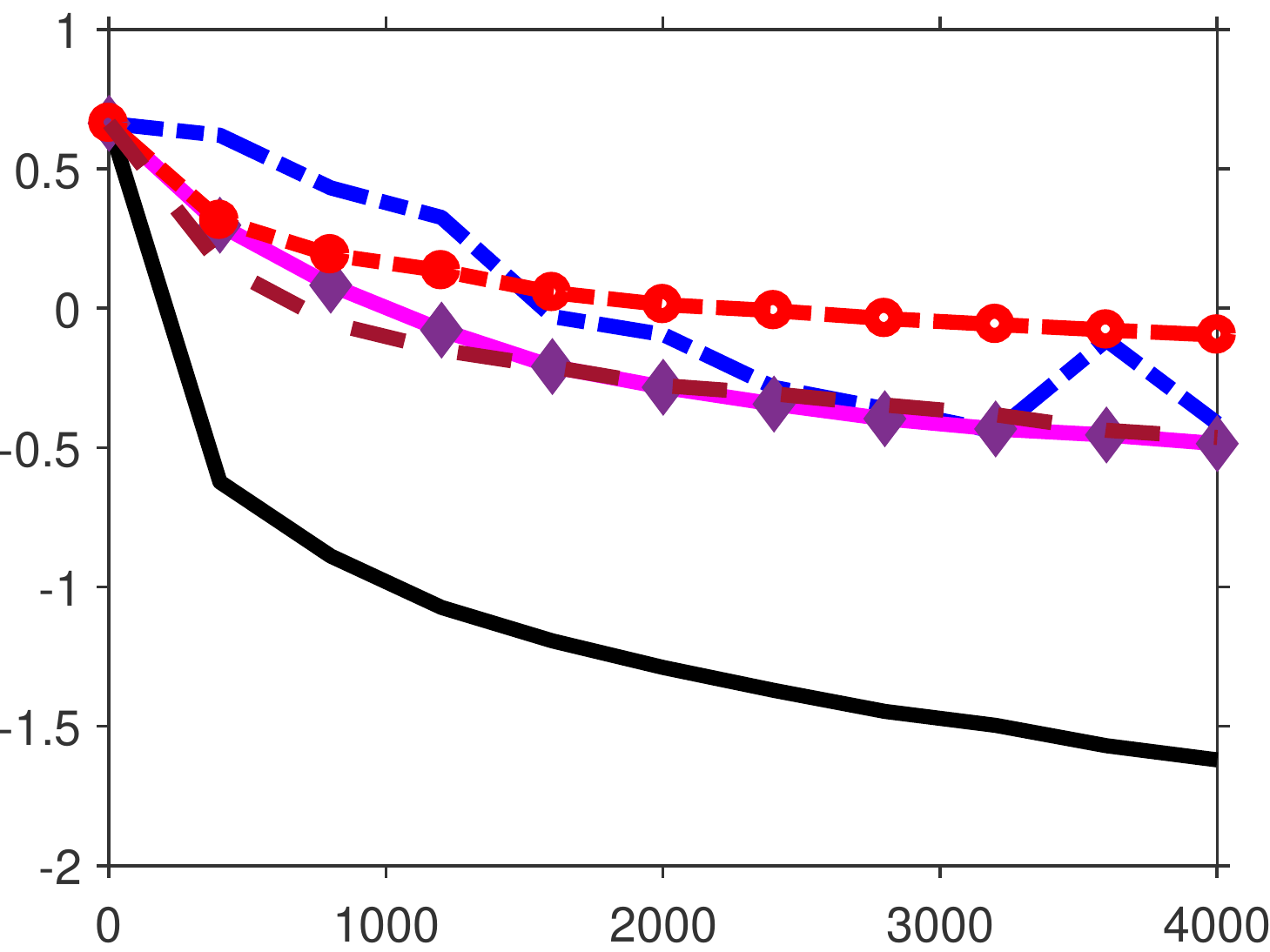}
\end{minipage}
\\
(4,4)
&
\begin{minipage}{.29\textwidth}
\includegraphics[scale=.3, angle=0]{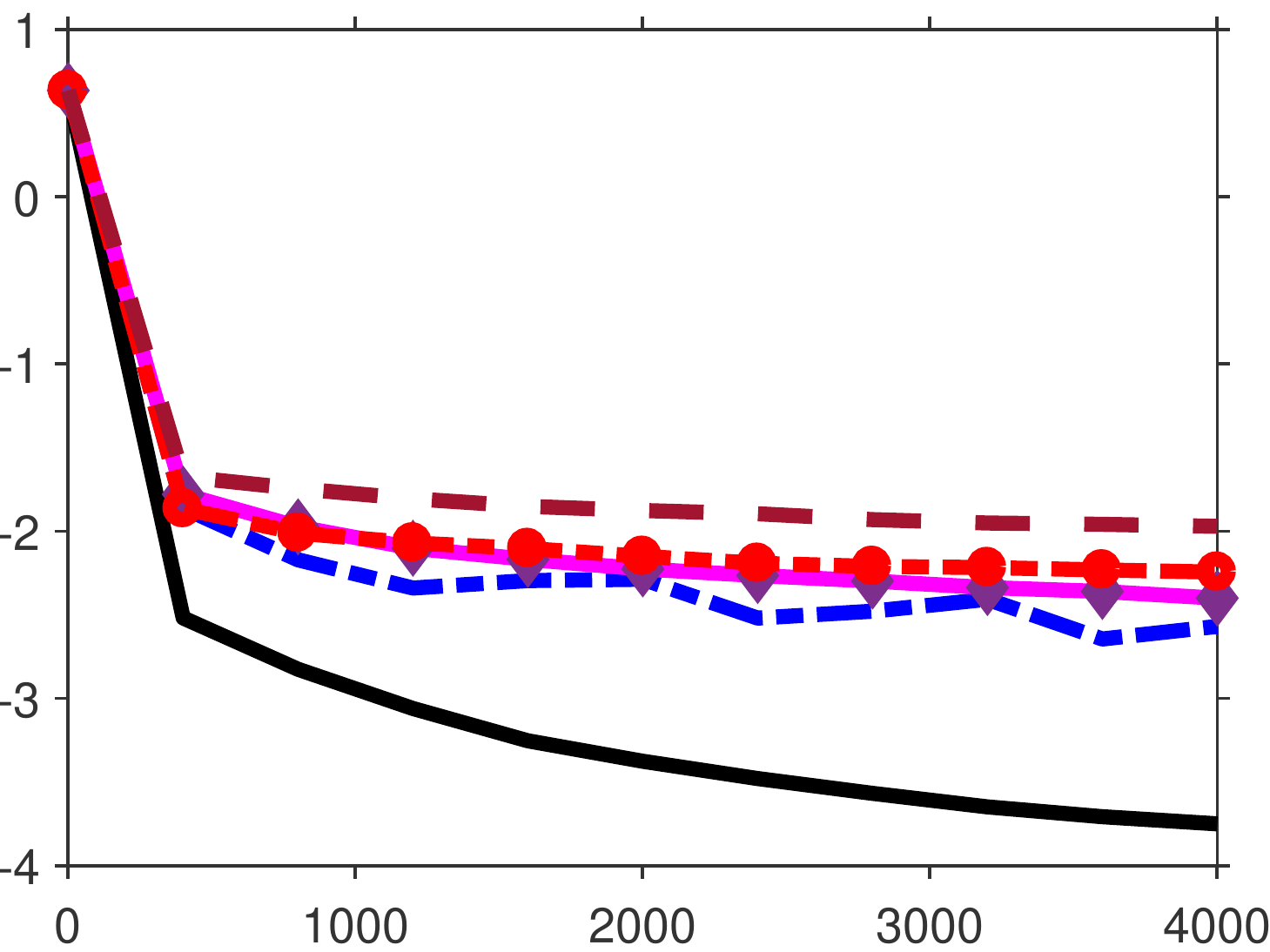}
\end{minipage}
&
\begin{minipage}{.29\textwidth}
\includegraphics[scale=.3, angle=0]{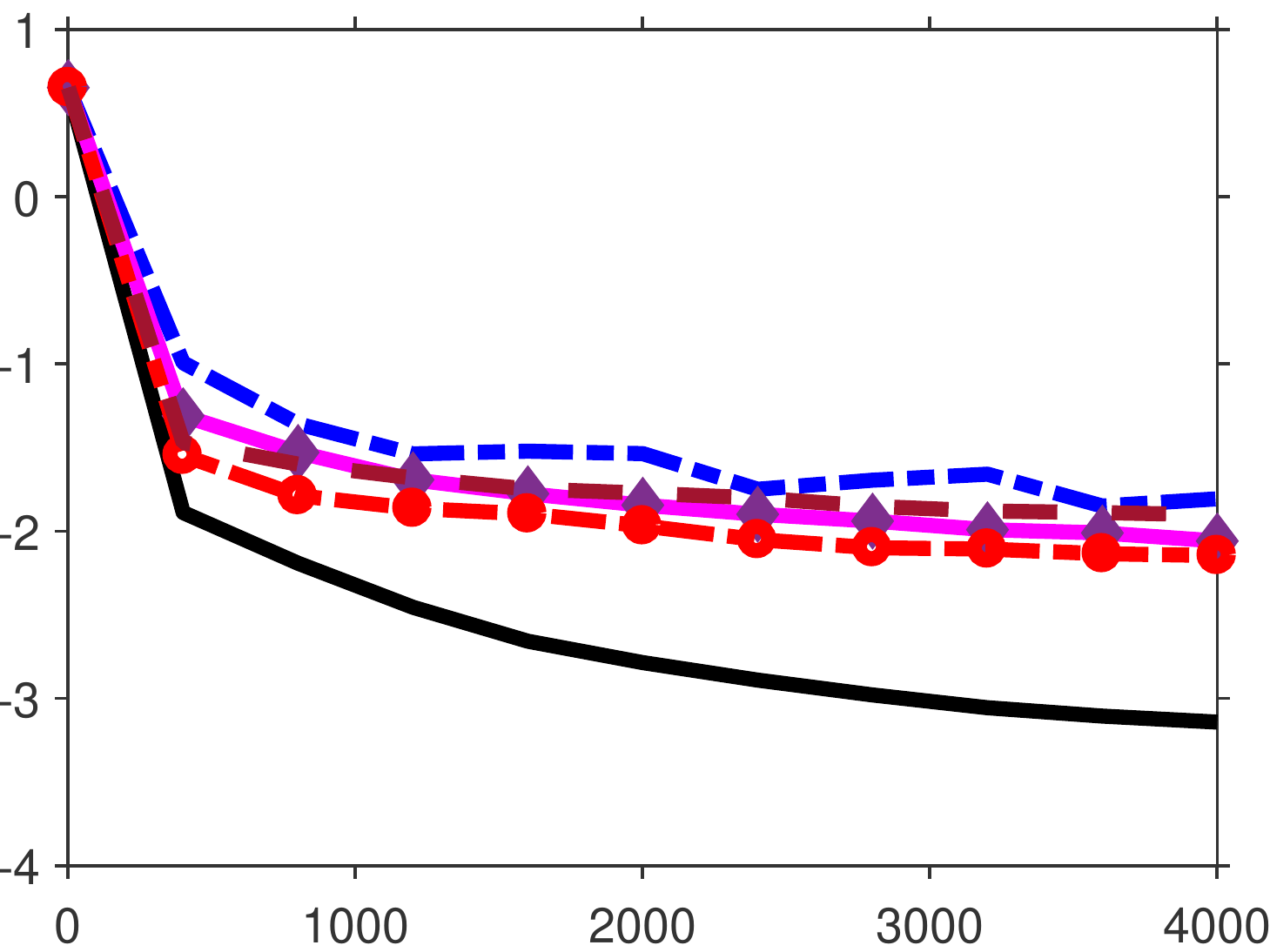}
\end{minipage}
&
\begin{minipage}{.29\textwidth}
\includegraphics[scale=.3, angle=0]{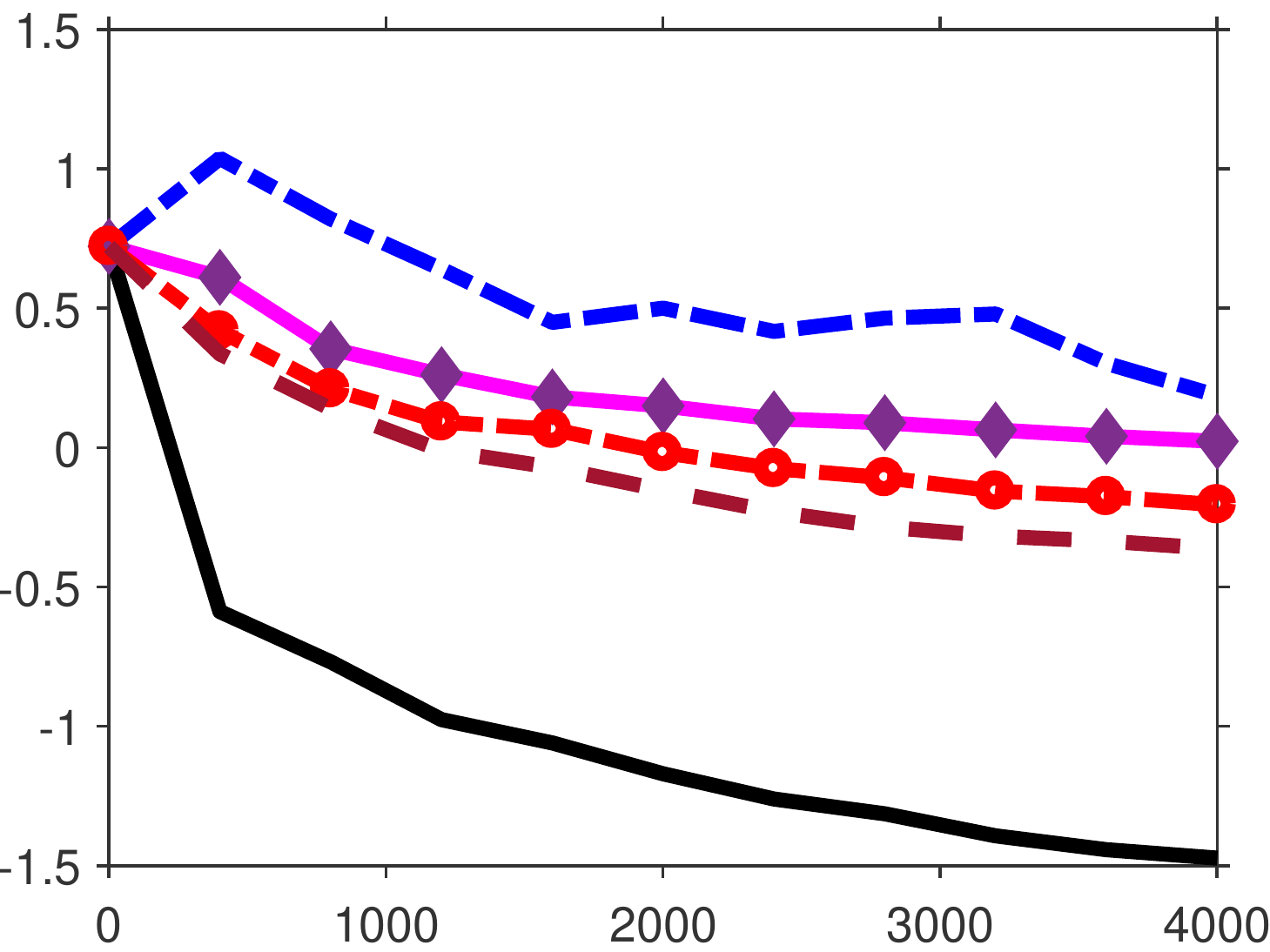}
\end{minipage}
\end{tabular}
\vspace{.1in}
\captionof{figure}{Comparison of M-SMD, A-M-SMD and MEL w.r.t. problem size ($n,m$), uncertainty ($\sigma$), and regularization parameter ($\lambda$) for 4000 iterations}
\label{fig:fiveplots}
\end{table}

\subsection{Matrix Exponential Learning} 
\cite{mertikopoulos2017distributed} proved the convergence of matrix exponential learning (MEL) algorithm under strong stability of mapping $F$ assumption while, in practice, this assumption might not hold for the games and VIs. We proved the convergence of A-M-SMD without assuming strong stability. For comparison purposes, we need to regularize the mapping $F$ by adding the gradient of a strongly convex function to it. Doing so, we obtain a strongly stable mapping (\cite{facchinei2007finite}, Chapter 2).
Let $\Vert A \Vert_F$ denote the Frobenius norm of a matrix $ A $ which is defined as $\Vert A \Vert_F=\sqrt{\tr{A^TA}}=\sqrt{\sum_{u}\sum_{v} |[A]_{uv}|^2}$ (\cite{golub2012matrix}). In the following Lemma, we show that the function $\frac{1}{2}\Vert  A \Vert_F^2$ is strongly convex. 
\begin{lemma} \label{lem:strong2}
The function $h( A )=\frac{1}{2}\Vert  A \Vert_F^2$ is strongly convex with parameter 1, i.e.,
\begin{align}\label{eq:strong2}
	\frac{1}{2}\Vert  B \Vert_F^2\geq \frac{1}{2}\Vert  A \Vert_F^2 +\tr{\nabla^T_{ A } h( A ) ( B - A )}+\frac{1}{2}\Vert  A - B \Vert_F^2.
\end{align}
\end{lemma}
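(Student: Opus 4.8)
The plan is to observe that $h(A)=\frac{1}{2}\|A\|_F^2=\frac{1}{2}\tr{A^TA}$ is a pure quadratic form, so its second-order Taylor expansion around any point $A$ is \emph{exact}; consequently the claimed inequality \eqref{eq:strong2} will in fact hold with equality, and the modulus $1$ will be tight. First I would compute the gradient. Writing $h(A)=\frac{1}{2}\sum_{u}\sum_{v}|[A]_{uv}|^2$ and differentiating entrywise gives $\nabla_A h(A)=A$, so that $\tr{\nabla^T_A h(A)(B-A)}=\tr{A^T(B-A)}$.

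The core step is then a direct expansion of the right-hand side of \eqref{eq:strong2}. Using $\|M\|_F^2=\tr{M^TM}$, I would write $\frac{1}{2}\|A-B\|_F^2=\frac{1}{2}\tr{(A-B)^T(A-B)}=\frac{1}{2}\left(\tr{A^TA}-2\tr{A^TB}+\tr{B^TB}\right)$, where I invoke the symmetry of the Frobenius inner product $\tr{A^TB}=\tr{B^TA}$ (which holds because $\tr{C}=\tr{C^T}$ for any square $C$). Adding the three pieces $\frac{1}{2}\tr{A^TA}$, $\tr{A^TB}-\tr{A^TA}$, and $\frac{1}{2}\tr{A^TA}-\tr{A^TB}+\frac{1}{2}\tr{B^TB}$, the $\tr{A^TA}$ contributions cancel as $\frac{1}{2}-1+\frac{1}{2}=0$, the cross terms cancel as $1-1=0$, and only $\frac{1}{2}\tr{B^TB}=\frac{1}{2}\|B\|_F^2$ survives. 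This equals the left-hand side exactly, proving \eqref{eq:strong2}.

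There is genuinely no inequality slack here, since $h$ is quadratic with Hessian equal to the identity, so I do not expect any real obstacle; the only points requiring minor care are the componentwise differentiation establishing $\nabla_A h(A)=A$ and the bookkeeping of the cross terms via $\tr{A^TB}=\tr{B^TA}$. I would close by noting that the resulting equality in \eqref{eq:strong2} is precisely the assertion that $h$ is strongly convex with parameter $1$, completing the proof.
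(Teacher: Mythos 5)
Your proposal is correct and takes essentially the same route as the paper's own proof: both compute $\nabla_A h(A)=A$, rewrite all three terms on the right-hand side of \eqref{eq:strong2} as traces, and observe that the $\tr{A^TA}$ and cross terms cancel so that the inequality in fact holds with equality. The only cosmetic difference is that you justify the gradient by entrywise differentiation (and note $\tr{A^TB}=\tr{B^TA}$ explicitly), whereas the paper cites a matrix-calculus reference for the same fact.
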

The proof of Lemma \ref{lem:strong2} can be found in Appendix.

Note that $\nabla \frac{\lambda}{2} \Vert X  \Vert_F^2=\lambda X $. Therefore, to regularize the mapping $F$, we need to add the term $\lambda X $ to it and consequently, the mapping $F'=F+\lambda X $ is different from the original $F$. It should be noted for small values of $\lambda$, the algorithm converges very slowly. On the other hand, the solution which is obtained by using large values of $\lambda$ may be far from the solution to the original problem. Hence, we need to find a reasonable value of $\lambda$. For this reason, we tried three different values including $0.1, 0.5, 1$. Note that the difference between MEL and M-SMD algorithm is adding the term $\lambda X $ to the mapping $F$.

For each experiment, the algorithm is run for $T=4000$ iterations. We apply the well-known harmonic stepsize $\eta_t=\frac{1}{\sqrt{t}}$ for A-M-SMD and M-SMD, and harmonic stepsize $\eta_t=\frac{1}{t}$ for MEL. 
Figure \ref{fig:fiveplots} demonstrates the performance of A-M-SMD, M-SMD and MEL algorithms in terms of logarithm of expected value of gap function \eqref{gap3}. The expectation is taken over $ Z _t$, we repeat the algorithm for $10$ sample paths and obtain the average of gap function. In these plots, the blue (dash-dot) and black (solid) curves correspond to the M-SMD and A-M-SMD algorithms, respectively, the magenta (solid diamond), red (circle dashed) and brown (dashed) curves display MEL algorithm with $\lambda=0.1, 0.5$ and $1$.
As can be seen in Figure \ref{fig:fiveplots}, A-M-SMD algorithm outperforms the M-SMD and MEL algorithms in all experiments. It is evident that MEL algorithm converge slowly but faster than M-SMD. Comparing three versions of MEL algorithm which apply large, moderate or small value of regularization parameter $\lambda$, it can be seen that MEL is not robust w.r.t this parameter.
\section{Concluding Remarks} 
\label{sec:conclusion}
We consider multi-agent optimization problems on semidefinite matrix spaces. We develop mirror descent methods where we choose the distance generating function to be defined as the quantum entropy. These first-order single-loop methods include a mirror descent incremental subgradient (M-MDIS) method for minimizing a convex function that consists of sum of component functions and an averaging matrix stochastic mirror descent (A-M-SMD) method for solving Cartesian stochastic variational inequality problems under monotonicity assumption of the mapping. 
We show that the iterate generated by M-MDIS algorithm converges asymptotically to the optimal solution and derive a non-asymptotic
convergence rate. We also prove that A-M-SMD method converges to a weak solution of the CSVI with rate of ${\cal O}(1/\sqrt{t})$. Our numerical experiments performed on a wireless communication network display that the A-M-SMD method is significantly robust w.r.t. the problem size and uncertainty. 
\newpage
\bibliographystyle{ijocv081}
\bibliography{referenceIEEE} 
\section{Appendix}
\ks{We make use of the following lemma in some proofs.} 
\begin{lemma}\label{lemma:change}
Let $[X]_{uv}$ denotes the elements of matrix $X$. If we rewrite matrices $X$, $Z$ and $\nabla_{{X}}f({X})$ as vectors $x=\left([X]_{11},\ldots,[X]_{nn}\right)^T$, $z=\left([z]_{11},\ldots,[z]_{nn}\right)^T$, and $\nabla f({x})= \left([\nabla_{{  X}}f({  X})]_{11}, \ldots, [\nabla_{{X}}f({X})]_{nn}\right)^T$ respectively, it is trivial that
\begin{align*}
	(z-{x})^T\nabla f({x})= \sum_{u}\sum_{v}[(Z-X)]_{uv}[\nabla_{X} f(X)]_{uv}=\tr{(Z-{X})^T\nabla_{{X}}f({X})},
\end{align*}
where the last inequality follows by relation $\tr{  A^T   B}=\sum_{u}\sum_{v}[{  A} ]_{uv}[  B]_{uv}$.
\end{lemma}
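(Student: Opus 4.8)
The plan is to reduce the entire statement to the single entry-wise identity $\tr{A^T B}=\sum_u\sum_v [A]_{uv}[B]_{uv}$ and then specialize it to $A=Z-X$ and $B=\nabla_X f(X)$. First I would verify this trace identity by direct expansion. Writing out a diagonal entry of the product, $[A^T B]_{uu}=\sum_v [A^T]_{uv}[B]_{vu}=\sum_v [A]_{vu}[B]_{vu}$, and summing over the diagonal gives $\tr{A^T B}=\sum_u[A^T B]_{uu}=\sum_u\sum_v[A]_{vu}[B]_{vu}$. After relabeling the dummy indices, this is exactly the double sum over all $n^2$ pairs of corresponding entries of the two matrices, i.e. $\sum_u\sum_v[A]_{uv}[B]_{uv}$.

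Next I would interpret the vectorizations appearing in the statement. By construction the vectors $x$, $z$, and $\nabla f(x)$ list the entries of $X$, $Z$, and $\nabla_X f(X)$ in one common fixed ordering. Consequently the Euclidean inner product $(z-x)^T\nabla f(x)$ is merely the sum of products of corresponding coordinates, which is $\sum_u\sum_v [Z-X]_{uv}\,[\nabla_X f(X)]_{uv}$. This establishes the first equality in the claim without any further work.

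Finally, I would invoke the trace identity from the first step with the choices $A=Z-X$ and $B=\nabla_X f(X)$, obtaining $\sum_u\sum_v[Z-X]_{uv}\,[\nabla_X f(X)]_{uv}=\tr{(Z-X)^T\nabla_X f(X)}$, which is the second equality and completes the argument.

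As the statement itself indicates, there is essentially no obstacle here: the result is definitional, being nothing more than the fact that the Frobenius inner product coincides with the ordinary dot product of the vectorized matrices. The only point requiring minor care is the index bookkeeping when passing through the transpose, since $[A^T]_{uv}=[A]_{vu}$ interchanges the row and column indices; keeping the identity stated for a general matrix $A$ rather than immediately using symmetry makes this step transparent.
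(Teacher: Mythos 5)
Your proposal is correct and follows essentially the same route as the paper, which treats the lemma as definitional and justifies it by the same identity $\tr{A^TB}=\sum_{u}\sum_{v}[A]_{uv}[B]_{uv}$ applied to $A=Z-X$ and $B=\nabla_X f(X)$. The only difference is that you explicitly verify that trace identity by expanding the diagonal entries of $A^TB$ and relabeling indices, a step the paper takes for granted; this is a harmless (and slightly more self-contained) elaboration rather than a different approach.
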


\textit{Proof of Lemma \ref{lemma:optimality}:}

($\Rightarrow$) Assume $\widetilde{  X}^*$ is optimal to Problem \eqref{eqn:problem2}. Assume by contradiction, there exists some $\hat{  Z} \in \mathcal B$ such that $\tr{(\hat{  Z}-\widetilde {  X}^*)^T\nabla_{\widetilde{  X}}f(\widetilde{  X}^*)} < 0$. Since $f$ is continuously differentiable, by the first-order Taylor expansion, for all sufficiently small $0<\alpha<1$, we have 
\begin{align*}
f(\widetilde{  X}^*+\alpha (\hat{  Z}-\widetilde {  X}^*))= f({  X}^*) +\tr{(\hat{  Z}-\widetilde {  X}^*)^T \nabla_{\widetilde{  X}}f(\widetilde{  X}^*)}+ o(\alpha) < f({  X}^*),
\end{align*}
following the hypothesis $\tr{(\hat{  Z}-\widetilde {  X}^*)^T\nabla_{\widetilde{  X}}f(\widetilde{  X}^*)} < 0$. Since $\mathcal B$ is convex and $  X^*,~\hat{  Z} \in \mathcal B$, we have $\widetilde{  X}^*+\alpha (\hat{  Z}-\widetilde {  X}^*) \in \mathcal B$ with smaller objective function value than the optimal matrix $\widetilde{  X}^*$. This is a contradiction. Therefore, we must have $\tr{(  Z-\widetilde {  X}^*)^T \nabla_{\widetilde{  X}} f(\widetilde{  X}^*)} \geq 0$ for all $  Z\in \mathcal B$. 
\\($\Leftarrow$) Now suppose that  $\widetilde{  X}^* \in \mathcal B$ and $\tr{(  Z-\widetilde {  X}^*)^T \nabla_{\widetilde{  X}} f(\widetilde{  X}^*)} \geq 0$ for all $   Z\in \mathcal B$. Since $f$ is convex and by Lemma \ref{lemma:change}, we have 
\begin{align*}
	f(\widetilde {  X}^*) +\tr{(  Z-\widetilde {  X}^*)^T \nabla_{\widetilde{  X}} f(\widetilde{  X}^*)} \leq f(  Z) ,\quad \text{for all} \quad   Z\in \mathcal B,
\end{align*}
which implies for all $Z\in \mathcal B$, 
\begin{align*}
 f(  Z)-f(\widetilde {  X}^*) \geq 	\tr{(  Z-\widetilde {  X}^*)^T \nabla_{\widetilde{  X}} f(\widetilde{  X}^*)} \geq 0,
\end{align*}
where the last inequality follows by the hypothesis. Since $\widetilde {  X}^* \in \mathcal B$, it follows that $\widetilde {  X}^*$ is optimal. 

\textit{Proof of Lemma \ref{lemma:average}:}

We use induction to prove \eqref{eq:avg}. It is trivial that it holds for $t=0$, since $\overline{  X}_{i,0}={  X}_{i,0}$. Assume \eqref{eq:avg} holds for $t$. From \eqref{eq:gamma}, $\Gamma_t=\sum_{k'=0}^{t}\eta_{k'}$ which results in $\overline{  X}_{i,t}=\frac{\sum_{k=0}^{t} \eta_k  X_{i,k}}{  \Gamma_t}$. From \eqref{eq:gamma}, we have 
\begin{align*}
	&\overline{  X}_{i,t+1}:=\frac{\Gamma_t\overline{  X}_{i,t}+\eta_{t+1}{  X}_{i,t+1}}{ \Gamma_{t+1}}=\frac{\sum_{k=0}^{t} \eta_k  X_{i,k}+\eta_{t+1}{  X}_{i,t+1}}{\Gamma_{t+1}}=\frac{\sum_{k=0}^{t+1} \eta_k  X_{i,k}}{\sum_{k'=0}^{t+1} \eta_k'}. 
\end{align*}

\textit{Proof of Lemma \ref{pre:smoothstrong}:}

Using the Fenchel coupling definition,
\begin{align}
\label{eq:pre2-2}
	H(  {X},  {Y}+  {Z}) = \omega(  {X})+\omega^*(  {Y}+  Z)-\tr{  {X^T}(  {Y}+  Z)}.
\end{align}
By strong convexity of $\omega$ w.r.t. trace norm (Lemma \ref{lm:strong}) and using duality between strong convexity and strong smoothness \cite{kakade2009duality}, 
$\omega^*$ is 1-strongly smooth w.r.t. the spectral norm, i.e., $\omega^*(  {Y}+  Z) \leq \omega^*(  {Y})+\tr{  Z^T \nabla \omega^*(  {Y})}+\Vert   Z\Vert^2_2.$
By plugging this inequality into \eqref{eq:pre2-2} we have
	\begin{align*}
H(  {X},  {Y}+  {Z}) &\leq \omega(  {X}) + \omega^*(  {Y})+ \tr{  Z^T \nabla \omega^*(  {Y})}+\Vert   Z\Vert^2_2-\tr{  {X^T}{Y}}-\tr{  {X^T}{Z}}\\&=H(  {X},  {Y})+\tr{  Z^T(\nabla \omega^*(  {Y})-  {X})}+\Vert   Z\Vert^2_2,
	\end{align*}
where in the last relation, we used \eqref{eq:fenchel}. 

\textit{Proof of Lemma \ref{lem:concaveconvex}:}

We use the following definitions in the proof. 
\begin{defn} [Matrix convex function]
\label{def:matrix} Let $\mathbb C^n$ be the complex vector space.
\item [(a)] An arbitrary matrix $ A  \in \mathbb H_m$ is nonnegative if $ (A y)^\dagger y \geq 0$ for all $y \in\mathbb C^n$.  
\item [(b)] For $ A ,  B  \in \mathbb H_m$ we write $ A \geq  B $ if $ A - B $ is nonnegative. 
\item [(c)] A function $f:\mathbb H_m\rightarrow \mathbb H_n$ is convex if $f(\lambda  A + (1-\lambda)  B )\leq \lambda f( A )+(1-\lambda)f( B )$, for all $0\leq \lambda \leq 1$. 
\item [(d)] A function $f:\mathbb H_m\rightarrow \mathbb H_n$ is called matrix monotone increasing if $ A \geq  B $ implies $f( A )\geq f( B )$ (\cite{watkins1974convex}).
 \item [(e)] A function $f:\mathbb H_m\rightarrow \mathbb R$ is called matrix monotone increasing if $ A \geq  B $ implies $f( A )\geq f( B )$ (\cite{kwong1989some}).
\end{defn}
\begin{proof}
Assume that $X ,  Z  \in \mathbb H_m$, and $0 \leq \lambda\leq 1$. By convexity of $\mathbb H_m$, we have $\lambda X  + (1 - \lambda) Z  \in \mathbb H_m$, and from concavity of $g$, we have
\begin{align}
	g(\lambda X + (1-\lambda)  Z )\geq \lambda g(X )+(1-\lambda)g( Z ).
\end{align}
 Since $h$ is matrix monotone increasing and by Definition \ref{def:matrix}(e), we get
\begin{align}
		h\left(g(\lambda X + (1-\lambda)  Z )\right)\geq h\left(\lambda g(X )+(1-\lambda)g( Z )\right) \geq \lambda h( g(X ))+(1-\lambda) h (g( Z )),
\end{align}
where the last inequality follows from concavity of $h$. Therefore,
\begin{align}
	h\left(g(\lambda X + (1-\lambda)  Z )\right) \geq \lambda h( g(X ))+(1-\lambda) h (g( Z )),
\end{align}
and we conclude that $f$ is a concave function. 
\end{proof}

\textit{Proof of Lemma \ref{lem:gradientmonotone}:}

By convexity of $f$ and by Lemma \ref{lemma:change}, we have for arbitrary $X , Z \in \mathcal X$
\begin{align*}
	f( { Z }) +\tr{(X - { Z })^T\nabla_{{ Z }} f({ Z })} \leq f(X ) .
\end{align*}
By choosing the points in reverse, we also have 
\begin{align*}
f( {X }) +\tr{( Z - {X })^T \nabla_{{X }}f({X })} \leq f( Z ).
\end{align*}
Summing the above inequalities, we get
\begin{align*}
	f( Z )+f({X })+\tr{(X -{ Z })^T\nabla_{{ Z }} f({ Z })}+\tr{( Z - {X })^T\nabla_{{X }} f({X })}\leq f(X )+f( Z ),
\end{align*}
and using the fact that $\tr{ A + B }=\tr{ A }+\tr{ B }$, we get the desired result.

\textit{Proof of Lemma \ref{lm:gap properties}:}

$(i)$ For an arbitrary $X  \in \mathcal X$, we have
\begin{align*}
		Gap( X )= \underset { Z  \in \mathcal X} {\sup}\ \tr{( X - Z )^TF(X )} \geq \tr{( X - A )^T F(X )}, \quad \text{for all}~  A  \in  \mathcal X.
\end{align*}
For $ A = X $, the above inequality suggests that $Gap( X )\geq \tr{( X - X )^TF(X )}=0$ implying that the function $Gap( X )$ is nonnegative for all $ X  \in  \mathcal X$. \\
$(ii)$ Assume $X ^*$ is a strong solution. By definition of VI($\mathbf{\mathcal X}, F$) and relation \eqref{eq:VI}, we have
\begin{align*}
	\tr{( {X^*}- X )^T F(X ^*)} \leq 0, \quad \text{for all}~ X  \in \mathbf{\mathcal X}
\end{align*}
which implies 
\begin{align*}
Gap( {X^*})= \underset {X  \in \mathcal X} {\sup}\ \tr{( {X^*}- X )^T F(X ^*)}	 \leq 0, \quad \text{for all}~ X  \in \mathbf{\mathcal X}.
\end{align*}
On the other hand, from Lemma \ref{lm:gap properties}$(i)$, we get $Gap( {X^*}) \geq 0$. We conclude that for any strong solution $ {X^*}$, we have $Gap( {X^*}) = 0$. 
Conversely, assume that there exist an $ X $ such that $Gap( X ) = 0$. Therefore, $\underset { Z  \in \mathcal X} {\sup}\ \tr{( X - Z )^T F(X )}=0$ which implies $\tr{( X - Z )^T F(X )} \leq 0$ for all $ Z  \in \mathcal X$. Equivalently, we get $\tr{( Z - X )^T F(X )} \geq 0$ for all $ Z  \in \mathcal X$ implying $ X $ is a strong solution.

\textit{Proof of Lemma \ref{lem:strong2}:}

For an arbitrary matrix $ A $, we have $\nabla_ A   \tr{ A ^T A }= A $ (\cite{athans1965gradient}, page 32). That being said and using the definition of Frobenius norm, we have  
\begin{align*}
&\frac{1}{2}\Vert  A \Vert_F^2 +\tr{\nabla^T_{ A } h( A ) ( B - A )}+\frac{1}{2}\Vert  A - B \Vert_F^2=
\\
	 &\frac{1}{2}\Vert  A \Vert_F^2 +\tr{{ A }^T ( B - A )}+\frac{1}{2}\tr{( A - B )^T( A - B )}=
		\\
		& \frac{1}{2}\Vert  A \Vert_F^2 +\tr{{ A }^T ( B - A )}+\frac{1}{2}\tr{ A ^T  A - B ^T A - A ^T B + B ^T B }=
		\\
		&\frac{1}{2}\Vert  A \Vert_F^2 +\tr{ A ^T  B - A ^T A }+\frac{1}{2}\tr{ A ^T  A - B ^T A - A ^T B + B ^T B }=
		\\
	& \frac{1}{2}\Vert  A \Vert_F^2 +\frac{1}{2}\tr{ A ^T  B }-\frac{1}{2}\tr{ A ^T A }-\frac{1}{2}\tr{ B ^T A }+\frac{1}{2}\tr{ B ^T B }=
	\\
	&\frac{1}{2}\Vert  A \Vert_F^2 +\frac{1}{2}\tr{ A ^T  B }-\frac{1}{2}\Vert  A \Vert_F^2-\frac{1}{2}\tr{ A ^T  B } +\frac{1}{2}\Vert  B \Vert_F^2 = \frac{1}{2}\Vert  B \Vert_F^2.
\end{align*}
Therefore, the inequality \eqref{eq:strong2} holds in equality and we conclude that $h( A )$ is strongly convex with parameter 1. 
\end{document}